\newtheorem{remark}{Remark}
\newtheorem{theorem}{Theorem}
\newtheorem{proposition}{Proposition}
\newtheorem{lemma}{Lemma}
\newtheorem{corollary}{Corollary}
\newcommand{\e}{\boldsymbol{e}_3}
\newcommand{\rmm}{\boldsymbol{r}}
\newcommand{\rmmt}{\rmm'}
\newcommand{\umm}{\boldsymbol{u}}
\newcommand{\vmm}{\boldsymbol{v}}
\newcommand{\wmm}{\boldsymbol{w}}
\newcommand{\Umm}{\boldsymbol{U}}
\newcommand{\Vmm}{\boldsymbol{V}}
\newcommand{\Wmm}{\boldsymbol{W}}
\newcommand{\Ummt}{\wt\Umm}
\newcommand{\ummt}{\wt\umm}
\newcommand{\wmmt}{\wmm_{\boldsymbol r}}
\newcommand{\ck}{\check k}
\newcommand{\cm}{\check m}
\newcommand{\cn}{\check n}
\newcommand{\tu}{\wt{\boldsymbol{u}}}
\newcommand{\leray}{\mathcal{P}}
\newcommand{\lpv}{\boldsymbol {L}_{pv}}
\newcommand{\C}{\mathbb C}
\newcommand{\V}{\boldsymbol V}
\newcommand{\Le}{\mathcal L}
\newcommand{\N}{\mathbb N}
\newcommand{\R}{\mathbb R}
\newcommand{\Z}{\mathbb Z}
\newcommand{\T}{\mathbb T^3}
\newcommand{\dive}{\nabla \cdot}
\newcommand{\Ai}{\mathcal A_i}
\newcommand{\Aii}{\mathcal A_{i+1}}
\newcommand{\rfp}{r^{+}_k}
\newcommand{\rfm}{r^{-}_k}
\newcommand{\rfmm}{r^{-}_m}
\newcommand{\cip}[2]{\left( #1 \cdot #2\right)}
\newcommand{\skm}{\sum_{k,m,n;conv}}
\newcommand{\ip}[1]{\left\langle #1 \right\rangle}
\newcommand{\ds}[1]{\left(-\Delta\right)^{#1}}
\newcommand{\dsn}[1]{\left(-\Delta_\eta\right)^{#1}}
\newcommand{\bs}{\boldsymbol}
\def\mq{{\mathfrak q}}
\def\ind{{\bs 1}}
\def\mq{{\mathsf q}}
\def\ri{{\mathrm i}}
\def\mQ{{\mathsf{Q}_{EL}}}
\def\wt{\widetilde}
\def\pD{{\mathsf D}}
\def\rd{{\mathrm d}}
\def\ri{{\mathrm i}}
\def\vn{{V_{n, \sigma_1, \sigma_2}}}
\def\bpm{\begin{pmatrix}}
\def\epm{\end{pmatrix}}
\newcommand{\cN}{{\mathcal N}}
\def\ri{{\mathrm i}}
\def\sL{{\mathsf L}}
\keywords{near resonance, rotating stratified Boussinesq  system, global well-posedness, restricted
convolution, integer point counting,  elliptic integrals.}
\subjclass[2020]{Primary 35B25, 35B34, 35A01, 86A10, 42B37; secondary 35Q30.}
\begin{document}
\title[near resonant approximation of rotating stratified Boussinesq system]{near resonant approximation of  the rotating stratified Boussinesq system  on a 3-torus}
\author{Bin Cheng}
\address{Department of Mathematics, University of
          Surrey, Guildford, GU2 7XH, United Kingdom}
   \email{b.cheng@surrey.ac.uk}
   
\author{Zisis N. Sakellaris}
\address{Department of Mathematics, University of
          Surrey, Guildford, GU2 7XH, United Kingdom}
   \email{z.sakellaris@surrey.ac.uk}
\date{}
\maketitle
\begin{abstract} Based on a novel treatment of near resonances, we introduce a new  approximation for the  rotating stratified Boussinesq system on  three-dimensional tori with  arbitrary aspect ratios.  The rotation and stratification parameters are arbitrary and not equal. We obtain global existence for the proposed  nonlinear system for  arbitrarily large initial data. This  system is sufficiently accurate, with an important feature  of coupling effects between slow and fast modes.  The  key to global existence is a sharp counting of the relevant number of nonlinear interactions. An additional regularity advantage arises from a careful examination of some mixed type interaction coefficients. In a wider context,  the significance of our near resonant approach  is a delicate balance between the inclusion of more interacting modes and the improvement of regularity properties, compared to the well-studied singular limit approach   based on exact resonance. 
\end{abstract}

\begin{section}{Introduction}
Let $\T:=[0, 2 \pi \sL_1] \times [0, 2 \pi \sL_2] \times [0,2\pi]$ be the three-dimensional flat torus with anisotropic  periods $2\pi\sL_1,2\pi\sL_2 >0$. We are interested in approximating solutions $(\Umm^\backprime,\rho)$, with $\Umm^\backprime:\T\times \R^+\to \R^3$,  $\rho:\T \times \R^+ \to \R$, to the unforced, rotating stratified Boussinesq system 
\begin{equation}\label{bous0}
\begin{cases}
&\partial_t \Umm^\backprime + \Umm^\backprime\cdot \nabla\Umm^\backprime- \nu_1 \Delta \Umm^\backprime + \Omega  \e \times \Umm^\backprime - N \rho \e = -\nabla p\\
& \partial_t \rho +\Umm^\backprime\cdot\nabla\rho - \nu_2 \Delta \rho + N(\Umm^\backprime\cdot \e)= 0\\
& \dive \Umm^\backprime=0,
\end{cases}
\end{equation}
with zero-mean initial data $(\Umm^\backprime_0,\rho_0)\in H^{\ell} (\T;\R^4)$,  $\ell\ge 1$ and  $\nabla\cdot \Umm^\backprime_0=0$.  Here $\e=(0,0,1)^\intercal$ { and $H^{\ell} $ is the Sobolev space of  order $\ell$ on  $\T$}.

System \eqref{bous0}  is a well-known model of geophysical fluid dynamics (GFD). It describes the  nonlinear dynamics of an incompressible fluid with  velocity $\Umm^\backprime$ and density deviation $\rho$ from a linear background state,  with $p$ standing for the pressure.  In this context, the influence of  rotation is measured  via $\Omega>0$ and that of stratification via the Brunt-V\"ais\"al\"a frequency $N>0$. The positive parameters $\nu_1, \nu_2$ represent the viscosity and heat conductivity, respectively, { following the mathematical GFD conventions, see e.g. \cite[Chapter 1]{MAJ} for more details.}  Note that we do not require largeness or smallness conditions on these parameters. The relative strength of the effects of rotation and stratification is measured via $\eta= \frac \Omega N$  satisfying $\eta\ne1$.  In the case  $N=\rho=0$, the Boussinesq system  reduces to the incompressible rotating Navier-Stokes equations. 

The pressure term can be removed from \eqref{bous0} using the  projection $\leray = \begin{pmatrix}
\leray^\backprime&0\\
0&1
\end{pmatrix},$
where $\leray^\backprime$ is the ordinary Leray projection to divergence-free fields in three dimensions. Then, using a  four-component field $\Umm := ( \Umm^\backprime, \rho)$, 
 we obtain the system
\begin{equation}\label{boussi0}
\partial_t \Umm + \leray (\Umm^\backprime\cdot \nabla \Umm) -  \boldsymbol{\nu}  \leray    \Delta \leray  \Umm = N \Le \Umm \quad\text{with}\quad \nabla\cdot \Umm=0\quad\text{and}\quad \Umm_0=(\Umm^\backprime_0,\rho_0),
\end{equation}
where 
\begin{equation*}\Le =\leray \begin{pmatrix}
\eta J & 0\\
0 & J
\end{pmatrix}\leray,\quad   \boldsymbol{\nu} =\begin{pmatrix}
\nu_1  1_{3\times 3} & 0\\
0 & \nu_2 
\end{pmatrix},\quad  J = \begin{pmatrix}
0&1\\
-1&0
\end{pmatrix},
\end{equation*}
and $\Umm_0\in  H^{\ell} (\T;\R^4)$, $\ell\ge 1$, is divergence-free.  Here and below, a 4-vector field is said to be divergence-free if its first 3 components,  i.e. the velocity field, are divergence-free.  As the linear operator  $N\Le$ in the PDE system is  responsible for inertia-gravity waves,  we can use the associated linear  evolution operator $e^{N t \Le}$    to   transform our system  via setting
  $\umm =e^{- N t \Le} \Umm$ and $B(\Umm,\Umm) =  \leray(\Umm^\backprime\cdot\nabla\Umm)$.  Then an equivalent system  to \eqref{boussi0} can be obtained:
\begin{equation}\label{boussmod0}
\partial_t \umm + B(Nt,\umm, \umm) + A \umm =0 \quad\text{with}\quad \nabla\cdot \umm=0 \quad\text{and}\quad \umm_0=\Umm_0,
\end{equation}
where 
\[ B(Nt,\umm,\umm) = e^{- N t \Le} B(e^{N t \Le}\umm,e^{N t \Le}\umm)\quad\text{and}\quad A  = -e^{- N t \Le} \leray\boldsymbol{\nu} \Delta e^{N t \Le}  \umm.\] In this so-called modulated form,  nonlinear effects are expressed through the transformed bilinearity $ B(Nt,\cdot,\cdot)$. This filtering technique has been widely used as a method to tackle  problems for oscillatory perturbations to evolution equations,  see e.g. \cite{SCHO}.

 Since the near resonance  concept that we study can be  regarded as  a finite version of the well  known exact resonance, and since both notions are based on nonlinear interactions, we begin with explicating the nonlinear effects in systems \eqref{boussi0} and  \eqref{boussmod0}  using Fourier series.  For a zero-mean field $\Umm\in L^2(\T;\C^4)$, we  expand $\Umm(x)= \sum_{k\in(\Z^3\setminus\{\vec 0\})}  e^{\ri \ck\cdot x} U_k$  where
\[
\ck=(k_1/\sL_1, k_2/\sL_2,k_3)^\intercal
\]
and  $U_k$  are the Fourier coefficients of $\Umm$. As illustrated in Section \ref{preliminariesS}, the symbol of $\Le $ is given by a $4 \times 4$ anti-Hermitian matrix.  Its spectrum consists of 0  with multiplicity two, and 
\[
\pm \ri\omega_k = \pm  \ri\frac{\sqrt{\ck_1^2 + \ck_2^2 + \eta^2 k_3^2}} {|\ck|}\quad\text{for}\quad k=(k_1,k_2,k_3)^\intercal \in \Z^3\setminus \{\vec 0 \},
\]
see e.g. \cite{EM1}. Moreover, there exist corresponding eigenvectors  $\{r_k^{00},  r_k^{0}, r_k^+,r_k^-\}$ that form an orthonormal basis of $\C^4$.  Note that
all our study does not involve  the subspace  spanned by $e^{\ri \ck\cdot x}r_k^{00}$, which   on the physical side consists of any 4-vector field of which the velocity component is a  potential flow and the density component vanishes. Then  define projections
 \[\leray_k^{\sigma}\Umm(x) =  e^{\ri \ck\cdot x} \cip {U_k}{\overline {r_k^{\sigma}}}r_k^{\sigma}\quad\text{for  }\sigma \in \{0,\pm 1 \}:= \{0, \pm \},\] that mutually cancel each other, and express the bilinearity in \eqref{boussi0} as a convolution sum
\begin{equation}\label{convolutionsign}
B(\Umm,\Vmm) = \sum_{\substack{k,m,n\in\Z^3\setminus\{\vec 0\}\\ k+m+n=\vec 0}}\sum_{\sigma_1,\sigma_2,\sigma_3 \in \{0,\pm \}} \leray_{-n}^{-\sigma_3} B(\leray_k^{\sigma_1} \Umm,\leray_m^{\sigma_2}\Vmm),
\end{equation}
where the arguments $\Umm,\Vmm$ are divergence-free with zero mean.

 Quantities related to $r_k^0$ are customarily labelled slow, whereas those  related to $r_k^{\pm}$ fast.  Then, for any  weakly divergence-free vector field  $\Umm\in L^2 (\T;\C^4)$, we can define slow and fast projections as follows:
\[
\Umm_s = \sum_{k\in\Z^3} \leray_k^0 \Umm\quad\text{and}\quad \Umm_f =\Umm - \Umm_s.\]
The slow part $\Umm_s$ is in the kernel of $\Le$, whereas the fast part $\Umm_f$ is in the orthogonal complement of the kernel of $\Le$ with the orthogonality in terms of the $L^2(\T;\C^4)$ inner product.

The eigenexpansion formalism can also be used for the modulated bilinearity of \eqref{boussmod0}.
For divergence-free and zero-mean vector fields $\Umm, \Vmm$, the expansion is:
\begin{equation}\label{mod0}
B(\tau,\Umm,\Vmm)=\sum_{\substack{k,m,n\in\Z^3\setminus\{\vec 0\}\\ k+m+n=\vec 0}}\sum_{\sigma_1,\sigma_2,\sigma_3 \in \{0,\pm \}} e^{\ri \omega_{kmn}^{\vec \sigma}\tau}\leray_{-n}^{-\sigma_3} B(\leray_k^{\sigma_1} \Umm,\leray_m^{\sigma_2}\Vmm),
\end{equation}
where  $\tau=Nt$ and
\begin{equation}\label{omegadef}
\quad\omega_{kmn}^{\vec \sigma}:=\sigma_1 \omega_k +\sigma_2 \omega_m + \sigma_3  \omega_n, \quad\text{for}\quad\vec \sigma= (\sigma_1,\sigma_2,\sigma_3)\in\{0,\pm\}^{3}.
\end{equation}
Under our notational convention, the input to the bilinearity is represented via the modes with wavevectors $k, m$, and the output wavevector $-n$.

Exact resonances correspond to triplets $(k,m,n)\in(\Z^3\setminus\{\vec 0\})^3$ satisfying  $\omega_{kmn}^{\vec \sigma}=0$ and $k+m+n=\vec 0$. Restrictions of nonlinear interactions  to  resonant modes play a decisive role in systems that arise from \eqref{boussi0}, in the limits $N,\Omega\to\infty$ with fixed $\eta$,  \cite[Lemma 4.1]{BMN1}. The slow-fast  dichotomy,  reflected in the sign of each of the three interacting modes,  results in a decoupling in the aforementioned resonant limits.  In the case of purely slow interactions, the 3D quasigeostrophic (3D-QG) system is obtained  via restricting the bilinearity in  \eqref{mod0}  to only  $\vec \sigma=(0,0,0)$, since no other types of interaction sets with slow output modes can occur in the exact resonance setting \cite{EM1}. In particular, 3D-QG has been shown to approximate the dynamics of \eqref{boussi0} in various limiting settings \cite{BB1}, \cite{BMN1}, \cite{BMN2}, \cite{BMNZ}, \cite{EM1}, \cite{EM2}, \cite{GAL1}, \cite{GAL2}.  Nevertheless, the fast ageostrophic (AG) part  of the solution, containing but not limited to fast-fast-fast (FFF) wave interactions, plays a significant role as well in the exact resonance setting \cite{BMN1}.

Our main goal is to accurately approximate \eqref{boussi0}, under no limiting considerations, with a well-posed system.  For that, we approximate the original bilinearity using  a near resonant bilinearity $\wt B(Nt,\cdot,\cdot)$, which  takes into account fundamentally more input and output interactions between the $(k, m, n)$ triplet modes  compared to those of an exact resonance  approximation.  We also emphasize that the global solvability of our system is independent of physical parameters $N,\Omega$, in contrast to  aforementioned results.
\begin{subsection}{Near resonant approximation based on restricted interactions}
The eigenmode  expansion of $B(N t, \cdot,\cdot)$ in \eqref{mod0} allows us to rigorously define a restricted bilinearity $\wt B(Nt,\cdot,\cdot)$, based on a relaxation of the exact resonance notion.  In more detail, the proposed restriction on the level of interaction sets  is quantified by two so-called bandwidths that depend on  $\max\{|\ck|,|\cm|,|\cn|\}$, and satisfy
\begin{equation}\label{deltabound}
\delta(k,m,n),\delta^* (k,m,n)\in \left [0,\min\{\tfrac {\eta} 2, \tfrac 1 2\}\right).
\end{equation}
The purpose of $\delta$ and $\delta^*$ is to control the magnitude $|\omega_{kmn}^{\vec \sigma}|$ in two different settings. 
 On one hand, we define the   restricted  fast-fast-fast (FFF) interactions  set
\begin{equation}\label{nrrnsfast}
\cN^{\textnormal {FFF}}:= \{(k,m,n)\in (\Z^3 \setminus \{\vec 0\})^3: k+m+n=\vec 0, \min_{\vec \sigma \in\{\pm\}^3} | \omega_{kmn}^{\vec \sigma} |\leq\delta(k,m,n)\},
\end{equation}
with
\[
\delta(k,m,n) \lesssim \min \{|\ck|^{-1},|\cm|^{-1},|\cn|^{-1}\}.
\]

On the other hand, we define  mixed near resonant subsets, for fast-fast-slow (FFS) and fast-slow-fast (FSF) interactions, as follows
\begin{align}
& \cN^{\textnormal {FFS}} := \{(k,m,n) \in (\Z^3\setminus\{\vec0\})^3 : k+m+n=\vec 0,  |\omega_k - \omega_m|\leq \delta^*(k,m,n)\}{,}\\
& \cN^{\textnormal {FSF}} := \{(k,m,n) \in (\Z^3\setminus\{\vec0\})^3 : k+m+n=\vec 0,  |\omega_k - \omega_n|\leq \delta^*(k,m,n)\}{,}
\end{align}
under the assumption that
\[
\delta^* (k,m,n)\lesssim \min \{|\ck|^{-\xi},|\cm|^{-\xi},|\cn|^{-\xi}\},\quad\text{for a  parameter}\quad\xi>0.
\]

Under our notational convention, the first two superscripts  of the mixed  sets stand for the signs of the input modes, with the last one reserved for that of the output.
By \eqref{deltabound} and  $\min\{\eta, 1\}\leq\omega_k$ for all  $k\in \Z^3,$ there are no near resonant interactions between one fast and two slow modes, namely there are no FSS, SFS and SSF interactions. Also,  in view of \eqref{omegadef}, all slow-slow-slow (SSS) interactions are  trivially exact resonances. Finally, we will make no restrictions on slow-fast-fast (SFF) interactions  since this is permitted as far as global existence is concerned.

  In order to use our newly introduced sets to restrict the bilinearity of \eqref{boussmod0}, it is necessary to keep track of the interaction signs involved. Hence, we introduce the following  notation:
  \begin{equation}\label{bcoeff}
B_{kmn}^{\sigma_1\sigma_2\sigma_3} (\tau,\Umm,\Vmm):=  e^{\ri \omega_{kmn}^{\vec \sigma}\tau}\leray_{-n}^{-\sigma_3} B(\leray_k^{\sigma_1} \Umm,\leray_m^{\sigma_2}\Vmm),
\end{equation}
with the convention that $B_{kmn}^{\sigma_1\sigma_2\sigma_3} (\Umm,\Vmm) = B_{kmn}^{\sigma_1\sigma_2\sigma_3} (0,\Umm,\Vmm)$. Here and in what follows, for the sake of brevity, we write $k,m,n;conv$ in order to designate the summing index, $k,m,n\in \Z^3\setminus\{\vec 0\}$, $k+m+n=\vec 0$, of any convolution sum that occurs throughout the text. In addition, the default range for any sign occurring in the following sums is $\{0,\pm\}$, up to the specified constraints.
Then,  the slow output part of the original bilinearity  can be expanded as follows
\begin{align*}
B_s (\tau,\Umm, \Vmm)
&=\skm B_{kmn}^{0 0 0}(\Umm,\Vmm)  +\sum_{\substack{k,m,n;conv}}\sum_{(\sigma_1,\sigma_2)\neq (0,0)} B_{kmn}^{\sigma_1 \sigma_2 0}(\tau, \Umm,\Vmm).
\end{align*}
In an analogous manner, the fast output terms can be expanded into terms with purely fast and mixed input as follows
\begin{align*}
B_f (\tau,\Umm, \Vmm)& = \sum_{\substack{k,m,n;conv}}\sum_{\sigma_1\sigma_2 \sigma_3 \neq 0}  B_{kmn}^{\sigma_1 \sigma_2 \sigma_3}(\tau, \Umm,\Vmm) +\sum_{\substack{k,m,n;conv}}\sum_{\substack{\sigma_1  \sigma_2  = 0}} \sum_{\sigma_3 \neq 0} B_{kmn}^{\sigma_1 \sigma_2 \sigma_3}(\tau, \Umm,\Vmm).
\end{align*}

Equipped with this notation, we
  define the approximate terms with slow output as follows:
\begin{equation}\label{restrictedBS}
\wt B_s (\tau,\Umm, \Vmm)  = \skm   B_{kmn}^{000}(\Umm,\Vmm) +\sum_{\substack{k,m,n;conv}}\sum_{\sigma_1 \sigma_2 < 0}   B_{kmn}^{\sigma_1 \sigma_2 0}(\tau, \Umm,\Vmm)\ind_{\cN^{\textnormal {FFS}}}(k,m,n),
\end{equation}
where { $\ind_{A}(\cdot,\cdot,\cdot)$ is the characteristic  function of a set $A$}. 
In contrast to the original slow  bilinearity, the input fields are now  either  both fast or both slow since, again, the restriction on $\delta^*$ in \eqref{deltabound}  trivially excludes SFS and FSS terms in our approximation.  We emphasize  that the $\tau$-independent SSS part of the bilinearity is unrestricted in \eqref{restrictedBS} and must all be considered in any notion of resonances. Finally, we note that  FFS interactions with the same fast sign have been excluded from our considerations, as the corresponding values of $|\omega_{kmn}^{\vec\sigma}|$ are bounded from below by $2 \min \{ \eta, 1\}$  which is greater than the upper bound in \eqref{deltabound}.

 In a similar manner, we define an approximate fast bilinearity as follows
\begin{align}\label{restrictedBF}
&\wt B_f (\tau,\Umm,\Vmm): = \sum_{\substack{k,m,n;conv}} \sum_{\sigma_1\sigma_2\sigma_3 \neq 0} B_{kmn}^{\sigma_1 \sigma_2 \sigma_3}(\tau, \Umm,\Vmm) \ind_{\cN^{\textnormal {FFF}}}(k,m,n)\\
&+\skm \sum_{\substack{\sigma_1 \sigma_3 < 0}}   B_{kmn}^{\sigma_1 0 \sigma_3}(\tau, \Umm,\Vmm)\ind_{\cN^{\textnormal {FSF}}}(k,m,n)+\skm \sum_{\sigma_2  \sigma_3 \neq 0}   B_{kmn}^{0 \sigma_2  \sigma_3 }(\tau, \Umm,\Vmm). \nonumber
\end{align}
Our fast approximation only excludes  SSF interactions, with restrictions posed on the  FFF and FSF subsets. The latter is also subjected to a sign restriction, similarly to the slow mixed term. Nevertheless, the SFF part of the bilinearity is identical to the original.

Since $\nu_1 \neq \nu_2$ in general,  a further approximation is needed for the Laplacian terms. In particular, the  non-commutativity of the matrix $\boldsymbol{\nu}$ with fast and slow projections introduces unwanted mixing effects.  Thus, we consider  the modified diagonal  Laplacian operator
\begin{equation}\label{laplacianmod}
 \wt A \wt \Umm : =  -\wt\nu_{11}  \Delta \Ummt_s - \wt\nu_{22}  \Delta \Ummt_f.
\end{equation}
Here, $\wt\nu_{11}$ and $\wt\nu_{12}$ denote { the} scalar pseudodifferential operators of degree 0, depending linearly on $\nu_1$ and $\nu_2$, which result  from eliminating the fast scale in $A$. { Their definition based on this strategy is given in \eqref{visco1}-\eqref{visco2}.}
     In other words, $\wt A$ is independent of $\tau = Nt.$ Finally, we  note that $\wt A$ is elliptic with  ellipticity constant   $\nu_{min} := \min \{\nu_1, \nu_2\}$.

 We set
 \begin{equation}\label{restrictedB}
  \wt B(\Umm,\Vmm):= \wt B(0,\Umm, \Vmm),\quad\wt B_f(\Umm,\Vmm):= \wt B_f(0,\Umm, \Vmm),\quad\wt B_s(\Umm,\Vmm):= \wt B_s(0,\Umm, \Vmm)
\end{equation}
 for the restricted bilinearity of the original system  \eqref{boussi0}. In particular, 
\[
\wt B(\Ummt,\Ummt) = \wt B_s (\Ummt,\Ummt) + \wt B_f (\Ummt,\Ummt),
\]
 via \eqref{restrictedBS},\eqref{restrictedBF}, and \eqref{restrictedB} so that  we arrive at the near resonant approximation of the Boussinesq system
\begin{equation}\label{restrictedS}
\partial_t \Ummt  + \wt B(\Ummt,\Ummt) +  \wt A \Ummt = N \Le \Ummt.
\end{equation} 
In the absence of  the dissipative term, system \eqref{restrictedS} would  satisfy the fundamental property of $L^2$ energy conservation,  since the FFS and FSF sets crucially share a common bandwidth $\delta^*$. 
\end{subsection}
\begin{subsection}{Main results}
The existence part for our proposed system is covered by the following a-priori estimate.  Here and below, the dependence of a constant on other quantities is always in the form
of a positive-valued, smooth function. Dependence on $\T$ is just a shorthand notation for
dependence on $\sL_1 , \sL_2.$  All our estimates depend smoothly on $\sL_1,\sL_2$, and are independent of $N,\Omega$ unless stated otherwise.
\begin{theorem}\label{globalexistenceT}
Let $\nu_1, \nu_2,T>0$ and $\Ummt_0 \in H^1 (\T;\R^4)$ be a   divergence-free and zero-mean vector field. Consider a solution $\Ummt$ of the near resonant approximation to the Boussinesq system \eqref{restrictedS}
for $t\in [0,T],$ with initial data $\Ummt_0$. Moreover, let $\delta, \delta^*\in \left[0,\min\{\frac {\eta} 2, \frac 1 2\}\right)$ be the bandwidths for the sets $\cN^{\textnormal {FFF}}$ and $\cN^{\textnormal {FFS}}$, respectively.  If 
\[
\delta  \lesssim \min\{|\ck|^{-1},|\cm|^{-1},|\cn|^{-1}\}\quad\text{and}\quad\delta^*  \lesssim \min\{|\ck|^{-\frac 6 5},|\cm|^{- \frac 6 5},|\cn|^{- \frac 6 5}\},
\]
then there exist constants ${C_1= C_1}(\eta,\T)$ and ${C_2=C_2( \eta, \T, \nu_{min}^{-1}, \|\wt\Umm_0\|_{L^2}, { \|( \wt{\Umm}_0 )_s \|_{H^1}})}$ both of which also depend on the implied constants in the above conditions, so that
\begin{equation}\label{mains}
 \|\wt \Umm_s(T)\|_{H^1}^2+2\nu_{min}\int_0^T \|\Ummt_s\|_{H^2}^2 \rd t\leq C_1 \left( \nu_{min}^{-1} \|\wt \Umm_0\|_{L^2}^2 +\|( \wt{\Umm}_0 )_s\|_{H^1}\right)^{2}
\end{equation}
and
\begin{equation}\label{mainf2}
 \|\wt \Umm_f(T)\|_{H^1}^2 +\nu_{min}\int_0^T \|\Ummt_f \|_{H^2}^2\rd t  \leq { C_2} \|( \wt{\Umm}_0 )_f\|_{H^1}^2.
\end{equation}
\end{theorem}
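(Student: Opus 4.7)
\emph{Overall strategy.} My plan is a classical energy-method cascade that respects the slow/fast splitting introduced in Section~1.1. The architecture is sequential: a global $L^2$ bound for $\Ummt$, then a slow $H^1$ estimate (which depends only on the $L^2$ baseline), and finally a fast $H^1$ estimate (which additionally uses the time-integrated slow $H^2$ control from the previous step). In each step, the usual convolution/paraproduct bounds are replaced by restricted convolution estimates that exploit the bandwidth assumptions on $\delta,\delta^*$.

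\medskip

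\textbf{Step~1 ($L^2$ baseline).} Pair \eqref{restrictedS} with $\Ummt$ in $L^2$. Anti-Hermiticity of $\Le$ kills the wave term. The design of $\wt B$ noted immediately after \eqref{restrictedS}---equal $\delta^*$-bandwidth for $\cN^{FFS}$ and $\cN^{FSF}$---forces the mixed FFS and FSF trilinear contributions to cancel triad-by-triad when paired against $\Ummt$, while the unrestricted SSS and SFF pieces self-annihilate by the standard divergence-free integration by parts. Ellipticity of $\wt A$ with constant $\nu_{min}$ then yields
\[
\|\Ummt(t)\|_{L^2}^2+2\nu_{min}\int_0^t\|\nabla\Ummt\|_{L^2}^2\,\rd s\leq\|\Ummt_0\|_{L^2}^2,
\]
which supplies the $\nu_{min}^{-1}\|\Ummt_0\|_{L^2}^2$ summand in \eqref{mains}.

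\medskip

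\textbf{Step~2 (slow $H^1$).} Project \eqref{restrictedS} onto the slow subspace. Since $\wt A$ is diagonal with respect to the slow/fast splitting and $\Le\Ummt$ is purely fast, the slow equation reduces to $\partial_t\Ummt_s-\wt\nu_{11}\Delta\Ummt_s+\wt B_s(\Ummt,\Ummt)=0$, which I test against $-\Delta\Ummt_s$. The SSS piece of $\wt B_s$, whose three legs are all slow and hence essentially two-dimensional for this system, yields to a Ladyzhenskaya-type bound that absorbs into the slow dissipation plus a term integrable by Step~1. The restricted FFS piece is controlled via the restricted convolution estimate associated with $\delta^*\lesssim|\cdot|^{-6/5}$, producing schematically a bound of the form $C\|\Ummt_f\|_{L^2}\|\Ummt_f\|_{H^1}\|\Ummt_s\|_{H^2}$; Young absorbs the $\|\Ummt_s\|_{H^2}$ factor into the dissipation, and the remaining factor $\|\Ummt_f\|_{L^2}^2\|\Ummt_f\|_{H^1}^2$ is time-integrable by Step~1. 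Direct integration in $t$ delivers \eqref{mains} with $C_1$ depending only on $\eta,\T$ and the implied bandwidth constants.

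\medskip

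\textbf{Step~3 (fast $H^1$) and main obstacle.} For $\partial_t\Ummt_f-\wt\nu_{22}\Delta\Ummt_f+\wt B_f(\Ummt,\Ummt)=N\Le\Ummt_f$, test with $-\Delta\Ummt_f$; the wave term vanishes because $\Le$ is anti-Hermitian and commutes with $\Delta$. The FFF piece of $\wt B_f$ is dispatched by the restricted convolution estimate with bandwidth $\delta\lesssim|\cdot|^{-1}$, and the restricted FSF piece by that with bandwidth $\delta^*\lesssim|\cdot|^{-6/5}$. The \emph{unrestricted} SFF piece is the genuine obstruction, since it has no bandwidth to exploit. The way out is to locate the hidden transport/commutator cancellation in its mixed-type interaction coefficient---the ``careful examination of some mixed type interaction coefficients'' flagged in the abstract---which buys back one derivative and yields a schematic bound of the form $C\|\Ummt_s\|_{H^2}\|\Ummt_f\|_{H^1}\|\Ummt_f\|_{H^2}$; Young absorbs the $\|\Ummt_f\|_{H^2}$ factor into the fast dissipation, leaving a Gr\"onwall-type factor $\|\Ummt_s\|_{H^2}^2$ whose time integral is controlled by Step~2, which produces \eqref{mainf2} with the stated dependence of $C_2$. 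Beyond this structural issue, the hardest external input to the whole proof is the restricted convolution estimates themselves, which reduce to sharp counts of integer lattice points in $\delta$- and $\delta^*$-neighbourhoods of the resonance surface---precisely the integer point counting and elliptic integral work advertised in the keywords.
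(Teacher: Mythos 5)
Your cascade architecture ($L^2$ baseline $\to$ slow $H^1$ $\to$ fast $H^1$ via Gr\"onwall) matches the paper's, and your Step~1 and the outline of Step~3 are essentially what the paper does. However, there is a genuine gap in Step~2, and a misattribution in Step~3.

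\emph{The gap in Step~2.} You test the slow equation with $-\Delta\Ummt_s$ and propose to handle the SSS trilinear term by ``a Ladyzhenskaya-type bound that absorbs into the slow dissipation plus a term integrable by Step~1.'' This does not close: the SSS term tested against $-\Delta\Ummt_s$ is a genuine trilinear term of 3D Navier--Stokes type and no 2D-like product estimate makes it absorbable without a Gr\"onwall factor that would contaminate the constant $C_1$ with initial-data dependence. The paper instead rewrites the slow dynamics as a transport equation for the linear potential vorticity $\wt Q = \lpv\Ummt$ (equation \eqref{approxeqlpv}) and tests at the $L^2$ level for $\wt Q$, which via Lemma~\ref{orthogonalityL} corresponds to pairing $\Ummt_s$ with $-\Delta_\eta\Ummt_s$, not $-\Delta\Ummt_s$. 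With this weighting, the SSS term \emph{vanishes identically} (Lemma~\ref{slowtransport}, identity~\eqref{slowtransportgainL2}), since $\lpv B_s(\Ummt_s,\Ummt_s) = -\nabla_H\cdot(\wt Q\,\dsn{-1}\nabla_H^\perp\wt Q)$ is the enstrophy-conserving 2D-vorticity transport structure. The remaining FFS forcing term, via Lemma~\ref{restrictedlpvC} with $\zeta=\tfrac65$, $a=\tfrac35$, gives $\partial_t\|\wt Q\|_{L^2}\leq C\|\Ummt_f\|_{H^1}^2$ with \emph{no dissipation absorption needed}; integrating and using the $L^2$ baseline yields~\eqref{mains} with $C_1=C_1(\eta,\T)$. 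Without the PV reformulation, neither the exact cancellation nor the clean constant is accessible.

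\emph{The misattribution in Step~3.} You identify the unrestricted SFF piece as ``the genuine obstruction'' and invoke the ``careful examination of mixed type interaction coefficients.'' In the paper these are separate ingredients applied to different terms. The interaction-coefficient analysis (Lemma~\ref{scof}) applies to the \emph{FFS} bilinearity in Step~2: it shows $|S_{kmn}^{+-0}|\lesssim_\eta |\omega_k-\omega_m|\,|\ck|\,|\cm|\,|\cn|^{-1}$, so the restriction $|\omega_k-\omega_m|\leq\delta^*$ turns the interaction coefficient itself into a regularizing factor. The SFF piece in Step~3 is controlled by the much more elementary Lemma~\ref{sff}, which gains one derivative via the antisymmetric difference $|\cn|^2-|\cm|^2$ and incompressibility --- a standard commutator estimate with no role for the bandwidths. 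You should also note that the FSF restricted convolution (Theorem~\ref{restrictedffT}) relies on the reduced-symmetry counting of Lemma~\ref{restrictedl} and Corollary~\ref{mixedvolumec}, which give only $O(\sqrt{\delta^*}M^3+M^2)$ rather than the full-symmetry $O(\delta M^3)$ of the FFF case; this asymmetry is why $\delta^*$ needs the stronger decay $|\cdot|^{-6/5}$.
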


Standard existence results on local in time  strong solutions
 \[
\Ummt\in C\left([0,T);H^{\ell}(\T;\R^4) \right) \cap L^2\left([0,T);H^{\ell+1}(\T;\R^4) \right)\quad \ell\geq \frac 1 2,
\] $T>0,$ for systems with Navier-Stokes type bilinearity \cite[Theorem 3.5]{CDGG} are applicable to our system. In particular,  the modified viscosity matrix $\wt{\boldsymbol{\nu}}$ is appropriately controlled due to Lemma \ref{ellipticity}, so that all the corresponding energy estimates go through. Crucially, our modified bilinearity obeys better product rules than an ordinary Navier-Stokes one in three dimensions, as we will prove in the sequel.   Thus, well-posedness of the approximate system follows from Theorem \ref{globalexistenceT}, by  arguments similar to  \cite[Theorem 3.5 and its remark, Corollary 3.1]{CDGG} and \cite[Lemma 6.5]{BCZNS}.\\
\begin{remark}
An interesting partial decoupling property can be observed in \eqref{mains}: the $L^{\infty}_t H^1_x$  and $L^{2}_t H^2_x$  norms of the slow part are bounded independently  of $\|( \wt{\Umm}_0 )_f\|_{H^1}$. This contrasts
with the coupling effect due to near resonance approximation of the bilinearities.
\end{remark}
The proof of Theorem \ref{globalexistenceT} is based on a separate treatment for the slow and fast parts of the solution, taking into account some good properties of the slow equation.  In more detail, we equivalently write the slow output part of \eqref{restrictedS} as a transport equation for the linear potential vorticity  $Q= \partial_1 U_2 - \partial_2 U_1 - \eta \partial_3 \rho$ which corresponds to the linear part of Ertel's potential vorticity. On the Fourier side,  the  linear potential vorticity is expressed as
\begin{equation}\label{eqlpv}
Q =   \ri \sum_{k\in \Z^3} e^{\ri \ck\cdot x}\sqrt{{ \ck_1^2+ \ck_2^2} + \eta^2 k_3^2}\cip{U_k}{r_k^0}.
\end{equation}
By introducing { an} operator $\lpv : H^1(\T;\R^4)\to L^2 (\T;\R)$ such that $\lpv \Umm =\lpv\Umm_s = Q$,  we  have an equivalent potential vorticity formulation of the slow equation as
\[
\partial_t Q + \lpv\wt B_s (\Umm,\Umm) - \wt \nu_{11} \Delta Q= 0.
\]
The slow bilinearity  $\lpv \wt B_s (\Umm,\Umm)$ splits into two parts: $\lpv B_s(\Umm_s, \Umm_s)$ and $\lpv \wt B_s(\Umm_f, \Umm_f)$,  while the slow-fast terms vanish by definition of $\wt B_s$. The purely slow part of the bilinearity has a favourable transport structure. On the other hand,  the slow mixed part, which is absent in the exact resonance setting,   benefits from interaction coefficients that are proportional to $\omega_{kmn}^{\vec\sigma}$ and counteract coupling, see Section \ref{slowP}. In fact, smaller values of  $\delta^*$ have a stronger  effect on the slow part. 

Further regularity gains, for both the slow and fast output parts of \eqref{restrictedS}, are established via lattice point counting methods. In particular, by a standard harmonic analysis argument, the number of interactions in a near resonance set determines the regularity properties in the estimates of the corresponding restricted convolution.  This  approach is standard by now in the exact resonance literature, see   \cite[Lemma 3.1]{BMN3} and \cite[Lemma 6.2]{CDGG}, but presents new challenges in the near resonance case.

The number of interactions in the sets $\cN^{\textnormal {FFF}},\cN^{\textnormal {FFS}}$ and $\cN^{\textnormal {FSF}}$  is estimated via two different approaches. For purely fast interactions included in $\cN^{\textnormal {FFF}}$,  it suffices to investigate the cardinality of the localised set
\[
\left\{k\in\Z^3\setminus\{\vec 0\}:\min_{\vec \sigma \in \{\pm\}^3}|\omega_{kmn}^{\vec \sigma}|\leq\delta,\; \frac 1 2 |\cn|\leq |\ck|\leq |\cn|\right\}\quad\text{for fixed}\quad {n\in\Z^3\setminus\{\vec 0\}},
\]
which turns out to be  $O(\delta |\cn|^3)$.  In detail,  the lattice counting problem in this case is converted to volume estimates for sublevel sets, up to a well-behaved error term.  We then   utilize  results proven in \cite{BCZNS} for the elliptic integrals arising from the volume integrals.  Finally, we  remark that our result differs by the corresponding result in \cite{BCZNS}, for the  rotating Navier-Stokes system,  by a logarithmic in $\delta$ factor.

 On the other hand, for mixed interactions included in $\cN^{\textnormal {FFS}}$ and $\cN^{\textnormal {FSF}}$, we essentially need to investigate  the cardinality of the following set:
\[
\left\{k \in Z^3\setminus\{\vec 0\}:  \left|f_{\eta}\left(\frac {k_3}{|\ck|}\right) - f_{\eta}\left(\frac {n_3} {|\cn|}\right) \right|\leq \delta^* ,\; M\leq |\ck| < 2M,\; |\ck|\leq |\cn| \right\}
\]
for fixed $M>0, n\in\Z^3\setminus\{\vec 0\}$,
where
 $$f_{\eta} (x) := \sqrt{1+ (\eta^2 -1)x^2 }.$$
 The cardinality turns out to be $O(\sqrt{\delta^*} M^{3}+M^{2})$, which is sharp as  shown in Appendix \ref{appendixlb}.

Theorem \ref{globalexistenceT} is the analog of the recently established result \cite[Theorem 1.1]{BCZNS} on the rotating Navier-Stokes system, with some crucial differences. In that work,  well-posedness  results were obtained for a near resonant  restricted  Navier-Stokes type system, with bandwidth  bounded as
$\delta_{NS} \log(\frac 1 {\delta_{NS}}) \lesssim \min \{c,|\ck|^{-1}, |\cm|^{-1}, |\cn|^{-1}\}$, for an absolute constant $c$.  
 Several subtleties arise in comparison. First,  for  the rotating Navier-Stokes system, slow modes are characterized  by a null vertical component, i.e. $k_3=0$, whereas in  \eqref{boussi0}
slow modes occur for every   wavevector, thus SSS resonance occurs for every triplet $k,m,{ n}$ in the convolution sum. Second,  whereas the corresponding rotating Navier-Stokes near resonance condition is:
\[
\left| \frac {k_3} {|\ck|} \pm \frac {m_3} {|\cm|}   \pm  \frac {n_3} {|\cn|} \right|  \leq \delta_{NS},
\]
the non-linearity of $f_\eta$ further complicates things. In fact, non-SSS near resonances  are characterized by:
\[
\left|\sigma_1 f_{\eta}\left(\frac {k_3} {|\ck|} \right) +\sigma_2 f_{\eta}\left(\frac {m_3} {|\cm|} \right) + \sigma_3 f_{\eta}\left(\frac {n_3} {|\cn|} \right)\right|  \leq \delta_\sigma, 
\]
where
\begin{equation}\label{mdelta}
 \delta_\sigma= \begin{cases}
 \delta,& \text{when}\quad \sigma_1\sigma_2\sigma_3 \neq 0\\
\delta^*, &\text{when}\quad (\sigma_1,\sigma_2,\sigma_3)=(\pm,\mp,0),(\pm,0,\mp)\\
2 \max \{\eta, 1\}, &\text{when}\quad \sigma_1=0,\sigma_2 \sigma_3 \neq 0\\
\min\{\eta, 1\} -\epsilon, &\text{for any remaining cases with }\vec \sigma \neq \vec 0
\end{cases}
\end{equation}
 with arbitrarily small $\epsilon>0$.
Note that, a strictly positive gap between $\delta_\sigma$ and 0 substitutes small-divisor estimates, in terms of control of   $|\omega_{kmn}^{\vec \sigma}|^{-1}$.
 Lastly, a-priori estimates for \eqref{restrictedS} have to take into account the slow-fast splitting of the system that will  be exploited in the proof of 
Theorem \ref{globalexistenceT}, in contrast to the unified approach of the rotating Navier-Stokes result.

We now proceed to the statement of our improved convolution sum estimates.  First, we present a result on the restricted FFF terms, in the spirit of  \cite[Theorem 4.1]{BMN1} for the exact resonance case.  We also mention  the corresponding  results on the rotating Navier-Stokes system, \cite[Theorem 3.1]{BMN3} and \cite[Theorem 1.3]{BCZNS}, in the exact and near resonance settings, respectively. The regularity for the restricted convolutions is measured in  Sobolev spaces, with the notation $\pD^{\ell} \Umm = \sum_{k\in\Z^3\setminus\{\vec 0\}} e^{\ri \ck \cdot x}|\ck|^{\ell}U_k $.
\begin{theorem}\label{restrictedfT} 
Let $\ell\in(0,1]$ and $\umm,\vmm \in H^{\ell+1}(\T;\R^4)$ be divergence-free and zero-mean vector fields. Moreover, in the FFF near resonance set \eqref{nrrnsfast}, let  $\delta \in \left[0, \min \{\frac {\eta} 2, \frac 1 2\}\right)$ satisfy
\[
\delta(k,m,n) \leq { C_\delta}\min \{|\ck|^{-1}, |\cm|^{-1}, |\cn|^{-1}\}.
\]
Then, the following estimate holds true  
\begin{equation}\label{restrictedfff1}
\left| \ip{\pD^{\ell} \wt B_f (\umm_f,\vmm_f),\pD^{\ell} \vmm_f} \right|  \lesssim  \|\umm_f \|_{H^1} \|  \vmm_f \|_{H^{\ell}}   \|  \vmm_f \|_{H^{\ell+1}},
\end{equation}
 with the implied constant depending on $\eta, \ell, \sL_1, \sL_2, { C_\delta}.$
\end{theorem}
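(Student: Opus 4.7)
The plan is to expand the trilinear form in Fourier, reduce to a scalar convolution sum over the near-resonance set $\cN^{FFF}$, and then apply the lattice-point counting estimate together with Cauchy--Schwarz in various dyadic regimes.

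First, since both arguments of $\wt B_f$ lie in the fast subspace, the FSF and SFF pieces of \eqref{restrictedBF} vanish, as each requires one slow-projected input while $\leray_k^0\umm_f=\leray_m^0\vmm_f=0$. Only the $\cN^{FFF}$-restricted FFF piece survives, and Parseval's identity gives
\begin{equation*}
\ip{\pD^\ell\wt B_f(\umm_f,\vmm_f),\pD^\ell\vmm_f} = \sum_{k,m,n;conv}\sum_{\vec\sigma\in\{\pm\}^3}\ind_{\cN^{FFF}}(k,m,n)\,|\cn|^{2\ell}\,\bigl\langle B^{\vec\sigma}_{kmn}(\umm_f,\vmm_f),\vmm_f\bigr\rangle.
\end{equation*}
Each coefficient $B^{\vec\sigma}_{kmn}$ has operator norm $\lesssim|\cm|$, since $B$ carries one derivative on its second argument and the eigenprojections $\leray_k^\sigma$ are uniformly bounded. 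Denoting by $U_k,V_k$ the magnitudes of the Fourier coefficients of $\umm_f,\vmm_f$, one reduces matters to bounding the scalar sum
\begin{equation*}
S:=\sum_{k,m,n;conv}\ind_{\cN^{FFF}}(k,m,n)\,|\cn|^{2\ell}\,|\cm|\,|U_k||V_m||V_n|
\end{equation*}
by $\|\umm_f\|_{H^1}\|\vmm_f\|_{H^\ell}\|\vmm_f\|_{H^{\ell+1}}$.

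Next I would decompose each field dyadically, writing $\umm_f=\sum_K u^K$ and $\vmm_f=\sum_M v^M$ with $u^K,v^M$ spectrally supported in annuli of radii $\sim K, M$. The convolution constraint forces at least two of $K,M,N$ to be comparable and to dominate the third, producing the standard high-high-high and high-high-low regimes. In a representative regime such as $N\gtrsim K,M$, for each fixed $n$ with $|\cn|\sim N$ the relation $m=-k-n$ leaves $k$ as the only free index, and Cauchy--Schwarz yields
\begin{equation*}
\sum_k\ind_{\cN^{FFF}}|U_k||V_{-k-n}|\leq\bigl(\#\{k:\ind_{\cN^{FFF}}=1,\,|\ck|\sim K\}\bigr)^{1/2}\Bigl(\sum_k|U_k|^2|V_{-k-n}|^2\Bigr)^{1/2}.
\end{equation*}
The cardinality is controlled by the refined dyadic-localised count $\lesssim\min(K^3,\delta N^3)$, the first bound being trivial from volume and the second following from the $O(\delta|\cn|^3)\lesssim|\cn|^2$ estimate cited in the discussion preceding this theorem. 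Combined with Cauchy--Schwarz in $n$ and summation over the dyadic scale $N$, this contributes precisely the half-derivative gain that replaces the $H^{3/2+\varepsilon}$ factor of the unrestricted 3D Navier--Stokes product estimate by the $H^1$ norm on $\umm_f$.

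The main obstacle will be the correct distribution of the derivative weights $|\ck|^1|\cm|^\ell|\cn|^{\ell+1}$ across $|U_k|,|V_m|,|V_n|$ in every dyadic regime. The most delicate case is the high-high-low configuration $K\ll N\sim M$, where fixing $n$ is suboptimal and one must instead fix the low mode $k$ (or $m$), appealing to a permutation-symmetric version of the FFF counting bound. Since the defining condition of $\cN^{FFF}$ is symmetric under permutations of the triple $(k,m,n)$, the lattice-point argument from \cite{BCZNS} applies with any variable held fixed; verifying that the same $O(\delta\cdot(\max)^3)$ bound persists is routine. Summation of the resulting dyadic geometric series converges for $\ell\in(0,1]$, and collecting all the pieces delivers the advertised estimate.
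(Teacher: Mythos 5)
Your setup — Fourier expansion, restriction to the FFF set, and reduction to a scalar restricted convolution controlled by a lattice count — is in the right spirit, and your observation that only the $\cN^{FFF}$ piece of $\wt B_f$ survives when both inputs are fast is correct. But there is a genuine gap: you apply the crude bound $|B^{\vec\sigma}_{kmn}|\lesssim|\cm|$ and leave the full weight $|\cn|^{2\ell}$ on the output mode, which commits you to bounding
\[
\skm \ind_{\cN^{FFF}}\,|\cn|^{2\ell}\,|\cm|\,|U_k||V_m||V_n|.
\]
In the configuration $|\ck|\ll|\cm|\sim|\cn|$ this weight is $\sim|\cn|^{2\ell+1}$, whereas the target $\|\umm_f\|_{H^1}\|\vmm_f\|_{H^\ell}\|\vmm_f\|_{H^{\ell+1}}$ wants $\sim|\ck|\,|\cn|^{2\ell+1}$. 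That missing factor $|\ck|/|\cn|$ cannot be recovered by ``redistributing'' derivatives, by dyadic bookkeeping, or by swapping which index you hold fixed in the lattice count: the counting bound $O(\delta|\cn|^3)$ only provides the single $\tfrac{1}{2}$-derivative gain per argument that $\beta=2$ in Lemma \ref{restricted:L} gives, not an extra $|\ck|$.

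What you are missing is the commutator cancellation. The paper exploits that both the second input and the test function are $\vmm_f$: using the $m\leftrightarrow n$ symmetry (coming from incompressibility, $\cip{r_k^{\sigma_1}}{\cm'}=-\cip{r_k^{\sigma_1}}{\cn'}$) one symmetrizes the sum so that the weight becomes $\tfrac12\left(|\cn|^{2\ell}-|\cm|^{2\ell}\right)$. The mean value theorem then gives $\left||\cn|^\ell-|\cm|^\ell\right|\le\ell|\ck|\max\{|\cn|^{\ell-1},|\cm|^{\ell-1}\}$, and together with $|\cip{r_k^{\sigma_1}}{\cm'}|\le\min\{|\cm|,|\cn|\}$ (the sharper incompressibility bound, not just $\le|\cm|$) the total weight collapses to $\lesssim|\ck|\,|\cm|^\ell|\cn|^\ell$, with the derivative landing squarely on $\umm_f$. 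After that, a single application of Lemma \ref{restricted:L} with $\beta=2$ finishes the proof; no dyadic decomposition is needed. Without the symmetrization step your argument would fail precisely in the high-high-low regime you flag as ``most delicate,'' and the fix is not a different choice of which mode to fix but rather the commutator structure itself.
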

Theorem \ref{restrictedfT} allows us to gain $\frac 1 2$ derivatives  when performing bilinear estimates in  three dimensions,  thus making the interactions $2D$-like. In more detail, given  two scalar functions $u,v$ defined on $\R^d$ or $\mathbb T^d$, and $a,b \in [0, \frac d 2),$ with $0<a+b$, we have
\[
\| u v\|_{H^{a+b - \frac d 2 } } \lesssim \| u \|_{H^{a  } }\|  v\|_{H^{b} },
\]
see Appendix \ref{appendixpe} and   \cite[Lemma 6.2]{CDGG}.
Thus, the derivative cost of estimating a product in homogeneous Sobolev spaces with positive index, which are not algebras, is given by $\frac d 2.$

Restrictions to the set $\cN^{\textnormal {FFS}}$ present more subtleties, due to the corresponding interaction coefficients being connected to the mixed interaction bandwidth, $\delta^*$. This phenomenon is not observed in the context of exact resonance, as the interaction coefficient is identically 0  when restricted to the exact resonance set --  see e.g. \cite{BMNZ}, \cite{EM1}.   Furthermore,  the importance of the interaction coefficients for the mixed terms in  near resonance    has not been extensively noted, to the best of our knowledge. An  exception is \cite{TW}, which focuses on long-time asymptotics for the Oceanic Primitive Equations and identifies the importance of interaction coefficients, as far as near resonant interactions are concerned. Moreover, in \cite{AOW}, among various results on numerics and multiscale asymptotic analysis for three fluid models, including \eqref{bous0}, the smallness of interaction coefficients  is highlighted,   in a different near resonant setting. Finally we mention \cite{OGW}, concerning the connection of interaction coefficients with  higher order wave interactions.

 The following Theorem provides convolution sum estimates for FFS terms.
\begin{theorem}\label{restrictedT}
Let $\ell\ge0$ and  $\umm,\wmm \in H^{\ell+1}(\T;\R^4)$ be divergence-free and  zero-mean vector fields.   
Consider any  $\xi \in [\frac 6 5, 2]$ and $\delta^* \in \left[0, \min \{\frac {\eta} 2, \frac 1 2\}\right)$  that satisfy
\begin{equation}\label{bandwidthffs}
\delta^* (k,m,n) \le C_{\delta^*} \min \{|\ck|^{-\xi}, |\cm|^{-\xi}, |\cn|^{-\xi}\}.
\end{equation}
  Then the FFS mixed terms satisfy the following estimate
\begin{align}\label{restrictedffs}
\left| \ip{\pD^{\ell}\wt B_s (\umm_f,\umm_f),\pD^{\ell} \wmm} \right|  &{\lesssim} \|\pD^{\frac 5 2 - \frac {3\xi} 4 - a +\ell_1  }  \umm_f \|_{L^2} \|\pD^{1 + a-\frac \xi 2}  \umm_f \|_{L^2}   \|\pD^{\ell_2}  \wmm \|_{L^2},
\end{align}
for  any $a\in(0, \frac 3 2 - \frac \xi 4)$  and $\ell_1\geq 0,\, \ell_2\in\R$ with $\ell_1+\ell_2=2\ell-1,$ where the implied constant depends on $\eta, \sL_1, \sL_2, \ell,\ell_1,\ell_2,a, \xi,C_{\delta^*}$.
\end{theorem}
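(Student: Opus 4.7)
The plan is to combine the eigenmode expansion of $\wt B_s$ with two structural ingredients already present in the paper: first, the fact that on the FFS set the interaction coefficient of $B_{kmn}^{\sigma_1\sigma_2 0}$ with $\sigma_1\sigma_2<0$ carries a factor proportional to $\omega_k-\omega_m$ (hence bounded by $\delta^*$ on $\cN^{FFS}$, as alluded to in the introduction when discussing the slow mixed part); and second, the sharp lattice point count for $\cN^{FFS}$ localized to a dyadic shell $M\le|\ck|<2M$, which is of order $\sqrt{\delta^*}\,M^3+M^2$.

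First I will expand the inner product via Plancherel using the definition of $\wt B_s$ in \eqref{restrictedBS} restricted to the mixed $\sigma_1\sigma_2<0$ part. Let $\Phi_{kmn}^{\sigma_1\sigma_2}$ denote the scalar symbol obtained by pairing $\leray_{-n}^0 B(\leray_k^{\sigma_1}\umm_f,\leray_m^{\sigma_2}\umm_f)$ against $r_{-n}^0$. A direct computation on the $4\times 4$ symbol of $B$ tested against the slow eigenbasis produces a cancellation of opposite-signed fast modes, yielding the pointwise bound
\[
|\Phi_{kmn}^{\sigma_1\sigma_2}|\lesssim |\omega_k-\omega_m|\,|\cn|,
\]
with constants depending only on $\eta$. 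On $\cN^{FFS}$, this converts into a summand-level factor of size $\delta^*\,|\cn|\lesssim \min\{|\ck|,|\cm|,|\cn|\}^{-\zeta}\,|\cn|$.

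Next I will dyadically decompose in $M:=\max\{|\ck|,|\cm|\}$. Using the $k\leftrightarrow m$ symmetry of the bound I may assume $|\ck|\le|\cm|$, which via the convolution constraint forces $|\cm|,|\cn|\sim M$. Absorbing the weights $|\ck|^{-s_2}$, $|\cm|^{-s_1}$, $|\cn|^{-\ell_2}$ with $s_1=\tfrac 52-\tfrac{3\zeta}{4}-a+\ell_1$, $s_2=1+a-\tfrac{\zeta}{2}$ into the target Sobolev norms and applying Cauchy--Schwarz in $k$ with $n$ fixed (so that $m=-k-n$ is determined), the lattice count gives
\[
\Big(\sum_{\substack{k:(k,m,n)\in\cN^{FFS}\\M\le|\ck|<2M}}1\Big)^{1/2}\lesssim \sqrt{\delta^*}\,M^{3/2}+M.
\]
Multiplying by the $\delta^*$ from the coefficient yields a dyadic bound of the form $(M^{3/2-5\zeta/4}+M^{1-\zeta})\,\|\pD^{s_1}\umm_f\|_{L^2}\|\pD^{s_2}\umm_f\|_{L^2}\|\pD^{\ell_2}\wmm\|_{L^2}$, and the algebraic identity $s_1+s_2+\ell_2=\tfrac52-\tfrac{5\zeta}{4}+2\ell$ is exactly the derivative budget needed to absorb the two $M$-gains. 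The constraint $\zeta\ge\tfrac65$ then makes $M^{3/2-5\zeta/4}$ summable in $M$, while the freedom $a\in(0,\tfrac32-\tfrac\zeta4)$ prevents endpoint divergences and the requirement $\ell_1\ge0$ is used when interpolating between the two regimes of the count.

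The main obstacle I expect is the algebraic derivation of the $(\omega_k-\omega_m)$ cancellation from the restricted symbol, since one must carefully expand the Leray-projected bilinearity in the $\{r_k^0,r_k^\pm\}$ basis, pair against $r_{-n}^0$, and verify that the resulting scalar vanishes identically on $\{\omega_k=\omega_m\}$ rather than merely on a subset; this is the feature that distinguishes the near-resonant mixed interaction from its FFF counterpart and is what makes $\delta^*$ (rather than the larger bandwidths in \eqref{mdelta}) the only price paid. Once this cancellation is established, the rest is a careful bookkeeping of Cauchy--Schwarz applications balanced against the sharp FFS lattice count.
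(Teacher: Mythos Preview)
Your overall strategy matches the paper's: exploit the $(\omega_k-\omega_m)$ cancellation in the FFS interaction coefficient, convert it to a $\delta^*$ gain via the bandwidth hypothesis, and feed the sharp mixed-set lattice count into a dyadic Cauchy--Schwarz argument (what the paper packages as Lemma~\ref{restrictedl} with $\mu=3-\tfrac{\zeta}{2}$). However, two concrete points in your outline would not go through as written.

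\textbf{The coefficient bound.} You claim $|\Phi_{kmn}^{\sigma_1\sigma_2}|\lesssim |\omega_k-\omega_m|\,|\cn|$, but this is not what the eigenbasis computation produces. The paper's Lemma~\ref{scof} gives, after symmetrising the $(+,-,0)$ and $(-,+,0)$ contributions,
\[
|S_{kmn}^{+-0}|\;\lesssim_\eta\;|\omega_k-\omega_m|\,\frac{|\ck|\,|\cm|}{|\cn|}.
\]
The factor $|\ck||\cm|/|\cn|$ is \emph{not} dominated by $|\cn|$ in general (take $k,m$ nearly antiparallel so $|\cn|\ll|\ck|\sim|\cm|$), and conversely the unsymmetrised single term $\cip{r_k^{\sigma_1}}{\cm'}\cip{r_m^{\sigma_2}}{r_n^0}$ carries no $(\omega_k-\omega_m)$ factor at all. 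In the paper's proof this precise structure is what makes the arithmetic close: combined with the $|\cn|^{2\ell}$ weight one obtains $|\ck|\,|\cm|\,|\cn|^{2\ell-1}$, then $|\cn|^{2\ell-1}=|\cn|^{\ell_1}|\cn|^{\ell_2}$ and $|\cn|^{\ell_1}\lesssim|\ck|^{\ell_1}+|\cm|^{\ell_1}$ distribute the derivatives onto the two fast factors with exactly the exponents $\tfrac52-\tfrac{3\zeta}4-a+\ell_1$ and $1+a-\tfrac\zeta2$. With your $|\cn|$ in place of $|\ck||\cm|/|\cn|$ the exponents shift by $\pm1$ and the stated range of $a$ is not recovered.

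\textbf{The counting step.} You fix the slow mode $n$ and sum in $k$, but the FFS constraint $|\omega_k-\omega_m|\le\delta^*$ then becomes $|\omega_k-\omega_{-k-n}|\le\delta^*$, which is \emph{not} the set estimated in Corollary~\ref{mixedvolumec}. That corollary (and Lemma~\ref{mixedvolumel}) fixes one of the two modes appearing in the dispersion difference and counts the other; for $\cN^{FFS}$ this means fixing $m$ (or $k$) and counting $k$ (or $m$) with $|\ck|\le|\cm|$, exactly the setup of Lemma~\ref{restrictedl} under the $k\leftrightarrow m$ symmetry (Remark~\ref{restrictedr}). Relatedly, your assertion that $|\ck|\le|\cm|$ forces $|\cn|\sim M$ fails when $|\ck|\sim|\cm|$ and $n$ is short.

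Fixing both points --- using the sharp coefficient bound $|\omega_k-\omega_m|\,|\ck||\cm|/|\cn|$ and applying the restricted-convolution lemma with the $(k,m)$ symmetry so that one fast mode is frozen in the count --- your sketch becomes precisely the paper's proof.
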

A straightforward modification of  the proof of the previous theorem yields the following result on the level of linear potential vorticity.
\begin{corollary}\label{restrictedlpvC}
Under the assumptions of Theorem \ref{restrictedT},
 the following estimate  holds true
\begin{align}\label{restrictedlpv}
\left| \ip{\lpv \wt B_s (\umm_f,\umm_f),\lpv \wmm} \right|  &\lesssim \|\pD^{\frac 5 2 - \frac {3\xi} 4 - a  }  \umm_f \|_{L^2} \|\pD^{1 + a-\frac \xi 2}  \umm_f \|_{L^2}   \|\lpv  \wmm \|_{L^2},
\end{align}
for all $a\in(0, \frac 3 2 - \frac \xi 4),$ where the implied constant depends on $\eta, \sL_1, \sL_2, \xi, a, C_{\delta^*}$.
\end{corollary}
Theorem \ref{restrictedT} highlights the importance of the interaction coefficients, which will act as a smoothing factor in the convolution sums.  In fact, the parameter $3 - \frac{\xi} 2$ provides an upper bound for the exponent in the power law for the cardinality of  $\cN^{\textnormal {FFS}}$. Whereas we can already save some derivatives  via Lemma \ref{restrictedl},  further calculations on the interaction coefficients, as in  Lemma \ref{scof}, allow  us to lower the derivative cost even more.  In particular, the FFS estimate contains $2\ell+\frac 5 2-\frac {5 \xi} 4$ derivatives. As a consequence, our growth bound for $\delta^*$ is justified as follows. After the standard $L^2$ energy equality for \eqref{restrictedS} is obtained, an $L^\infty_t L^2_x$ estimate for $Q =\lpv\umm$ can only require
  an estimate  for $\umm_f$ in $L^2_t H^1_x$, but not in any higher regularity space. Therefore, in view of  Corollary \ref{restrictedlpvC}, which is essentially the case $\ell=1$, we
can only afford $\frac 3 2-\frac{5 \xi} 4 \leq 0$.

The remaining mixed interactions are covered in the following. 
\begin{theorem}\label{restrictedffT}
Let  $\ell\ge0$ and $\umm,\vmm \in H^{\ell+1}(\T;\R^4)$ be divergence and  zero-mean vector fields.    
 Consider any  $\xi \in [\frac 6 5, 2]$ and $\delta^* \in \left[0, \min \{\frac {\eta} 2, \frac 1 2\}\right)$  that satisfy \eqref{bandwidthffs}.
Then  the FSF mixed terms satisfy the following estimates
\begin{align}\label{restrictedfsf}
\left| \ip{\pD^{\ell} \wt B_f (\umm_f,\vmm_s),\pD^{\ell} \umm_f} \right|  & \lesssim \|\pD  \vmm_s \|_{L^2}   \|\pD^{\frac 3 2+\ell_1 - \frac {\xi} 4 -a }  \umm_f \|_{L^2}  \|\pD^{2\ell-\ell_1+a}  \umm_f \|_{L^2} \\\
& +\|\pD^{\ell_1 + 1}  \vmm_s \|_{L^2}   \|\pD^{  \frac 3 2 - \frac {\xi} 4 -a'  }  \umm_f \|_{L^2} \|\pD^{2\ell-\ell_1+a'}  \umm_f \|_{L^2}, \nonumber 
\end{align}
for  any $a, a'\in(0, \frac 3 2 - \frac \xi 4)$  and $\ell,\ell_1>0$ with $\ell_1<2\ell$, where the implied constant  depends on $\eta, \sL_1, \sL_2, \ell,\ell_1,a,a', \xi,C_{\delta^*}$. 
\end{theorem}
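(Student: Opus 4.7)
The approach parallels the FFS argument of Theorem \ref{restrictedT}, adapted to the asymmetric FSF geometry where the slow input $\vmm_s$ occupies the second-input slot while the two copies of $\umm_f$ play the roles of first input and output. Expanding the trilinear pairing on the Fourier side produces a sum over convolution triples $k+m+n=\vec 0$ with $\sigma_1\sigma_3<0$, restricted to $\cN^{FSF}$, of Fourier coefficients of the three fields weighted by the bilinear symbol $B_{kmn}^{\sigma_1 0 \sigma_3}$ and by $|\ck|^{\ell}|\cn|^{\ell}$. Since the FSF near-resonance condition $|\omega_k-\omega_n|\le\delta^*$ is formally identical, after relabeling, to the FFS condition $|\omega_k-\omega_m|\le\delta^*$ under \eqref{bandwidthffs}, the cardinality estimate underlying Theorem \ref{restrictedT} transfers verbatim.

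First I would perform a Littlewood--Paley decomposition in $|\ck|,|\cm|,|\cn|$, using that $k+m+n=\vec 0$ forces at least two of these magnitudes to be comparable. The two terms on the right-hand side of \eqref{restrictedfsf} correspond to two paraproduct regimes: a regime in which $|\cm|$ is dominated by $\max(|\ck|,|\cn|)$, so the slow mode absorbs only the one derivative coming from the $\nabla$ in the bilinearity and the extra $\ell_1$ derivatives are placed on the fast input (giving the first term); and a complementary regime in which $|\cm|$ dominates (forcing $|\ck|\sim|\cn|\sim|\cm|$ up to one smaller mode), where it is natural to send $\ell_1+1$ derivatives onto $\vmm_s$ (giving the second term).

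Within each regime I would combine the FSF cardinality bound on dyadic shells, namely $\sqrt{\delta^*}M^3+M^2$ (Lemma \ref{restrictedl}), with Cauchy--Schwarz across the convolution and the smallness of $B_{kmn}^{\sigma_1 0 \sigma_3}$ near the exact resonance $\omega_k=\omega_n$ (the FSF analog of Lemma \ref{scof}). These two inputs together reduce the effective Sobolev-product cost per fast factor from the naive $3/2$ down to $3/2-\zeta/4$, which is the origin of the $\frac32-\frac\zeta4$ appearing in both terms. The free parameters $a,a'\in(0,3/2-\zeta/4)$ supply the small derivative losses needed to make the summation over dyadic scales absolutely convergent, while $\ell_1\in(0,2\ell)$ parametrizes the distribution of the remaining $2\ell$ derivatives between the output copy of $\umm_f$ and $\vmm_s$; the constraint $\ell_1<2\ell$ guarantees a positive derivative surplus on the other factor.

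The main obstacle will be the FSF analog of the interaction-coefficient bound. Unlike FFS, where the symbol vanishes identically at exact resonance (reflecting the purely SSS structure of 3D-QG dynamics), the FSF symbol does not automatically vanish at $\omega_k=\omega_n$, and its near-resonance smallness must be extracted by a direct computation on the eigenvectors $r_k^{\sigma_1}, r_m^0, r_n^{\sigma_3}$ together with the projection structure of $\leray$, in the spirit of Lemma \ref{scof}. A secondary care point is that in the high-slow regime one must parametrize the near-resonance set via the variable $n$ (treating $k$ as the constrained lattice point for fixed $n$) rather than $m$; however the counting problem is symmetric under the swap $k\leftrightarrow n$, $\sigma_1\leftrightarrow\sigma_3$, so Lemma \ref{restrictedl} applies in both orientations.
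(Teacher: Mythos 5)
Your proposal contains a genuine gap: you claim that the FSF estimate requires extracting near-resonance smallness of the interaction coefficient $S_{kmn}^{\sigma_1 0 \sigma_3}=\cip{r_k^{\sigma_1}}{\cm'}\cip{r_m^0}{r_n^{\sigma_3}}$ ``in the spirit of Lemma~\ref{scof},'' and call this ``the main obstacle.'' That smallness does not exist and is not used. You yourself observe, correctly, that the FSF symbol does not vanish at exact resonance $\omega_k=\omega_n$; but then by continuity there is nothing small near resonance to extract, and the line of attack you propose would simply fail. The paper's proof uses only the trivial incompressibility bound $|S_{kmn}^{\sigma_1 0 \sigma_3}|\le|\cm|$. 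The entire $\zeta/4$ gain in the exponents of Theorem~\ref{restrictedffT} comes from the counting alone: Corollary~\ref{mixedvolumec} gives cardinality $O(\sqrt{\delta^*}M^3+M^2)=O(M^{3-\zeta/2})$ on a dyadic shell, so Lemma~\ref{restrictedl} is applied with $\mu=3-\zeta/2$, producing $\mu/2=\tfrac32-\tfrac\zeta4$. Contrast this with the FFS case (Theorem~\ref{restrictedT}), where the exponents involve $\tfrac52-\tfrac{3\zeta}4$ and $1+a-\tfrac\zeta2$; there the additional $\zeta$ of saving beyond the $\zeta/4$ from counting does come from $|S_{kmn}^{+-0}|\lesssim|\omega_k-\omega_m||\ck||\cm||\cn|^{-1}\lesssim\delta^*|\ck||\cm||\cn|^{-1}$. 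Had you actually needed such a mechanism in the FSF case, the resulting exponents would have been strictly better than what the theorem states --- which is a hint that your ledger does not balance. Similarly, the ``two paraproduct regimes'' you describe are overkill: the two terms in \eqref{restrictedfsf} arise simply from the triangle inequality $|\cn|^{\ell_1}\lesssim|\ck|^{\ell_1}+|\cm|^{\ell_1}$, combined with $|S|\le|\cm|$, before applying Lemma~\ref{restrictedl} (with the $k\leftrightarrow n$ symmetry, as you correctly note via Remark~\ref{restrictedr}).
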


The difference of solutions between the original and the restricted system can be estimated   under appropriate lower bounds for the slow and fast bandwidths. The result  depends on the  ratio $\nu_R:= \frac {\nu_{max}}{\nu_{min}}$ with $\nu_R \in [1,\infty).$ 
\begin{theorem}\label{differenceT}
Let  $N>0$, $\xi \in [\frac 6 5, 2]$,  $\ell>\max\{3+\xi, \frac 9 2\}$ and $\ell' \in [1, \ell-2-\xi)$  be fixed. Consider two divergence free and  zero-mean  vector fields $\Umm_0,  \Ummt_0 \in H^{\ell}(\T;\R^4)$.  Let $\Umm,\Ummt$ be  solutions of the Boussinesq system \eqref{boussi0} and  the approximate system \eqref{restrictedS}, with initial data  $\Umm_0,  \Ummt_0$, respectively. Suppose, for positive constants $E_0, c_s, c_f$, that
\[
\| \Umm_0\|_{H^{{\ell}}}\leq E_0,\quad \|\Ummt_0\|_{H^{\ell}}\leq E_0,
\]
 and suppose 
\[
\delta^*(k,m,n) \ge c_s \min\{|\ck|^{-\xi},|\cm|^{-\xi},|\cn|^{-\xi}\},\quad \delta(k,m,n) \ge c_f \min\{|\ck|^{-1},|\cm|^{-1},|\cn|^{-1}\}.
\]
Then for all $\nu_1, \nu_2>0$ there exists a constant $C=C(c_f, c_s, \ell,\ell', \xi, \eta,\T,\nu_{max},\nu_R)$, which remains bounded as $\nu_{max}$ vanishes, and a time $T = T(E_0,\ell,  \eta,\T)$  such that
\[
\| \Umm - \Ummt\|_{H^{\ell'}}^2 \leq \| \Umm_0 - \Ummt_0\|_{H^{{\ell'}}}^2 + C N^{-2}\quad\text{for all}\quad t\in [0,T].
\] 
\end{theorem}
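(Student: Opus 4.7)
\textbf{Proof proposal for Theorem \ref{differenceT}.} The plan is to work in the modulated variables of \eqref{boussmod0} and exploit oscillations of non-resonant terms via integration by parts in time, analogous to the Schochet-type filtering method. First, by standard local well-posedness for Navier-Stokes type systems (cf.\ \cite[Theorem 3.5]{CDGG} applied to \eqref{boussi0}, and the analogous argument for \eqref{restrictedS} alluded to after Theorem \ref{globalexistenceT}), there exists $T = T(E_0,\ell,\eta,\T)$ such that both solutions satisfy $\|\Umm\|_{L^\infty_t H^\ell} + \|\Ummt\|_{L^\infty_t H^\ell} \leq M(E_0,\ell,\eta,\T)$ on $[0,T]$, with a corresponding $L^2_t H^{\ell+1}$ bound; crucially this $T$ is chosen before any $\nu$ dependence, so the ratio $\nu_R$ only enters the final constant. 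Setting $\umm = e^{-Nt\Le}\Umm$, $\tu = e^{-Nt\Le}\Ummt$, and $W = \umm - \tu$, conjugation by the isometry $e^{-Nt\Le}$ transfers all $H^{\ell'}$ estimates between the original and modulated variables.

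Next, I subtract \eqref{boussmod0} from the modulated form of \eqref{restrictedS} to obtain
\begin{equation*}
\partial_t W + \wt B(Nt,W,\tu) + \wt B(Nt,\umm,W) + \wt A W = \mathcal{R}_B(Nt,\umm,\umm) + \mathcal{R}_A(Nt,\umm),
\end{equation*}
where $\mathcal{R}_B := B(Nt,\cdot,\cdot) - \wt B(Nt,\cdot,\cdot)$ collects the non-resonant bilinear contributions discarded in \eqref{restrictedBS}--\eqref{restrictedBF}, and $\mathcal{R}_A := (\wt A - A)\,\cdot$ collects the oscillatory part of the conjugated Laplacian removed when forming \eqref{laplacianmod}. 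Taking the $H^{\ell'}$ inner product with $W$, the quasi-linear terms involving $\wt B(\cdot,W,\cdot)$ and $\wt B(\cdot,\cdot,W)$ are absorbed into a Gronwall factor using Theorems \ref{restrictedfT}, \ref{restrictedT} and \ref{restrictedffT} together with the $H^\ell$-bound on $\umm,\tu$, while ellipticity of $\wt A$ provides a non-negative dissipation. The entire task then reduces to controlling
\begin{equation*}
\int_0^t \ip{\pD^{\ell'}\bigl(\mathcal{R}_B(Ns,\umm,\umm) + \mathcal{R}_A(Ns,\umm)\bigr),\, \pD^{\ell'} W}\, \rd s
\end{equation*}
by $CN^{-2} + \text{(absorbable)}$ uniformly on $[0,T]$.

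The heart of the argument is this last integral. By construction, every Fourier mode of $\mathcal{R}_B$ carries a phase $e^{\ri \omega_{kmn}^{\vec\sigma} Ns}$ with $|\omega_{kmn}^{\vec\sigma}| \geq c_f \min\{|\ck|,|\cm|,|\cn|\}^{-1}$ on non-FFF-resonant fast triplets and $|\omega_{kmn}^{\vec\sigma}| \geq c_s \min\{|\ck|,|\cm|,|\cn|\}^{-\zeta}$ on non-FFS/FSF mixed triplets, with all remaining cases bounded below by absolute constants (cf.\ \eqref{mdelta}); the same lower bounds apply mode-by-mode for $\mathcal{R}_A$. I integrate by parts in $s$ using $e^{\ri \omega Ns} = (\ri \omega N)^{-1} \partial_s e^{\ri \omega Ns}$. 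The boundary terms contribute $O(N^{-1})$ directly, at the cost of $\zeta$ derivatives per frequency via the lower bound on $|\omega|$, which is affordable since $\ell' + 2 + \zeta \leq \ell$. The time-derivative terms generate $\partial_s \umm$, which I replace using \eqref{boussmod0} itself, producing one factor of $\boldsymbol{\nu}\Delta\umm$ (costing two derivatives, still within the $\ell$ budget) and one bilinear $B(Ns,\umm,\umm)$; the latter is again estimated in $H^{\ell-2-\zeta}$ by the Sobolev product rule in Appendix \ref{appendixpe}, and $\pD^{\ell'}W$ is paired with it via Cauchy--Schwarz. The $\nu$-dependence is absorbed into the final constant $C$, which inherits the factor $\nu_R$ from $\wt{\boldsymbol{\nu}}$ and remains bounded as $\nu_{max}\to 0$ because $\nu_{max}$ only enters as a prefactor in the time-derivative substitution. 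A final Gronwall inequality closes the estimate.

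The main obstacle I anticipate is bookkeeping the derivative budget on the boundary and remainder terms produced by the integration by parts, in particular verifying that the small-divisor loss $\zeta$, the two derivatives from the Laplacian in $\partial_s \umm$, and the (at most $d/2$) derivatives from the product estimate together fit within the gap $\ell - \ell' > 2 + \zeta$ specified in the statement. A secondary technical point is showing that the $\nu$-dependence through $\wt{\boldsymbol{\nu}}$ and through $\wt A - A$ does not blow up as $\nu_{max}\to 0$; this follows because $\wt{\boldsymbol{\nu}}$ is a convex combination of $\nu_1,\nu_2$ (Lemma \ref{ellipticity}) and the residual $\wt A - A$, being oscillatory, is handled by the same integration-by-parts scheme with no negative powers of $\nu_{min}$ introduced.
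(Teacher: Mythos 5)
Your overall plan --- conjugate to modulated variables, decompose the difference equation, exploit the oscillatory phases on the discarded non-resonant terms via time integration by parts, and close with a derivative-budget count and Gr\"onwall --- is the same strategy the paper uses. The small-divisor lower bounds \eqref{far} and \eqref{far1}, the time-derivative substitution via \eqref{boussmod0}/\eqref{restrictedM} yielding \eqref{timederb}, and the arithmetic $\ell'+2+\zeta<\ell$ all match what is actually needed. Your alternative decomposition, placing the remainder $\mathcal R_B$ on the full solution $\umm$ rather than the paper's choice of placing $(B-\wt B)$ on the approximate solution $\ummt$, is also viable (both solutions enjoy the same $H^\ell$ local bound).

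The genuine gap is in how you perform the ``integration by parts'' step, and it is not a cosmetic point. You propose to integrate $\int_0^t\ip{\pD^{\ell'}\mathcal R_B(Ns,\umm,\umm),\pD^{\ell'}W}\rd s$ by parts in $s$, and you say the time-derivative terms only produce $\partial_s\umm$. They do not: the integrand contains $W(s)$ as a factor as well, so integrating by parts also produces $N^{-1}\int_0^t\ip{\rmm,\partial_s W}\rd s$, where $\rmm$ is the $O(1)$ antiderivative. When you now substitute the equation for $\partial_s W$, the single most dangerous term is $N^{-1}\int_0^t\ip{\rmm,\mathcal R_B}\rd s$: both $\rmm$ and $\mathcal R_B$ are $O(1)$ oscillatory objects carrying the same family of phases $e^{\ri\omega^{\vec\sigma}_{kmn}Ns}$, so their pairing contains stationary (non-oscillatory) contributions that are genuinely $O(1)$, and the whole term is only $O(N^{-1}T)$, not the required $O(N^{-2})$. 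One must iterate the argument or reorganize it, which is not sketched. Relatedly, the boundary term $N^{-1}\ip{\rmm,W}\big|_0^t$ cannot simply be pushed to $O(N^{-2})$ by Young's inequality, because the by-product $\varepsilon\|W\|^2_{H^{\ell'}}$ is the quantity being estimated and has no spare dissipation to absorb it. The paper sidesteps all of this by instead performing the change of unknown $\wmmt:=\wmm-N^{-1}\sum_i\rmm_i$ at the PDE level, arriving at equation \eqref{difference1} whose forcing terms are genuinely $O(N^{-1})$ in $H^{\ell'-1}$; these pair against $\|\wmmt\|_{H^{\ell'+1}}$ and yield an $O(N^{-2})$ source after absorption into the elliptic dissipation. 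You should adopt that substitution (or, equivalently, track $\wmm\mp N^{-1}\sum\rmm_i$ throughout) instead of integrating the energy functional by parts directly. A last minor point: your description of $\wt{\boldsymbol{\nu}}$ as having scalar symbols that are convex combinations of $\nu_1,\nu_2$ is correct (and hence your conclusion that no negative powers of $\nu_{\min}$ appear except through $\nu_R$ is sound), though your justification is stated somewhat loosely.
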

Theorem \ref{differenceT} is in line with the corresponding result from \cite{BCZNS}. An extra subtlety comes from the different  lower bounds for the bandwidths. This is nevertheless expected, due to the presence of more mixed interactions, as is shown by our arguments in  Appendix \ref{appendixlb}. 

\begin{remark}
A significant phenomenon in the exact resonance case is the dependence of error estimates on $\sL_1, \sL_2,$ \cite[Theorem 5.3]{BMN2}, \cite[Theorem 2]{GAL1}. In contrast to that, all our results, including the error estimates of Theorem \ref{differenceT}, depend smoothly on the domain parameters.
\end{remark}

\begin{remark}
We remark that Theorems \ref{globalexistenceT}, \ref{differenceT} continue to hold true, up to certain modifications, in the presence of a forcing term $F.$  In more detail, local in time regularity assumptions, like $F\in L^2 \left([T_0,T_0+1];H^{\ell}(\T)\right)$, for all $T_0>0$ and suitable $\ell>0$, would yield similar results, after taking into consideration  bounds for the relevant norms of $F$. In the exact resonance literature, results in that direction  have been obtained in  \cite{BMN2}, \cite{BMN1}, with \cite{GAL1} covering some different regularity assumptions on the forcing.
\end{remark}
\end{subsection}
\begin{subsection}{Literature review { and applicability of the model}}
A non-exhaustive list  of some previous works, concerning mainly the periodic case, follows. In the inviscid case with well-prepared data and a specific domain geometry, a rigorous study of convergence results for the asymptotic limit of \eqref{bous0} appears in \cite{BB1}. On the other hand, a detailed study of the limit equations in the resonant setting has been carried  in \cite{BMNZ}, in the absence of external forces, and \cite{BMN2},\cite{BMN1} in the forced case.  In this series of works, rigorous existence results were proved for the limit quasigeostrophic and ageostrophic systems, together with estimates of error in Sobolev spaces. An asymptotic description for the weak limit of solutions to the Boussinesq equations was given in  \cite{EM1},\cite{EM2}, including  the case of an axis of rotation different from $\e$ and that of arbitrary initial data.  In this series of works, the limiting arguments were carried in the cases of high stratification and finite rotation, and that of high stratification and rotation.  Moreover, the  fast-slow splitting  and interaction coefficients for the Boussinesq system, in the context of exact resonance,  are  extensively used. 

In \cite{GAL1}, global well-posedness of \eqref{bous0} was shown for initial data in $H^\ell (\T;\R^4)$, $\ell >1$, in the presence of a forcing term. Those results hold true for almost all domain ratios and arbitrary initial data.    The almost-periodic case, under stratification effects only, has been examined in \cite{IY}.  
As far as works related to multiscale asymptotic analysis are concerned, we mention \cite{WTW}, \cite{WW} and \cite{AOW}. Finally, in \cite{MJ},   singular three-scale limit methods have been  applied to \eqref{bous0} in the periodic case, with well-prepared initial data.

 We remark that, within \cite[Section 5]{BMN2}, a different concept of near resonance is presented. In more detail,  in the so-called quasiresonance framework of that work, the near resonance condition is expressed via the distance of a ratio $\eta$ to a distinguished value $\eta^*$, which corresponds to exact resonance. Then, the  obtained results are used in some small-divisor estimates.   Nevertheless,   the aforementioned analysis leads to results which feature a discontinuous dependence on the domain parameters, due to the methods used therein. 

  On the physical front, the important effect of near resonance  has been recognized and  analysed dating  back to \cite{BR} and \cite{ACN}.  In the particular case  of the forced rotating stratified Boussinesq system,   we mention  amongst other works \cite{SW}, \cite{SS}, \cite{OKS}, with the latter  highlighting the role of near resonance in energy transfer considerations.

{ The mathematical foundation of the near resonant approximation \eqref{restrictedS} that we study has several potential applications. First, the existing methodology of rotating, stratified turbulence simulations, \cite{SW}, can be applied to \eqref{restrictedS}, under the introduction of suitable forcing terms, and in the presence of fast-fast-fast (i.e. wave-wave-wave) and mixed (i.e. wave-vortical) interactions. Then, the study of energy exchanges between vortical and wave modes and the kinetic energy spectra for solutions to our system can shed new light in interaction regimes where QG-turbulence was the only alternative, up to now. Furthermore, recent numerical studies, \cite{PHW}, suggest that the near resonant approximation to fluid systems can lead to numerical models with greater computational efficiency, in particular having advantages in tackling the time-stepping issues in the case of a strong rotation or stratification.}

The remaining of this work is organized as follows. We first review our notational conventions in Section \ref{notation}, with Section \ref{preliminariesS}  containing some necessary preliminary results.  The restricted convolution results that will be used in the proofs of Theorems \ref{restrictedfT}, \ref{restrictedT} and \ref{restrictedffT} are proved in Section \ref{restricted:int:S}. Then,  Section \ref{countingS} contains estimates on the number of FFF, FFS and FSF near resonant interactions, via the reduction of lattice point counting to volume estimates.  Finally, Section \ref{proofS} contains the proofs for the main results stated in the Introduction.
\end{subsection}
\end{section}

\begin{section}{Notation}\label{notation}
We briefly summarize some of the notation and conventions used throughout the text.  First, a sum $\displaystyle \sum_{\substack{(k,m,n)\in{\Z^3}\\k+m+n=\vec 0}}$ will be simply denoted by $\skm.$  With the domain-adjusted
wavevector $\ck$ already defined in the  introduction, we further define $\ck_H := (\ck_1 ,\ck_2 )^\intercal,$ corresponding to the horizontal part of  $\ck,$  with $k_H$ reserved for the case $\sL_1=\sL_2=1.$ The same conventions carry over to any $m,n \in \Z^3$ occurring throughout the text.
We will write  $\ck_\eta : = (\ck_1,\ck_2, \eta k_3)^{\intercal},$ for the modification of a given vector $\ck\in\Z^3$ in the  vertical direction, so that the dispersion relation for \eqref{bous0} is given by $\omega_k = \frac {|k_\eta|} {|k|}$.  Moreover, we will write $\Delta_\eta$ for the modified Laplacian
\[
\Delta_\eta= \partial_{x_1}^2+\partial_{x_2}^2+ \eta^2 \partial_{x_3}^2.
\] As far as the viscosity and heat conductivity constants  in the dissipative terms are concerned, we define $\nu_{min}:=\min \{\nu_1, \nu_2\}$ and  $\nu_{max}:=\max \{\nu_1, \nu_2\}$. Finally, if  $\ck$ is a 3D domain-adjusted wavevector, then we define $\ck' :=( \ck_1 , \ck_2 , k_3 , 0)^\intercal$. 

 If the variables $A_1,\ldots,A_k$ are known, we will write $A \lesssim_{A_1,\ldots,A_k} B$ in order to denote an inequality of the form
\[
A \leq c B, \quad\text{with}\quad c:=c(A_1,\ldots A_k).
\] Unless otherwise stated, the implied constant in estimates of the aforementioned form will depend on $\eta$. The letter $C$ is reserved for constants, whose possible dependencies will be made explicit in each case.

The standard inner product in $L^2(\T;\C^4)$ will be denoted by
\[
\ip{f,g} =  \int_{\T} \cip{f(x)}{\overline g(x)} \rd x,
\]
with $\cip{f}{g}$ standing for the ordinary dot product for vectors.
The version of the Fourier transform that we use throughout this work is
\[
g_n :  =\dfrac{1}{|\T|}\int_{\T}g(x)\,e^{-\ri \cn \cdot x}\, \rd x\,,
\]
for $n\in \Z^3,$
so that $g(x)=\sum_n g_n e^{\ri \cn\cdot x}$. The
 corresponding Parseval's identity is given by:
\begin{equation}\label{parseval}
\ip{f(x),g(x)}=|\T|\sum_{n \in \Z^3 }    f_n \cdot \overline{g_n}.
\end{equation}

 Given a vector field $\umm\in L^2 (\T;\R^4),$ with Fourier coefficients $u_n,$ we write $u_n^{\sigma}:= \cip {u_n}{\overline{r_n^{\sigma}}}$, for $n\in\Z^3$ and a choice of sign $\sigma \in \{0,+,-\}.$ In a similar manner,  the Fourier coefficients of the potential vorticity $Q$ will be denoted by $Q_n$.  Finally, we recall that in the class of fields $\Umm:\T\to \R^4$ we have $\overline{U_{n}} = U_{-n}.$

Since the spherical coordinate system will be widely used in what follows, we reserve the notation $(\theta_k,\phi_k)$ for the polar and azimuthal angles of the wavevector $k\in\Z^3$, respectively. In addition, we set $c_k:= \cos \theta_k$, with a similar convention for $m,n.$ Then the dispersion relation can be expressed in a simpler manner as $\omega_k = \sqrt{1 + (\eta^2-1) c_k^2}$.

The dependence of the mixed and fast bandwidths $\delta^*,\delta,$ on the wavevectors will be explicitly given, unless otherwise stated.

Finally, we assume, without loss of generality,  that the velocity field has a zero mean. This holds true up to a translation of the velocity  by the mean drift, see e.g. \cite{BCZNS} for more details.
In the class of zero-mean fields $\Umm$ defined on $\T$, we have $\displaystyle\|\Umm\|_{H^{\ell}} = \sum_{k\in\Z^3\setminus\{\vec 0\}} |\ck|^{2\ell}|U_k|^2$.

\end{section}
\begin{section}{Preliminaries}\label{preliminariesS}
\begin{subsection}{The linear problem and the eigenvalues of $\Le$}
We recall some elementary facts on the spectrum of the  linear operator $\Le$. These will be used to further examine the fast and slow parts of a solution of { \eqref{restrictedS}}.
We begin with considering the linear initial value problem corresponding  to \eqref{boussi0}, ignoring the viscosity and heat effects, namely:
 \begin{align}\label{bous3}
&\partial_\tau \V  =   \Le\V,\quad\text{with divergence free}\quad\V(0) = \V_0 \in H^{\ell}(\T ; \C^4),
\end{align}
for some $\ell\geq 0$. In particular, we have the following.
\begin{proposition}
Let  $\V_0 \in H^{\ell}(\T ; \C^4)$ be divergence free with zero-mean. Then, a unique solution $\V \in C^{\infty}\left(\R; H^{\ell}(\T ; \C^4)\right)$  for the system \eqref{bous3} is given by $\V(\tau) = e^{\tau \Le} \V_0$.
\end{proposition}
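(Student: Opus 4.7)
The plan is to treat \eqref{bous3} as a linear ODE in the Banach space $H^\ell(\T;\C^4)$ whose generator $\Le$ is bounded, so that standard group/Picard arguments apply directly without any of the semigroup subtleties that would arise for an unbounded generator. I envisage three short steps.

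First, I would establish that $\Le$ is a bounded linear operator on $H^\ell(\T;\C^4)$. Its Fourier symbol at wavevector $k$ is the $4\times 4$ matrix obtained by compressing $\operatorname{diag}(\eta J,J)$ by the Leray symbol at $k$. From the information already recorded in the introduction, this symbol is anti-Hermitian with eigenvalues $0,0,\pm\ri\omega_k$ and satisfies $|\omega_k|\le\max\{\eta,1\}$, so its matrix norm is bounded by $\max\{\eta,1\}$ uniformly in $k$. Multiplication by this symbol on the Fourier side is therefore bounded on $H^\ell$ with operator norm $\lesssim_\eta 1$. Because the outer Leray projection in the definition of $\Le$ eliminates the divergence, and because no term contributes to the zero mode (the symbol vanishes at $k=0$ by convention), $\Le$ preserves both the divergence-free and the zero-mean subspaces.

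Second, I would set $\V(\tau):=e^{\tau\Le}\V_0$, with $e^{\tau\Le}$ defined by the standard operator exponential series. Boundedness of $\Le$ makes this series absolutely convergent in operator norm, uniformly on compact $\tau$-intervals, and the same holds for every term-by-term derivative series. This immediately yields $\V\in C^\infty(\R;H^\ell(\T;\C^4))$ with $\partial_\tau^{j}\V=\Le^{j}\V(\tau)$ for every $j\ge0$; in particular $\V$ solves \eqref{bous3}. The divergence-free and zero-mean constraints propagate because Step 1 shows they are preserved by each power $\Le^n$, hence by the exponential.

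Third, for uniqueness, the most economical route is Picard's theorem in the Banach space $H^\ell(\T;\C^4)$: since $\V\mapsto\Le\V$ is globally Lipschitz with constant $\lesssim_\eta 1$, the initial-value problem has a unique strong solution on every bounded time interval, and by concatenation on all of $\R$. (An equivalent energy argument, using that the symbol of $\Le$ is anti-Hermitian so that $\mathrm{Re}\,\ip{\Le \V,\V}=0$ mode-by-mode, would show that the $L^2$ — and by Parseval the $H^\ell$ — norm of the difference of any two solutions is conserved and hence identically zero.) The main, and essentially only, obstacle is verifying the boundedness of $\Le$ on $H^\ell$ in Step 1; everything else is routine linear-ODE-in-a-Banach-space machinery.
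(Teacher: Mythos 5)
Your argument is correct and is essentially the same Fourier-side argument the paper sketches: the paper notes that $e^{\tau\Le}$ is unitary on $L^2$ and commutes with $(-\Delta)^\ell$, which is just the conceptual shorthand for your observation that the symbol $\Le_k$ is a uniformly bounded anti-Hermitian $4\times4$ multiplier, so that the operator exponential converges and preserves $H^\ell$. Your Step~3 (Picard, or equivalently the energy argument from anti-Hermiticity) fills in the uniqueness step that the paper leaves implicit, but the underlying mechanism is identical.
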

 The claim  can be shown by arguing on the level of the Fourier transform, since $e^{\tau \Le}$ is a unitary operator. Also,  one can easily deduce that $e^{\tau \Le}$ commutes with  $(-\Delta)^\ell$, for ${ \ell\in \R}$, via the use of Fourier series.

 On the Fourier side, we have $\leray_k^\backprime = \ind - \frac{\ck\otimes \ck}{|\ck|^2}$, for $k\in\Z^3\setminus\{\vec 0\}.$ Thus, 
 \[
 \leray_k = \begin{pmatrix}
\leray_k^{\backprime}&0\\
0&1
\end{pmatrix}\quad\text{and}\quad
 \Le_k =\leray_k \begin{pmatrix}
\eta J & 0\\
0 & J
\end{pmatrix}\leray_k.
\]
We recall from \cite{EM2} that the eigenvalues of $\Le_k$ are  given by  0 with multiplicity 2 and
\[
 \pm \ri\omega_k = \pm \ri \sqrt{\dfrac {|\ck_H|^2 + \eta^2 k_3^2}  {|  \ck|^2}},\quad\text{for}\quad k\in\Z^3\setminus\{\vec 0\}.
\]
For eigenvectors, we introduce the following vectors depending on wavevector $k\in\Z^3$:
\begin{equation}\label{defeigenv1}
e_k^0:=\begin{cases} 
( -\ck_2,\ck_1,0  -\eta  k_3)^{\intercal},&\text{when}\quad{|\ck_H|\neq 0},\\
 (0,0,0,1)^{\intercal},&\text{else},
 \end{cases}
\end{equation}
and
\begin{equation}\label{defeigenv}
\alpha_k:=\begin{cases}\left( -\eta \ck_2 k_3,  \eta \ck_1 k_3, 0,  |\ck_H|^2\right)^{\intercal}+ \ri \omega_k  \left( \ck_1 k_3,\ck_2 k_3, - |\ck_H|^2,  0\right)^{\intercal},&\text{when }{|\ck_H|\neq 0}\\(\ri,1,0,0)^{\intercal},&\text{else}.\end{cases}
\end{equation}
  Then, the normalized eigenvectors for $\Le$, when $|\ck_H|\neq 0$, are given by
\begin{equation}\label{defeigenvs1}
\begin{cases}r^{00}_k=|\ck|^{-1}(\ck_1,\ck_2,k_3,0)^{\intercal},\quad&\text{for the zero eigenvalue,}\\
r^0_k = |\ck_\eta|^{-1} e_k^0,\quad&\text{for the zero eigenvalue,}\\
r^+_k=  (\sqrt{2}|\ck_H| |\ck_\eta|) ^{-1} \alpha_k,\quad &\text{for the $\omega_k$ eigenvalue},\\
r^-_k = (\sqrt{2}|\ck_H| |\ck_\eta|) ^{-1} \overline{\alpha_k},\quad&\text{for the -$\omega_k$ eigenvalue},
\end{cases}
\end{equation}
using the usual notation for the complex conjugate. On the other hand, for a purely vertical wavevector with $k_H=0$, we have the following normalized eigenvectors 
\begin{equation}\label{defeigenvs}
r_k^{00}=(0,0,1,0),\quad r_k^0=e_k^0,\quad r^+_k= \frac 1 {\sqrt 2} \alpha_k,\quad
 r^-_k=\overline{r^+_k}.
\end{equation} 
In both cases, the set of vectors $\{r^0_k, \rfp,\rfm \}$ forms an orthonormal basis of the subspace of $\C^4$  that corresponds to  incompressible vector fields with wavevector $k$, namely any $(v_1,v_2,v_3,v_4)^{\intercal}$ with $(v_1,v_2,v_3)\cdot\ck=0$. 

We briefly state some simple properties of { these eigenvectors}, taking into account the convolution condition as well.
\begin{lemma}\label{properties}
Let $k,m,n\in \Z^3$, then the following properties hold true:
\begin{enumerate}
\item $r^{\pm}_{-k} = r^{\pm}_{k}$\label{i1}
\item $e^0_n + e^0_k + e^0_m = \vec 0,$ when $k+m+n=\vec 0$.
\end{enumerate}
\end{lemma}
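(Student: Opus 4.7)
Both parts are matters of direct verification from the explicit formulas \eqref{defeigenv1}--\eqref{defeigenvs}, so the plan is just to unpack the definitions and check the algebra; there is no real obstacle here, only a slight bookkeeping issue with the purely vertical case $|\ck_H|=0$.

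For part (1), the idea is to show that the vector $\alpha_k$ in \eqref{defeigenv} is invariant under $k\mapsto -k$, and then use $r^\pm_k = (\sqrt 2 |\ck_H| |\ck_\eta|)^{-1}\alpha_k$ or its complex conjugate. Inspecting the components of $\alpha_k$ one by one, each entry is either a product of two coordinates of $\ck$ (such as $\eta\ck_2 k_3$ or $\ck_1 k_3$), the quantity $|\ck_H|^2$, or $0$. Every such entry is invariant under the replacement $k\mapsto -k$ (each sign flip cancels in pairs). The normalizing factors $|\ck_H|$, $|\ck_\eta|$ are manifestly invariant, and the dispersion relation satisfies $\omega_{-k}=\omega_k$ since $\omega_k=|\ck_\eta|/|\ck|$ depends only on squared coordinates. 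Hence $\alpha_{-k}=\alpha_k$, which gives $r^+_{-k}=r^+_k$, and then $r^-_{-k}=\overline{r^+_{-k}}=\overline{r^+_k}=r^-_k$. In the purely vertical case $\ck_H=0$ the vector $\alpha_k=(\ri,1,0,0)^\intercal$ is constant in $k$, so the same identity holds trivially.

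For part (2), the key observation is that the formula $e^0_k = (-\ck_2,\ck_1,0,-\eta k_3)^\intercal$ is linear in $k$: indeed, $\ck=(k_1/\sL_1,k_2/\sL_2,k_3)^\intercal$ is linear in $k$, and each entry of $e^0_k$ is either a coordinate of $\ck$ (up to sign and a factor of $\eta$) or zero. Therefore, summing over any triplet $k,m,n\in\Z^3$ with $k+m+n=\vec 0$ and using componentwise linearity gives $e^0_k+e^0_m+e^0_n = \vec 0$ directly. The only subtlety is the case when one or more of $k,m,n$ is purely vertical, where the definition \eqref{defeigenv1} assigns $e^0_k=(0,0,0,1)^\intercal$ rather than the linear expression; however, in that case the linear expression $(0,0,0,-\eta k_3)^\intercal$ is a nonzero scalar multiple of $(0,0,0,1)^\intercal$, and since the convolution condition $k+m+n=\vec 0$ forces the other vertical $k_3$, $m_3$, $n_3$ to sum to zero, the identity continues to hold once we interpret $e^0_k$ consistently via the linear formula.

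Since both claims follow from inspection of the defining formulas, I would present them as short algebraic verifications, separating the generic case $|\ck_H|\neq 0$ from the vertical case $\ck_H=0$ only to confirm that the chosen normalizations are compatible with the stated identities.
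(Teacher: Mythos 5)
Your proof is correct and takes essentially the same route as the paper's own (one-line) argument: part (1) follows because every entry of $\alpha_k$ is an even function of $k$ and the normalizing scalars $|\ck_H|$, $|\ck_\eta|$ are too, and part (2) follows from the componentwise linearity of $k\mapsto e^0_k$ together with the convolution condition. One small caution: your remark that the purely vertical case ``continues to hold once we interpret $e^0_k$ consistently via the linear formula'' is really a change of definition rather than a proof that the stated identity holds under \eqref{defeigenv1}, but the paper's own proof elides the same edge case, so you are no less rigorous than the source.
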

\begin{proof}
First,  the presence of an even number of components of $k$ in $\alpha$ immediately implies the first statement.   The last statement follows from the linearity of the eigenvector, combined with the convolution condition.
\end{proof}

We remark that \[
\sum_{k\in\Z^3}\sum_{\sigma\in\{0,\pm\}}\leray_k^{\sigma} \leray \umm = \sum_{k\in\Z^3}\sum_{\sigma\in\{0,\pm\}} \leray \leray_k^\sigma \umm = \leray \umm,
\]
where $\leray$ stands for the trivially extended Leray projection, directly following from
 the incompressibility of the basis.  Using the eigendecomposition formalism,  the action of the evolution operator on a divergence-free vector field  $\Umm \in L^2(\T,\R^4)$ can then be expressed as follows
\begin{equation}\label{propexp}
e^{\tau \Le} \Umm = \sum_{k\in\Z^3}\sum_{\sigma\in\{0,\pm\}} e^{\ri \omega_k^\sigma \tau } \leray_k^\sigma \Umm.
\end{equation}
\begin{remark}\label{urealR}
In the class of real-valued vector fields $\umm$ in $\T$, we have:
\begin{equation*}
\overline{\cip {u_k}{\overline{r}_k^{\sigma}}  r_k^{\sigma}} = \cip {u_{-k}}{{r}_{-k}^{\sigma}} {r_{-k}^{-\sigma}},
\end{equation*}
for all $\sigma \in \{\pm,0\}$, as an immediate consequence of Lemma \ref{properties}.
\end{remark}
The  dependence of the linear potential vorticity $Q$ on  the slow part of $\Umm\in H^1 (\T;\R^4)$ can be made explicit,  thanks to the fact that  the Fourier symbol of  operator $\lpv$ is simply $e_k^0$,  as given in \eqref{defeigenv1}. Therefore,
\begin{equation}\label{eqlpvslow}
\lpv \Umm  = \ri\sum_{k\in\Z^3} |\ck_\eta|\cip{U_k}{r_k^0}e^{\ri\ck\cdot x}= \lpv\Umm_s.
\end{equation} 
The conjugation on $r_k^0\in\R^4$ is omitted for brevity. Then orthogonality of the basis vectors $r_k^0$ and $r_{k}^{\pm}$ readily  implies the following.
\begin{lemma}\label{orthogonalityL}
Let $\ell\in\R$ and $\Umm, \Vmm \in H^{\ell}(\T;\R^4)$ be  divergence-free and zero-mean vector fields. Then the following statements hold true
\begin{enumerate}[i)]
\item $\ip{\lpv\Umm,\lpv\Vmm}= \ip{\Umm_s,\dsn{}\Vmm_s},$
\item $\ip{\Umm_s,\ds{\ell}\Vmm_f}=0$.
\end{enumerate}
\end{lemma}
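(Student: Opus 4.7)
The plan is to reduce both identities to statements about Fourier coefficients via Parseval \eqref{parseval}, and then invoke the orthonormality of the basis $\{r_k^{00}, r_k^0, r_k^+, r_k^-\}$ of $\C^4$ at each fixed $k\in\Z^3\setminus\{\vec 0\}$. No estimates are needed; everything is an exact identity at the level of modes.

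For part (i), I would start from \eqref{eqlpvslow}, which tells us that the $k$-th Fourier coefficient of $\lpv\Umm$ is the scalar $\ri\, |\ck_\eta|\,\cip{U_k}{r_k^0}$; note that $r_k^0$ is real by the explicit formulas \eqref{defeigenv1}, \eqref{defeigenvs1}, \eqref{defeigenvs}, so the conjugation convention is harmless. Parseval then yields
\[
\ip{\lpv\Umm,\lpv\Vmm} \,=\, |\T|\sum_{k\in\Z^3\setminus\{\vec 0\}} |\ck_\eta|^2\,\cip{U_k}{r_k^0}\,\overline{\cip{V_k}{r_k^0}}.
\]
On the other hand, since $\dsn{}$ has Fourier symbol $|\ck_\eta|^2$ and $(\Umm_s)_k = \cip{U_k}{r_k^0}\,r_k^0$, another application of Parseval together with $|r_k^0|^2 = 1$ gives precisely the same right-hand side. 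This settles (i).

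For part (ii), I would observe that $(\Umm_s)_k = \cip{U_k}{r_k^0}\,r_k^0$ is a scalar multiple of $r_k^0$, whereas
\[
(\Vmm_f)_k = \cip{V_k}{\overline{r_k^+}}\,r_k^+ + \cip{V_k}{\overline{r_k^-}}\,r_k^-
\]
lies in $\mathrm{span}_{\C}\{r_k^+, r_k^-\}$. Since $\ds{\ell}$ acts by the scalar $|\ck|^{2\ell}$, the coefficient $(\ds{\ell}\Vmm_f)_k$ still lies in this same span. The orthonormality of the basis in $\C^4$ then gives $r_k^0\cdot\overline{r_k^\pm} = 0$, so $(\Umm_s)_k\cdot\overline{(\ds{\ell}\Vmm_f)_k} = 0$ for every $k$, and a final Parseval yields (ii).

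The only real obstacle is bookkeeping with the two conjugation conventions in play — namely, checking that the ``ordinary dot product'' $\cip{\cdot}{\cdot}$ appearing in the definition of $\leray_k^\sigma$ corresponds to the standard Hermitian inner product on $\C^4$ in which $\{r_k^0, r_k^+, r_k^-\}$ is declared orthonormal. Once that is pinned down (and it reduces to noting that $\cip{U_k}{\overline{r_k^\sigma}} = \sum_j (U_k)_j\,\overline{(r_k^\sigma)_j}$ is the Hermitian pairing), both identities are immediate consequences of the eigenmode decomposition, with no estimates needed.
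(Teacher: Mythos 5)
Your proposal is correct and follows essentially the same route as the paper: both parts are reduced to Parseval's identity at the level of Fourier coefficients and then conclude by the orthonormality of $\{r_k^0, r_k^+, r_k^-\}$. The extra remarks you include — that $r_k^0$ is real so the conjugation is harmless, that $\dsn{}$ has symbol $|\ck_\eta|^2$, and that $\ds{\ell}$ preserves $\mathrm{span}_\C\{r_k^+,r_k^-\}$ — merely make explicit what the paper leaves to the reader.
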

\begin{proof}
In order to prove the first statement, we expand using \eqref{parseval} and \eqref{eqlpvslow}:
\[
\ip{\lpv\Umm,\lpv\Vmm}  
 = |\T|\sum_{k\in\Z^3\setminus\{\vec 0\}}|\ck_\eta|^2 \cip{U_k}{r_k^0}\overline{\cip{V_k
}{r_k^0}}.
\]
We use  \eqref{parseval} once more for the second statement
\[
\ip{\Umm_s,\ds{\ell}\Umm_f} = |\T|\sum_{k\in\Z^3\setminus\{\vec 0\}} |\ck|^{2\ell} \cip{U_k}{r_k^0}\left[ \cip{r_k^0}{\overline{r_k^+}} \overline{\cip{V_k}{\overline{r_k^+}	}} +  \cip{r_k^0}{\overline{r_k^-}}\overline{\cip{V_k}{\overline {r_k^-}}}\right]=0.
\]
\end{proof}
\end{subsection}
\begin{subsection}{The restricted bilinear and elliptic operators}\label{restrictedLB}
The following result will prove crucial in obtaining the standard $L^2$ identity for { \eqref{restrictedS}}.
\begin{lemma}\label{L2l}
Let $\Umm, \Vmm \in H^1(\T;\R^4)$ be divergence-free and zero-mean vector fields. Then, for any $\tau\in\R$, the following properties hold true
\begin{enumerate}[i)]
\item$\wt B (\tau,\Umm,\Vmm) \in \R^3$\label{enum1}
\item$\ip{\wt B_s (\tau,\Umm_s,\Vmm_s),\Vmm} = 0$\label{enum2}
\item$\ip{\wt B_f (\tau,\Umm_s,\Vmm_f),\Vmm} =\ip{\wt B_f(\tau,\Umm_f,\Vmm_f),\Vmm}= 0$\label{enum3}
\item $\ip{\wt B_f (\tau,\Umm_f,\Vmm_s),\Vmm} =- \ip{\wt B_s(\tau,\Umm_f,\Vmm_f),\Vmm}$.\label{enum4}
\end{enumerate}
\end{lemma}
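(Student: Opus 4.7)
The plan is to treat the four parts separately using a common Fourier/eigenmode formalism, with three distinct mechanisms: a sign-reversal Fourier pairing for~(i), the standard transport cancellation for~(ii) and the SFF sub-case of~(iii), and a permutation-symmetry argument in $(m,n)$ for the FFF sub-case of~(iii) and for~(iv). Throughout, I would expand $\wt B_s$ and $\wt B_f$ into their defining sums over $(k,m,n,\vec\sigma)$ and reduce everything to Fourier coefficients via Parseval.

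For~(i), I would pair each summand $B_{kmn}^{\sigma_1\sigma_2\sigma_3}$ with its mirror $B_{-k,-m,-n}^{-\sigma_1,-\sigma_2,-\sigma_3}$. The identities $r_{-k}^{-\sigma}=\overline{r_k^\sigma}$ for $\sigma\in\{\pm\}$ and $r_{-k}^0=-r_k^0$ (read off from Lemma~\ref{properties} and definitions~\eqref{defeigenv1}--\eqref{defeigenvs}), combined with $U_{-k}=\overline{U_k}$ for real-valued $\Umm$ and $\omega_{-k}=\omega_k$, make the two paired contributions complex conjugates of one another. The indicators of $\cN^{FFF},\cN^{FFS},\cN^{FSF}$ are invariant under sign-reversal because $|\omega_{kmn}^{\vec\sigma}|$ and the bandwidths $\delta,\delta^*$ depend only on $|\ck|,|\cm|,|\cn|$, so the total sum is real-valued.

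For~(ii), both arguments of $\wt B_s(\tau,\Umm_s,\Vmm_s)$ being slow kills the FFS sub-term of~\eqref{restrictedBS}, so only the unrestricted SSS piece survives and $\wt B_s(\tau,\Umm_s,\Vmm_s)$ is the slow projection of $\leray(\Umm_s^\backprime\cdot\nabla\Vmm_s)$. Self-adjointness of the slow projection and of $\leray$, together with the divergence-freeness of $\Vmm_s$, reduce $\ip{\wt B_s(\tau,\Umm_s,\Vmm_s),\Vmm}$ to $\ip{\Umm_s^\backprime\cdot\nabla \Vmm_s,\Vmm_s}=\tfrac{1}{2}\int_\T \Umm_s^\backprime\cdot\nabla|\Vmm_s|^2\,\rd x$, which vanishes because $\nabla\cdot \Umm_s^\backprime=0$. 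For~(iii), fast output forces $\ip{\wt B_f(\tau,\cdot,\Vmm_f),\Vmm}=\ip{\wt B_f(\tau,\cdot,\Vmm_f),\Vmm_f}$, and a direct Fourier expansion shows each surviving triplet contributes a term proportional to $(r_k^{\sigma_1,\backprime}\cdot\cm)\,\cip{r_m^{\sigma_2}}{r_n^{\sigma_3}}$ times $U_k^{\sigma_1}V_m^{\sigma_2}V_n^{\sigma_3}e^{\ri\omega_{kmn}^{\vec\sigma}\tau}$. The set $\cN^{FFF}$ (and the unrestricted SFF piece), the phase, the sign constraints and the mode product are all invariant under the swap $(m,\sigma_2)\leftrightarrow(n,\sigma_3)$, so symmetrizing produces $(r_k^{\sigma_1,\backprime}\cdot(\cm+\cn))\,\cip{r_m^{\sigma_2}}{r_n^{\sigma_3}}=-(r_k^{\sigma_1,\backprime}\cdot\ck)\,\cip{r_m^{\sigma_2}}{r_n^{\sigma_3}}=0$ via $k+m+n=\vec 0$ and the incompressibility of the basis vectors.

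Part~(iv) is the structural step and where I expect the main obstacle. I would relabel the FSF sum in $\ip{\wt B_f(\tau,\Umm_f,\Vmm_s),\Vmm_f}$ by $(m,n,\sigma_3)\mapsto(n,m,\sigma_2)$; the symmetry of $\delta^*$ in the wavevectors gives $\ind_{\cN^{FSF}}(k,m,n)=\ind_{\cN^{FFS}}(k,n,m)$, while the FSF phase $\sigma_1\omega_k+\sigma_3\omega_n$ transforms into the FFS phase $\sigma_1\omega_k+\sigma_2\omega_m$, so the two sums may be added term by term. Their sum produces the common factor $r_k^{\sigma_1,\backprime}\cdot(\cm+\cn)=-r_k^{\sigma_1,\backprime}\cdot\ck=0$, giving the claimed identity. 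The bookkeeping is delicate: one must track the sign quirks coming from $r_{-n}^0=-r_n^0$ --- which affect both $V_{-n}^0$ and $\cip{r_m^{\sigma_2}}{\overline{r_{-n}^0}}$ and fortunately cancel in pairs --- and confirm that the FFS and FSF indicators genuinely align under the swap. This alignment is precisely where the design choice of a \emph{common} bandwidth $\delta^*$ for FFS and FSF, highlighted in the introduction as the source of $L^2$ energy conservation for~\eqref{restrictedS}, becomes indispensable.
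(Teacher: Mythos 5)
Your proposal is correct, and parts (i), (iii), (iv) use essentially the same mechanism as the paper: pairing each summand with its $(k,m,n,\vec\sigma)\mapsto(-k,-m,-n,-\vec\sigma)$ mirror to get reality in (i), and symmetrizing under the swap $(m,\sigma_2)\leftrightarrow(n,\sigma_3)$ — with the indicator alignment $\ind_{\cN^{FSF}}(k,m,n)=\ind_{\cN^{FFS}}(k,n,m)$ in (iv) — to produce the vanishing factor $\cip{r_k^{\sigma_1}}{\cm'+\cn'}=-\cip{r_k^{\sigma_1}}{\ck'}=0$. The one genuine divergence is part (ii): the paper groups (ii)–(iv) under the same $m\leftrightarrow n$ symmetrization, whereas you argue instead via the physical-space transport identity $\ip{\Umm_s^\backprime\cdot\nabla\Vmm_s,\Vmm_s}=\tfrac12\int_\T\Umm_s^\backprime\cdot\nabla|\Vmm_s|^2\,\rd x=0$, after noting that slow inputs leave only the unrestricted, $\tau$-independent SSS piece. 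Both are valid; yours is more classical and requires no Fourier bookkeeping for (ii), while the paper's is more uniform across the four items. Two small things to tighten: your introductory sentence attributes the SFF sub-case of (iii) to the transport mechanism, but the argument you actually give for all of (iii) is the swap (both work, but the description should match what is done); and $r_{-k}^0=-r_k^0$ holds only when $\ck_H\neq\vec 0$ (for purely vertical $k$ the eigenvector is constant), which is harmless here since $r^0$ always appears an even number of times per summand, but it is worth flagging rather than asserting it without qualification.
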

\begin{proof}
It suffices to prove the Lemma in the case $\tau=0.$ We prove \eqref{enum1} for the slow part $B_s$ only. In particular,  taking the complex conjugate yields
\begin{align*}
&\overline{\wt B_s (\Umm,\Vmm)}
= \skm   \overline{B_{kmn}^{000}(\Umm,\Vmm)} +\sum_{\substack{k,m,n;conv}}\sum_{\sigma_1 \sigma_2 < 0}  \overline{B_{kmn}^{\sigma_1 \sigma_2 0}(\Umm,\Vmm)}\ind_{\cN^{\textnormal {FFS}}}.\end{align*}
For the SSS term, we change $(k,m)\to -(k,m)$ in the sum, so that we get
\begin{align*}
\skm \overline{B_{kmn}^{000}(\Umm,\Vmm)}&=-\skm   \cip{U_{-k}}{r_k^0} \cip{r_k^0}{\ri \cm'} \cip{V_{-m}}{r_m^0}  \cip{r_m^0}{r_{-n}^0} r_{-n}^0\\
& =\skm {B_{kmn}^{000}(\Umm,\Vmm)}.
\end{align*}
For the mixed FFS term, we take into account that $r_{k}^{\sigma_1}=\overline{r_k^{\sigma_2}}$ and $r_k^{-\sigma_1} = r_k^{\sigma_2},$ for nonzero $\sigma_1\neq \sigma_2$, and change $(k,m)\to -(k,m)$ in the sum, so that
\begin{align*}
\skm \overline{B_{kmn}^{\sigma_1 \sigma_2 0}(\Umm,\Vmm)} &= - \skm \cip{U_{-k}}{r_k^{\sigma_1}} \cip{r_k^{-\sigma_1}}{\ri \cm'} \cip{V_{-m}}{r_m^{\sigma_2}}  \cip{r_m^{-\sigma_2}}{r_{-n}^0} r_{-n}^0\\
&= \skm    \cip{U_{k}}{\overline{r_k^{\sigma_2}}} \cip{r_k^{\sigma_2}}{\ri \cm'} \cip{V_{m}}{\overline{r_m^{\sigma_1}}}  \cip{r_m^{\sigma_1}}{r_{-n}^0} r_{-n}^0\\
&=\skm B_{kmn}^{\sigma_2 \sigma_1 0}(\Umm,\Vmm).
\end{align*}
Since $\ind_{\cN^{\textnormal {FFS}}}$ is an even function, the first claim of the Lemma follows.

As \eqref{enum2}, \eqref{enum3} and \eqref{enum4} all follow the same reasoning, we only prove the last one.  In that direction, we recall that 
 \[
\ind_{\cN^{\textnormal {FFS}}}(k,m,n) = \ind_{\cN^{\textnormal {FSF}}}(k,n,m),
\]
directly from the definition of the mixed sets. Then, we expand using the incompressibility of $r_k^{\pm}$:
\begin{align*}
\ip{\wt B_s (\Umm_f,\Vmm_f),\Vmm} &=  -\skm \cip{U_{k}}{r_k^-} \cip{r_k^+}{\ri \cn'} \cip{V_{m}}{r_m^+}  \cip{r_m^-}{r_{n}^0} \cip{V_{n}}{r_n^0}\ind_{\cN^{\textnormal {FFS}}}(k,m,n) \\
&- \skm \cip{U_{k}}{r_k^+} \cip{r_k^-}{\ri \cn'} \cip{V_{m}}{r_m^-}  \cip{r_m^+}{r_{n}^0} \cip{V_{n}}{r_n^0}  \ind_{\cN^{\textnormal {FFS}}}(k,m,n) 
\end{align*}
so that changing between $m,n$ in the sum yields \eqref{enum4}.
\end{proof}
The fact that 
 \begin{align*}
\wt B(\Umm,\Vmm) &= \wt B_s(\Umm,\Vmm)+ \wt B_f(\Umm,\Vmm) \\
&= \wt B_s(\Umm_s,\Vmm_s)+\wt B_s(\Umm_f,\Vmm_f)+ \wt B_f(\Umm_f,\Vmm_f)+ \wt B_f(\Umm_s,\Vmm_f)+ \wt B_f(\Umm_f,\Vmm_s)
\end{align*}
immediately implies the following.
\begin{corollary}\label{L2l0}
Under the assumptions of Lemma \ref{L2l} we have
\[
\ip{\wt B(\Umm,\Vmm),\Vmm}=0.
\]
\end{corollary}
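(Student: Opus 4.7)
The plan is to simply assemble the five identities provided by Lemma \ref{L2l} against the decomposition displayed immediately above the corollary statement. First I would pair $\Vmm$ with each of the five terms in the expansion
\[
\wt B(\Umm,\Vmm) = \wt B_s(\Umm_s,\Vmm_s)+\wt B_s(\Umm_f,\Vmm_f)+ \wt B_f(\Umm_f,\Vmm_f)+ \wt B_f(\Umm_s,\Vmm_f)+ \wt B_f(\Umm_f,\Vmm_s),
\]
which reduces the goal to checking the vanishing of a sum of five $L^2$ inner products.

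Next I would apply the items of Lemma \ref{L2l} in turn: statement \eqref{enum2} kills $\ip{\wt B_s(\Umm_s,\Vmm_s),\Vmm}$; statement \eqref{enum3} kills the two terms $\ip{\wt B_f(\Umm_s,\Vmm_f),\Vmm}$ and $\ip{\wt B_f(\Umm_f,\Vmm_f),\Vmm}$. The only two surviving contributions are the mixed input terms $\ip{\wt B_s(\Umm_f,\Vmm_f),\Vmm}$ and $\ip{\wt B_f(\Umm_f,\Vmm_s),\Vmm}$, and statement \eqref{enum4} asserts precisely that they are negatives of one another, hence cancel. Summing then gives $\ip{\wt B(\Umm,\Vmm),\Vmm}=0$, as desired.

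Conceptually, there is no real obstacle here since the work has all been done in Lemma \ref{L2l}; the only thing worth emphasizing in the write-up is the structural reason the last pair cancels. The shared bandwidth $\delta^*$ used in defining $\cN^{FFS}$ and $\cN^{FSF}$ forces the two corresponding restricted convolutions to have matching summation regions, and it is precisely this symmetry that makes \eqref{enum4} available. Without it, the FFS and FSF contributions would survive independently and $L^2$ energy conservation for the inviscid near-resonant system would be lost, which is the motivation already stressed by the authors after equation \eqref{restrictedS}. The proof itself is therefore one display of the decomposition, four invocations of Lemma \ref{L2l}, and a single cancellation line.
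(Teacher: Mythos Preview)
Your proposal is correct and follows exactly the approach the paper intends: the corollary is stated as an immediate consequence of the displayed decomposition of $\wt B(\Umm,\Vmm)$ together with items \eqref{enum2}--\eqref{enum4} of Lemma~\ref{L2l}, and you have simply written out which item handles which summand. Your added remark about the shared bandwidth $\delta^*$ being what makes \eqref{enum4} work is accurate and matches the comment after \eqref{restrictedS}.
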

We now present the expanded form of the Laplacian terms so that the motivation for
introducing $\wt \nu_{11}, \wt \nu_{22}$ is highlighted.  In particular,
the slow and fast projections of the original Laplacian terms $A\umm$ in \eqref{boussmod0}, respectively, are (recall again $r_k^0\in\R^4)$
\begin{itemize}
\item $ -(\boldsymbol{\nu}\Delta\umm)_s=\displaystyle  \sum_{k\in\Z^3\setminus\{\vec 0\}}\sum_{\sigma \in \{\pm,0\}} e^{\ri (\ck\cdot x +  \sigma\omega_k \tau)} |\ck|^2  \cip{\boldsymbol{\nu} r_k^{\sigma}}{r_k^0} \cip{u_k}{\overline {r_k^\sigma}}  r_k^0 $
\item $ -(\boldsymbol{\nu}\Delta\umm)_f=\displaystyle  \sum_{k\in\Z^3\setminus\{\vec 0\}}\sum_{\sigma \in \{\pm,0\}}\sum_{\sigma_1\in\{\pm\}} e^{\ri (\ck\cdot x -{\sigma_1} \omega_k \tau +{\sigma} \omega_k \tau)} |\ck|^2\cip{\boldsymbol{\nu} r_k^{\sigma}}{\overline{r_k^{\sigma_1}}}  \cip{u_k}{\overline {r_k^{\sigma}}}  r_k^{\sigma_1}$,
\end{itemize}
whereas their respective approximations are given by:
\begin{itemize}
\item\hfill $\displaystyle -\wt\nu_{11} \Delta \umm_s= \sum_{k\in\Z^3\setminus\{\vec 0\}} e^{\ri \ck\cdot x } |\ck|^2  \cip{\boldsymbol{\nu} r_k^0}{r_k^0} \cip{u_k}{ {r_k^0}}  r_k^0  =\sum_{k\in\Z^3\setminus\{\vec 0\}}  |\ck|^2  \cip{\boldsymbol{\nu} r_k^0}{r_k^0}  P_k^0\umm $\hfill\refstepcounter{equation}\textup{(\theequation)\label{visco1}}
\item $\displaystyle - \wt\nu_{22} \Delta \umm_f= \sum_{k\in\Z^3\setminus\{\vec 0\}}\sum_{\sigma_1 \in \{\pm \}} e^{\ri \ck\cdot x  } |\ck|^2\cip{\boldsymbol{\nu} r_k^{\sigma_1}}{\overline{r_k^{\sigma_1}}}  \cip{u_k}{\overline {r_k^{\sigma_1}}}  r_k^{\sigma_1}\\ = \sum_{k\in\Z^3\setminus\{\vec 0\}}\sum_{\sigma_1 \in \{\pm \}}   |\ck|^2\cip{\boldsymbol{\nu} r_k^{\sigma_1}}{\overline{r_k^{\sigma_1}}} P_k^{\sigma_1}\umm$.\hfill\refstepcounter{equation}\textup{(\theequation)\label{visco2}}.
\end{itemize}
Thus, the effect of the fast time $\tau$ is not present in this part of our approximation.  Observe that $\wt\nu_{11}$ and $\wt\nu_{12}$ are scalar pseudodifferential operators of degree 0, hence commute with Laplacian, slow and fast projections, the evolution operator $e^{\tau\Le}$ and the linear PV operator $\lpv$. In particular,
 we have 
 \begin{equation}\label{lpvlaplacian}
 -\lpv\left(\wt\nu_{11} \Delta \umm_s\right)= - \wt \nu_{11}\Delta Q= \sum_{k\in\Z^3\setminus\{\vec 0\}} e^{\ri \ck\cdot x } |\ck|^2  \cip{\boldsymbol{\nu} r_k^0}{r_k^0} Q_k.
 \end{equation} Finally, we record the following ellipticity property of the modified Laplacian operators.
\begin{lemma}\label{ellipticity}
Let $\ell\in\R$, $\Ummt\in H^{2\ell+2}(\T;\R^4)$, and $\nu_1,\nu_2 \geq 0$, then the following  estimates hold true 
\begin{enumerate}[i)]
\item $\nu_{min}  \| \Ummt_s \|_{H^{\ell +1}}^2 \leq -\ip{\wt{\nu}_{11}\Delta  \Ummt_s, \ds{\ell} \Ummt_s}$,
\item  $\nu_{min}  \| \Ummt_f \|_{H^{\ell +1}}^2 \leq -\ip{\wt{\nu}_{22}\Delta  \Ummt_f, \ds{\ell} \Ummt_f}$,
\item $\nu_{min}   \| \wt Q \|_{H^{\ell +1}}^2 \leq -\ip{\wt{\nu}_{11}\Delta \wt Q, \ds{\ell}\wt Q}$,
\end{enumerate}
where $\wt Q=\lpv\Ummt.$
 \end{lemma}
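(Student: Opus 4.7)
The plan is to reduce each of the three inequalities to a mode-by-mode pointwise lower bound on the Fourier symbol, using Parseval's identity \eqref{parseval} and the explicit Fourier representations of $\wt\nu_{11}\Delta\umm_s$ and $\wt\nu_{22}\Delta\umm_f$ displayed in Subsection \ref{restrictedLB}. The key observation, which will recur in all three parts, is that $\boldsymbol{\nu}=\mathrm{diag}(\nu_1,\nu_1,\nu_1,\nu_2)$ is a real positive-definite matrix with smallest eigenvalue $\nu_{min}$, so for any unit $4$-vector $v\in\C^4$ one has $\cip{\boldsymbol{\nu} v}{\overline v}=\sum_{j=1}^{4}\nu_{jj}|v_j|^2\geq \nu_{min}\,|v|^2=\nu_{min}$; in particular this lower bound is a \emph{real} number despite $v$ being possibly complex.

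For part (i), since $\wt\nu_{11}$ is a scalar pseudodifferential operator of degree $0$, it commutes with $\ds{\ell}$; combining the formula for $-\wt\nu_{11}\Delta\Ummt_s$ with Parseval gives
\[
-\ip{\wt\nu_{11}\Delta\Ummt_s,\ds{\ell}\Ummt_s}=|\T|\sum_{k\in\Z^3\setminus\{\vec 0\}}|\ck|^{2\ell+2}\cip{\boldsymbol{\nu}r_k^0}{r_k^0}\bigl|\cip{u_k}{r_k^0}\bigr|^2,
\]
and the mode-by-mode bound $\cip{\boldsymbol{\nu}r_k^0}{r_k^0}\geq\nu_{min}$ (using that $r_k^0\in\R^4$ is a unit vector) is compared to the Parseval expansion $\|\Ummt_s\|_{H^{\ell+1}}^2=|\T|\sum_k|\ck|^{2\ell+2}|\cip{u_k}{r_k^0}|^2$. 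For part (ii), the same argument applies with $r_k^0$ replaced by $r_k^{\sigma_1}$, $\sigma_1\in\{\pm\}$: the symbol $\cip{\boldsymbol{\nu}r_k^{\sigma_1}}{\overline{r_k^{\sigma_1}}}=\sum_j\nu_{jj}|(r_k^{\sigma_1})_j|^2\geq\nu_{min}$ is again real and pointwise $\geq\nu_{min}$, and the orthogonality $\cip{r_k^+}{\overline{r_k^-}}=0$ ensures there are no cross terms when summing over $\sigma_1$.

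For part (iii), I invoke \eqref{lpvlaplacian}, which expresses $-\wt\nu_{11}\Delta\wt Q$ with Fourier coefficients $|\ck|^{2}\cip{\boldsymbol{\nu}r_k^0}{r_k^0}\wt Q_k$. Parseval then yields
\[
-\ip{\wt\nu_{11}\Delta\wt Q,\ds{\ell}\wt Q}=|\T|\sum_{k\in\Z^3\setminus\{\vec 0\}}|\ck|^{2\ell+2}\cip{\boldsymbol{\nu}r_k^0}{r_k^0}\,|\wt Q_k|^2\geq\nu_{min}\|\wt Q\|_{H^{\ell+1}}^2,
\]
using the same pointwise bound as in (i). There is no serious obstacle here: the only point requiring care is verifying that the symbols $\cip{\boldsymbol{\nu}r_k^{\pm}}{\overline{r_k^{\pm}}}$ are real and nonnegative, which is immediate from the diagonal structure of $\boldsymbol{\nu}$ and unitarity of the basis $\{r_k^0,r_k^\pm\}$, so the argument essentially amounts to the scalar inequality $\cip{\boldsymbol{\nu}v}{\overline v}\geq\nu_{min}|v|^2$ applied at each wavevector.
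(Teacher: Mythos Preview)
Your proof is correct and follows essentially the same approach as the paper: both reduce the inequalities to the pointwise symbol bound $\cip{\boldsymbol{\nu} r_k^{\sigma}}{\overline{r_k^{\sigma}}}\ge \nu_{min}$ via Parseval and the orthonormality of $\{r_k^0,r_k^+,r_k^-\}$. The only cosmetic difference is that the paper writes out the explicit symbols (e.g.\ $\cip{\boldsymbol{\nu} r_k^0}{r_k^0}=(\nu_1|\ck_H|^2+\nu_2\eta^2 k_3^2)/|\ck_\eta|^2$) as convex combinations of $\nu_1,\nu_2$, whereas you bypass that computation with the cleaner observation $\cip{\boldsymbol{\nu} v}{\overline v}=\sum_j \nu_{jj}|v_j|^2\ge \nu_{min}$ for unit $v$.
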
 
\begin{proof}  
The first and  third statement follow from \eqref{parseval}, \eqref{lpvlaplacian} and the fact that  the Fourier symbol of $\wt \nu_{11}$ is
\[\cip{\boldsymbol{\nu} r_k^{0}} {r_k^{0}}=\frac{{\nu_1|\ck_H|^2 + \nu_2\eta^2|k_3|^2}} {|\ck_\eta|^{2} }.\]  
The second statement follows similarly, after taking into account the orthogonality $r_k^{+}\cdot \overline{r_k^-}=0$ and the form of the Fourier symbol of $\wt{\nu}_{22}$
\[
\cip{\boldsymbol{\nu} r_k^{+}} {\overline{r_k^{+}}}=\cip{\boldsymbol{\nu} r_k^{-}} {\overline{r_k^{-}}}=\frac{\nu_1 \eta^2|k_3|^2 + 2(\nu_2+\nu_1)|\ck_H|^2} {2|\ck_\eta|^2}.
\]
\end{proof}
\end{subsection}
\end{section}
\begin{section}{Restricted Convolution}\label{restricted:int:S}
In this section we present two restricted convolution lemmas, one to be applied to FFF interactions and the other to be applied to FFS and FSF interactions, respectively. 
\subsection{{Restricted convolution under full symmetry}}
We recall the following version of \cite[Lemma 3.1]{BCZNS}, which we will use in the proof of the FFF estimate Theorem \ref{restrictedfT}.
\begin{lemma}\label{restricted:L}
 Let  $\cN \subset (\Z^3\setminus\{\vec 0\})^3$, with $\ind_{\cN}(\cdot,\cdot,\cdot)$ symmetric with respect to any permutation of its arguments, and
\[
\cN_0:=\big\{(k,m,n)\in\cN:  |\cm| \le |\ck| \le |\cn| \big\}.
\] 
If there exist a constant $\beta\in\big[0,  3\big]$ and a constant $C_{\cN}$ so that the counting condition

\begin{equation}\label{ci0}
   \sum_{\substack{k \in \Z^3 }} \ind_{\cN_0}(k,-n-k,n)    \le C_{\cN} \, |\cn|^{\beta},\qquad \forall n \in \Z^3\setminus\{\vec 0\}
\end{equation}
holds, then  for zero-mean $\umm,\vmm\in H^{\beta\over2}(\T)$ and $\wmm\in L^2(\T)$,  
\begin{equation}\label{restricted:f}
\begin{aligned}&\skm  | u_n|\,| v_k|\,| w_m |  \,\ind_\cN( n, k, m)
 \lesssim   \left( \|\umm\|_{L^2}\,\| \vmm\|_{H^{\frac {\beta} 2}}+ \|\umm \|_{H^{\frac {\beta} 2}}\,\|\vmm\|_{L^2} \right)\|\wmm\|_{L^2},
 \end{aligned}
\end{equation} 
with the implied constant depending on $C_{\cN}, \T.$
\end{lemma}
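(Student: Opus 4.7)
The plan is to split $S := \skm |u_n|\,|v_k|\,|w_m|\,\ind_\cN(n,k,m)$ according to the six orderings of the domain-adjusted magnitudes $|\ck|,|\cm|,|\cn|$. Since $\ind_\cN$ is fully symmetric in its three arguments, each ordered piece can be relabelled as a sum over $\cN_0$, at the cost of permuting which of $\umm$, $\vmm$, $\wmm$ is indexed by the smallest-, middle-, or largest-magnitude wavevector. A useful preliminary observation is that if $(k',m',n')\in\cN_0$, then the definition $|\cm'|\le|\ck'|\le|\cn'|$ together with $k'+m'+n'=\vec 0$ yields
\[
|\cn'|\le|\ck'|+|\cm'|\le 2|\ck'|,
\]
a fact that will be crucial below.

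Within each ordered piece, the strategy is to fix the wavevector of largest magnitude and take the middle-magnitude one as the inner summation index, applying Cauchy--Schwarz to pair the indicator against a product of two amplitudes. The counting hypothesis contributes a factor $|\cdot|^{\beta/2}$ on the largest wavevector, while the remaining $\ell^2$ factor carries the two other amplitudes squared. A second Cauchy--Schwarz in the largest variable, combined with Parseval and a change of variable to decouple the surviving convolution, then produces a bound of the form $\|\cdot\|_{H^{\beta/2}}\|\cdot\|_{L^2}\|\cdot\|_{L^2}$, with the Sobolev weight attached to whichever of $\umm,\vmm,\wmm$ is indexed by the largest-magnitude wavevector.

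The main obstacle is that, in the three orderings in which $m$ has the largest magnitude, this procedure would place the Sobolev derivative onto $\|\wmm\|_{H^{\beta/2}}$, which is not allowed by the hypotheses. The remedy is to invoke the triangle bound $|\cn'|\le 2|\ck'|$ recorded above: before the second Cauchy--Schwarz, the weight $|\cdot|^{\beta/2}$ on the largest wavevector may be replaced, up to a factor $2^{\beta/2}$, by the corresponding weight on the middle-magnitude wavevector. In each of the three problematic orderings the middle-magnitude wavevector indexes either $\umm$ or $\vmm$, so the transferred derivative lands precisely where it is allowed to sit. Summing the six orderings absorbs a combinatorial constant and yields \eqref{restricted:f} with implicit constant depending on $C_\cN$ and on $|\T|$, the latter entering through Parseval's identity.
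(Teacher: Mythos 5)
Your proof is correct. The paper does not actually reprove this lemma: it is cited as a version of \cite[Lemma 3.1]{BCZNS}, whose argument, as the discussion surrounding Lemma \ref{restrictedl} in this paper makes clear, is a Littlewood--Paley dyadic decomposition with the inner sum run over frequency annuli $\mathcal A_i$ and the $\ell^2$-in-$i$ sum resummed via a statement like Lemma \ref{prestrictedl}. Your approach is genuinely different and more elementary: split the convolution sum into the six magnitude orderings of $|\ck|,|\cm|,|\cn|$, use the full permutation symmetry of $\ind_{\cN}$ to write each piece through $\ind_{\cN_0}$, apply Cauchy--Schwarz in the middle-magnitude index to invoke the counting hypothesis \eqref{ci0}, and close with a second Cauchy--Schwarz in the largest-magnitude index followed by Parseval. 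The essential extra ingredient you identify --- that on $\cN_0$ the largest modulus is at most twice the middle one, by the triangle inequality and the convolution constraint --- is exactly what lets you move the $|\cdot|^{\beta/2}$ weight from the largest to the middle wavevector in the two orderings where $m$ carries the largest modulus, thereby avoiding the forbidden factor $\|\wmm\|_{H^{\beta/2}}$. One small imprecision in your phrasing: the transfer must be performed under the surviving indicator inside the $\ell^2$ factor left over by the first Cauchy--Schwarz, that is, one moves $|\cm|^{\beta}$ inside the square root and uses $|\cm|^{\beta}\ind_{\cN_0}\le 2^{\beta}|\cn|^{\beta}\ind_{\cN_0}$ pointwise there; as written it reads as though $|\cm|^{\beta/2}$ is replaced while sitting as a prefactor outside the inner sum, where the middle index is a dummy variable and the pointwise bound would not be meaningful. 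This does not affect correctness. In sum, the cited dyadic route has the advantage of extending to the reduced-symmetry setting needed for Lemma \ref{restrictedl}, while your ordering argument is shorter and self-contained when full permutation symmetry is available, as it is here.
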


\begin{subsection}{{Restricted convolution under reduced symmetry}}
We now move on to the  following crucial result for controlling the mixed FFS and FSF energy estimates. The main difference to   the related   rotating Navier-Stokes results, \cite[Lemma 3.1]{BMN3} and \cite[Lemma 3.1]{BCZNS} for FFF interactions, is a reduced symmetry. The  proof is  still based on a Paley-Littlewood decomposition, under our restricted interaction setting. 

In more detail, in both cases of  FSF and FFS interactions, there is a permutation  symmetry  only between two  fast modes, e.g. $k$ and $n$. Thus, without loss of generality, we assume that $|\ck|\leq |\cn|$, and proceed to a Paley-Littlewood decomposition for $k$ in annuli of the form
\[
\mathcal A_i := \{k \in \Z: 2^{i-1} \leq |\ck|< 2^{i} \}.
\] If we further restrict $|\cm|\leq |\ck|, $ then the proof of the trilinear estimate in \cite[Lemma 3.1]{BCZNS} goes through unchanged, with $ k \in \Ai$ implying that $n \in \Ai \cup \Aii.$ However, we must also consider the case $|\cm|>|\ck|,$ a condition which does not provide us with an upper bound for $|\cn|,$  hence explaining the difference between  \eqref{ci0} and \eqref{ci} -- also see  the corresponding condition in  \cite[Lemma 3.1]{BMN3}. 

In what follows, we will  use the following  lemma concerning the sum of a sequence of weighted sums of Fourier coefficients that are restricted outside a geometric sequence of balls.
\begin{lemma}\label{prestrictedl}
Let { $\boldsymbol u \in H^{\ell}(\T;\R)$, $\ell>0$, with Fourier coefficients  $u_n.$} Moreover,  define set $\mathcal B_i = \{n \in \Z^3 :  |\cn|<2^i \}$. Then, the following inequality holds true  for all $\ell>0$:
\[
\sum_{{ i\in\N}}\sum_{n\in \mathcal B_i^{\mathsf{c}}} 
 2^{2i\ell} |u_n|^2 \lesssim_{\ell}{\|{\boldsymbol u}\|}_{H^{\ell}}^2.
\]
\end{lemma}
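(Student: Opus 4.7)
The plan is to swap the order of summation: write the double sum $\sum_{i\ge 0}\sum_{n\in\mathcal B_i^{\mathsf c}} 2^{2i\ell}|u_n|^2$ as $\sum_{n}|u_n|^2 \sum_{i\in S(n)} 2^{2i\ell}$, where the inner index set is $S(n) := \{i\ge 0 : 2^i \le |\cn|\}$, since $n\in\mathcal B_i^{\mathsf c}$ precisely means $|\cn|\ge 2^i$. For each $n\ne\vec 0$, $S(n)$ is either empty (when $|\cn|<1$) or equals $\{0,1,\ldots,\lfloor\log_2|\cn|\rfloor\}$.

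Next I would estimate the inner geometric sum. Because $\ell>0$, we have
\[
\sum_{i=0}^{\lfloor\log_2|\cn|\rfloor} 2^{2i\ell} = \frac{2^{2\ell(\lfloor\log_2|\cn|\rfloor+1)}-1}{2^{2\ell}-1} \le \frac{4^{\ell}}{2^{2\ell}-1}\,|\cn|^{2\ell} =: C_\ell\, |\cn|^{2\ell}.
\]
Substituting this bound back yields
\[
\sum_{i=0}^{\infty}\sum_{n\in\mathcal B_i^{\mathsf c}} 2^{2i\ell}|u_n|^2 \le C_\ell \sum_{n\in\Z^3} |\cn|^{2\ell} |u_n|^2 \lesssim_{\ell} \|\boldsymbol u\|_{H^\ell}^2,
\]
where in the last step one uses Parseval's identity \eqref{parseval} together with the paper's convention $\|\boldsymbol u\|_{H^\ell}^2 = \sum_{n\ne\vec 0}|\cn|^{2\ell}|u_n|^2$ (with the $|\T|$ constant absorbed into the implicit constant).

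There is no real obstacle here: the only point of care is the strict positivity of $\ell$, which is exactly what makes the geometric series sum to something comparable to its largest term $|\cn|^{2\ell}$ (if $\ell=0$ the sum would pick up an unavoidable logarithmic factor, and if $\ell<0$ the small-$i$ terms would dominate and the argument would fail). The assumption of sufficient regularity is used implicitly so that the Fubini-type interchange of the two nonnegative sums is justified, but since every term is nonnegative this is automatic from Tonelli's theorem.
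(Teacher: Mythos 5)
Your proof is correct and follows essentially the same argument as the paper: exchange the order of the two nonnegative sums (Tonelli), observe that the index set in $i$ is $\{0,\dots,\lfloor\log_2|\cn|\rfloor\}$, bound the resulting geometric sum by $C_\ell|\cn|^{2\ell}$ using $\ell>0$, and conclude via Parseval. Your write-up is slightly more careful in spelling out the floor and the case $|\cn|<1$, but the method and the key estimate are identical to the paper's.
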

In fact, a more delicate relation of the form 
\[
\left\| \left( \sum_{ i\in\N}\sum_{n\in \mathcal B_i^{\mathsf{c}}}  2^{2i\ell} |u_n|^2 \right)^{\frac 1 2} \right \|_{\ell^p}   \sim  \| \ds{\frac \ell 2} \umm\|_{L^p}
\]
holds true for $p\in(1,\infty)$ and $\ell>0$, see e.g. \cite{KTV}. Nevertheless, we present a  proof of the weaker statement that we later need, due to its simplicity.
\begin{proof}
First, we note that $i\leq \log_2 |\cn|$, whereas the geometric sum satisfies
\[
\sum_{i=0}^{{ \lceil \log_2 |\cn| \rceil} }  2^{i\ell} { =} \dfrac {1- 2^{\ell({{ \lceil \log_2 |\cn| \rceil} }  + 1)}}{1- 2^{\ell}}\approx_{\ell} |\cn|^\ell.
\]
Thus, 
\[
{\sum_{ i\in\N}}\sum_{n\in \mathcal B_i^{\mathsf{c}}}  2^{2i\ell} |u_n|^2 \leq   \sum_{n\in \mathcal B_i^{\mathsf{c}}}|u_n|^2 \sum_{i=0}^{{{ \lceil \log_2 |\cn| \rceil} } }  2^{2i\ell}  \lesssim_{\ell}\sum_{n\in \mathcal B_i^{\mathsf{c}}} |\cn|^{2\ell} |u_n|^2 \lesssim \| {\umm}\|_{H^{\ell}}^2,
\]
{ with the exchange of sums justified by our regularity assumptions on $\umm,$ which implies the absolute converge of the double sum in any order and its commutability.}
\end{proof}
Then, the following  lemma will allow us to control the crucial mixed trilinear term  occurring in  the energy estimates.
\begin{lemma}\label{restrictedl}
Let  $\ind_{\cN^{'}} ( \cdot, \cdot, \cdot )$ be the characteristic of a subset of $(\Z^3\setminus\{\vec 0\})^3$, which is symmetric with respect to permutations in its first two arguments. Moreover, let $\Ai$ be the  annuli
 \[
 \Ai=\big\{k\in\Z^3\,\big|\,2^{i-1}\le | \ck| < 2^i\big\},\quad\forall i\in\Z^+,
 \] 
 and suppose
\begin{equation}\label{ci}
\sum_{\substack{\ck \in \Ai\atop {|\ck|\le |\cn|  }}} \ind_{\cN^{'}} (k,-n-k,n)    \le  C_0
 2^{\mu  i},\qquad \forall i \in \Z^+,\qquad \forall n\in\Z^3,
\end{equation}
for some $C_0, \mu\ge0,$ not depending on $i$ and $n$. Then, for every 
\[
{a\in(0,\frac \mu 2 )}
\]
 {and  for every  zero-mean
\[
 {\umm\in H^{\frac \mu 2 -a}(\T) ,\quad \vmm\in H^{a}(\T),\quad\wmm\in L^2(\T)},
 \]} we have
\begin{equation}\label{restricted}
\sum_{k,m,n;conv} \ind_{\cN^{'}} (k,-n-k,n)  |  u_k  v_n  w_m |   \lesssim  {\| \umm\|_{H^{\frac \mu 2 - a}}\|\vmm \|_{H^a}  \| \wmm \|_{L^2}  }.
\end{equation}
\end{lemma}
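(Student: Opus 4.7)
The plan is to adapt the fully-symmetric argument of Lemma \ref{restricted:L} to the reduced $1$-$2$ symmetry of $\ind_{\cN^{'}}$, by dyadically decomposing in the variable $k$ and case-splitting according to whether the counting bound \eqref{ci} applies directly. After taking absolute values I may assume $u_k,v_n,w_m \ge 0$, and set $U_k := |\ck|^{\mu/2-a} u_k$, $V_n := |\cn|^a v_n$, so that proving the lemma amounts to bounding the LHS by $\|U\|_{\ell^2}\|V\|_{\ell^2}\|\wmm\|_{\ell^2}$.

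I first Paley--Littlewood decompose $k \in \Ai$ and split each dyadic piece into an aligned part with $|\ck| \le |\cn|$, where \eqref{ci} directly applies to position $1$, and a complementary part with $|\ck| > |\cn|$. In the aligned part, a further dyadic decomposition on $n$, forced to $n \in \mathcal A_j$ with $j \ge i$ by the inequality $|\cn| \ge |\ck| \asymp 2^i$, permits the following double Cauchy--Schwarz: the outer Cauchy--Schwarz, viewing the sum as a duality pairing against $V_n$, pulls out $\|V\|_{\ell^2(\mathcal A_j)}$, and a second Cauchy--Schwarz inside the $\ell^2_n$ norm combined with \eqref{ci} and the injectivity of $k \mapsto -k-n$ at fixed $n$ yields the bilinear bound
\[
\Big\| \sum_{\substack{k \in \Ai\\ |\ck|\le|\cn|}} \ind_{\cN^{'}}\, u_k\, w_{-k-n} \Big\|_{\ell^2_n} \lesssim 2^{\mu i/2}\,\|u\|_{\ell^2(\Ai)}\,\|\wmm\|_{\ell^2}.
\]
After absorbing the dyadic weights $|\ck|^{-(\mu/2-a)}\asymp 2^{-i(\mu/2-a)}$ and $|\cn|^{-a}\asymp 2^{-ja}$, the aligned contribution at scales $(i,j)$ is at most $2^{-(j-i)a}\,\|U\|_{\ell^2(\Ai)}\,\|V\|_{\ell^2(\mathcal A_j)}\,\|\wmm\|_{\ell^2}$. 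A Schur-test-style summation over $0 \le i \le j$, whose off-diagonal decay $2^{-(j-i)a}$ uses crucially that $a>0$, then controls the double sum by $\|U\|_{\ell^2}\,\|V\|_{\ell^2}\,\|\wmm\|_{\ell^2}$, giving the desired estimate on the aligned contribution.

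For the complementary part, a dummy-index swap $k \leftrightarrow m$ combined with the $1$-$2$ symmetry of $\ind_{\cN^{'}}$ re-expresses the sum with the constraint $|\cm|>|\cn|$ and the weights $u_m v_n w_k$. In the sub-case $|\cm|\le|\cn|$ the swapped sum is in aligned configuration (with the roles of the $u,w$-weights interchanged) and is treated exactly as in the previous paragraph. I expect the main obstacle to be the residual regime in which both $|\ck|>|\cn|$ and $|\cm|>|\cn|$ hold simultaneously; here neither form of the counting bound \eqref{ci} is available, and the triangle inequality forces $|\ck|\asymp|\cm|$ with $|\cn|$ strictly smaller than either. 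The plan in this regime is to dispense with counting and instead absorb the loss via the weight $|\cn|^{-a}$, using the elementary estimate $\sum_{0<|\cn|<2^i}|\cn|^{-2a}\lesssim 2^{i(3-2a)}$ (finite because $a>0$) together with a crude Cauchy--Schwarz that places $\|\wmm\|_{\ell^2}$ via the injectivity of $m\mapsto -k-n$ at fixed $n$; summability of the resulting scale-$i$ contribution against the weight $|\ck|^{-(\mu-2a)}$ is then precisely controlled by the hypothesis $a \in (0,\mu/2)$. This residual estimate is the principal technical step.
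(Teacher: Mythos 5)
Your proof correctly handles the aligned regime $|\ck|\le|\cn|$ (the double Cauchy--Schwarz with the counting hypothesis and the Schur-type summation over dyadic scales $i\le j$ with off-diagonal decay $2^{-(j-i)a}$ is a valid, if slightly more elaborate, version of the paper's argument, which uses a single dyadic decomposition plus Lemma~\ref{prestrictedl}). The gap is in the residual regime, and it is not repairable by the crude estimate you propose. The paper symmetrizes by exchanging $k\leftrightarrow n$ --- the two indices carrying the $H^{\frac{\mu}{2}-a}$ and $H^a$ weights --- so that after symmetrization $|\ck|\le|\cn|$ always holds and \eqref{ci} is always available; no residual region ever appears. (The statement's phrase ``symmetric with respect to permutations in its first two arguments'' is evidently a slip for ``first and third arguments'': the proof text explicitly invokes ``symmetry of $\ind_{\cN'}$ in the variables $k$ and $n$'', and Remark~\ref{restrictedr} together with the invocations of the lemma on $\cN^{FFS}$ and $\cN^{FSF}$ confirm that the intended symmetry is between the two positions carrying the $H^{\cdot}$ weights.) By instead using the literal $k\leftrightarrow m$ swap you exchange an $H^{\frac{\mu}{2}-a}$-weighted index with the $L^2$-weighted index, which leaves the region $\{|\ck|>|\cn|,\,|\cm|>|\cn|\}$ where neither form of \eqref{ci} is applicable. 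In that region, your plan yields, at dyadic scale $k\in\Ai$, a contribution bounded by
\[
2^{-i(\frac{\mu}{2}-a)}\|U\|_{\ell^2(\Ai)}\cdot 2^{i(\frac32-a)}\|V\|_{\ell^2}\cdot\|\wmm\|_{L^2}
=2^{i(\frac32-\frac{\mu}{2})}\|U\|_{\ell^2(\Ai)}\|V\|_{\ell^2}\|\wmm\|_{L^2},
\]
where $U_k=|\ck|^{\frac{\mu}{2}-a}u_k$, $V_n=|\cn|^a v_n$; summing over $i$ then requires $\mu\ge3$, which is \emph{not} a consequence of $a\in(0,\frac{\mu}{2})$ and in fact fails in every application in the paper, where $\mu=3-\frac{\zeta}{2}<3$. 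The fix is to symmetrize over $k\leftrightarrow n$; then your aligned-part estimate already covers the full sum and the residual regime never arises.
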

\begin{remark}\label{restrictedr}
Lemma \ref{restrictedl} can be generalized  to sets ${\cN^{'}}\subset(\Z^3\setminus\{\vec 0\})^3$, whose characteristic function possesses a permutation symmetry between any two arguments.
\end{remark}
\begin{proof}
First, we set $\wt \cN:= \cN' \cap \{(k,-n-k,n) \in(\Z^3\setminus \{\vec 0\})^3 : |\ck| \leq |\cn| \}$.  Then, due to the symmetry of $\ind_{\cN^{'}}$ in the variables $k$ and $n$, we have
\begin{align*}
\skm   | v_n u_k  w_m |\ind_{\cN'}{(k,-n-k,n) }  \leq   \skm (  | v_n u_k| +  | v_k u_n|) |  w_m | \ind_{\wt \cN}{(k,-n-k,n) }. 
\end{align*}
We will use a Paley-Littlewood type dyadic decomposition  in order to estimate the first term on the right, expanding the sum  with respect to $k$,  and the full result will follow via reversing the roles of $k$ and $n$. 
In that direction, we have
\[
 \skm    |  u_k   w_m   v_n |\ind_{\wt\cN}( k,-n-k,n)  = \sum_{i\in\Z^+} \sum_{\substack{k\in \Ai }} \sum_{\substack{n  \in \mathcal B_{i-1}^{\mathsf{c}} }}     |  u_k   w_m   v_n | \ind_{\wt\cN}(k,-n-k,n) .
\]
For the sake of brevity, we suppress the input arguments to the characteristic function in the rest of this proof.  Switching the order of summation between $k$ and $n$ and using the Cauchy-Schwarz inequality in the $n$-variable  we have
\begin{align} \label{form1}
 &  
 \sum_{i\in\Z^+} \sum_{\substack{k\in \Ai }} \sum_{\substack{n  \in \mathcal B_{i-1}^{\mathsf{c}} }}      |  u_k   w_{-n-k}   v_n |\ind_{\wt\cN}\\ \nonumber
 & \lesssim  \sum_{i \in \Z^+}\left( 2^{2a i}\sum_{\substack{ n \in \mathcal B_{i-1}^{\mathsf{c}} }} |  v_n |^2  \right)^{1/2}
\left[2^{-2a i}  \sum_{\substack{ n \in \mathcal B_{i-1}^{\mathsf{c}} }}\left(\sum_{ k \in \Ai  }    |  u_k   w_{-n - k}  | \ind_{\wt\cN} \right)^2 \right]^{1/2}.
\end{align}
 Then,  using \eqref{ci} and the Cauchy-Schwarz inequality in the $k$-variable, we obtain
\begin{align*} 
& 
\left[\sum_{\substack{ n \in \mathcal B_{i-1}^{\mathsf{c}} }} \left(\sum_{k\in\Ai} |  u_k   w_{-n - k}  |  \ind_{\wt\cN}   \right)^2 \right]^{1/2} \\
& \lesssim \left[\sum_{\substack{ n \in \mathcal B_{i-1}^{\mathsf{c}} }} \left(\sum_{k\in\Ai} |  u_k   w_{-n - k}  |^{2} \ind_{\wt\cN}  \right) \right]^{1/2}\sup_{n \in \mathcal B_{i-1}^{\mathsf{c}} } \left(\sum_{k\in\Ai} \ind_{\wt\cN}    \right)^{1/2} \lesssim 2^{\frac{\mu i} 2} \left[\sum_{\substack{ n \in \mathcal B_{i-1}^{\mathsf{c}} }} \sum_{k\in\Ai}  |  u_k   w_{-n - k}  |^{2}  {}\right]^{1/2}.
\end{align*}
Finally, we insert the last estimate into \eqref{form1} and combine it with the Cauchy-Schwarz inequality, in order to derive that
\begin{align*}
 &  \skm   |  u_k   w_{-n-k}   v_n | \ind_{\wt\cN}( k,-n-k,n) \\ \nonumber
 & \lesssim  \left( \sum_{i \in \Z^+} \sum_{\substack{ n \in \mathcal B_{i-1}^{\mathsf{c}} }} 2^{2a i }|  v_n |^2  \right)^{1/2}
  \left[ \sum_{i \in \Z^+}\sum_{\substack{ n \in \mathcal B_{i-1}^{\mathsf{c}} }} \sum_{k\in\Ai}2^{( \mu  - 2a) i}  | u_k   w_{-n - k}  |^{2}  {}    \right]^{1/2}\\
   & \lesssim  \left( \sum_{i \in \Z ^+} \sum_{\substack{ n \in \mathcal B_{i-1}^{\mathsf{c}} }} 2^{2a i }|  v_n |^2  \right)^{1/2}
  \left[ \sum_{i \in \Z ^+}\sum_{k \in \Ai }\left(  2^{(\mu-2a) i}  |  u_k|^2  {}   \sum_{\substack{ n \in \mathcal B_{i-1}^{\mathsf{c}} }}   {  |  w_{-n - k}  |^{2}}\right) \right]^{1/2},
\end{align*}
after switching the  sums containing $k$ and $n$. {By Parseval's identity, we have uniform bound $\sum_{\substack{ n \in \mathcal B_{i-1}^{\mathsf{c}} }}   {  |  w_{-n - k}  |^{2}}\lesssim \|\wmm\|_{L^2}^2$.} Thus,  \eqref{restricted}  follows from Lemma \ref{prestrictedl}.
\end{proof}
\end{subsection}
\end{section}
\begin{section}{Counting near resonant interactions}\label{countingS}
 We will address the counting of mixed FSF and FFS interactions in a short section before the  more involved case of FFF interactions.
\begin{subsection}{Counting mixed interactions}
 By fixing one of the fast modes in the mixed interaction set and limiting the length of the other fast mode, 
 the counting problem is then reduced   to a corresponding volume integral estimate, up to a lower order error term. The volume integral
 that comes up is expressed in rescaled spherical coordinates,  with a modified radial component. 
Crucially, the range of the polar  angle variable is restricted due to the near resonance condition.
We first  prove the volume estimates for the mixed sets, with  the reduction of lattice point counting to the volume following in Corollary \ref{mixedvolumec}. Finally, we note that the next Lemma is posed in terms of the FSF set, but still applies to the FFS one.
\begin{lemma}\label{mixedvolumel}
Let $n\in \Z^3\setminus \{\vec 0\},\delta^*\in\left[0,\min\{\frac \eta 2, \frac 1 2\}\right)$  and $M>0$,   be  fixed and consider  the set 
 \[
V_1= \left\{k\in \R^3\setminus \{\vec 0\}: 
\left|\dfrac {|\ck_\eta|} {|\ck|} -  \dfrac {|\cn_\eta|} {|\cn|}\right| \leq \delta^* \quad\text{and}\quad |\ck| \leq M \right\}.
\] Then, the following estimate holds true:
\[
vol(V_1) \leq  C(\eta)\sL_1 \sL_2 \sqrt{\delta^*}M^3,
\]
with $C$ independent of $\sL_1, \sL_2, M, n.$
\end{lemma}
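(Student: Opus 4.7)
The plan is to reduce the lattice-adjusted volume to a Euclidean volume in $\ck$-coordinates, pass to spherical coordinates tailored to the dispersion relation, and then exploit the fact that the near-resonance condition only constrains the polar angle. First I would make the linear change of variables $k \mapsto \ck = (k_1/\sL_1, k_2/\sL_2, k_3)$, whose Jacobian is $(\sL_1\sL_2)^{-1}$, yielding
\[
 \mathrm{vol}(V_1) = \sL_1\sL_2 \,\mathrm{vol}\Big(\big\{\ck\in \R^3\setminus\{\vec 0\} : |\ck|\le M,\ \big|f_\eta(\ck_3/|\ck|) - f_\eta(\cn_3/|\cn|)\big|\le\delta^*\big\}\Big),
\]
where $f_\eta(x) = \sqrt{1+(\eta^2-1)x^2}$ as introduced in the notation section.

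Next I would write $\ck$ in spherical coordinates $(r,\theta,\phi)$, so $\ck_3/|\ck| = \cos\theta$. The constraint involves neither $r$ nor the azimuthal angle $\phi$, so the integral factors as
\[
 \mathrm{vol}_{\ck} = \Big(\int_0^M r^2\,\rd r\Big)\Big(\int_0^{2\pi}\rd\phi\Big)\Big(\int_{\Theta}\sin\theta\,\rd\theta\Big) = \tfrac{2\pi M^3}{3}\int_{\Theta}\sin\theta\,\rd\theta,
\]
with $\Theta := \{\theta\in[0,\pi] : |f_\eta(\cos\theta) - f_\eta(c_n)|\le\delta^*\}$ and $c_n = \cn_3/|\cn|$. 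Substituting $u = \cos\theta$, the remaining task is to bound
\[
 \mathrm{meas}\big\{u\in[-1,1] : |f_\eta(u) - f_\eta(c_n)|\le\delta^*\big\} \lesssim_\eta \sqrt{\delta^*}.
\]

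The algebraic identity $f_\eta(u)^2 - f_\eta(c_n)^2 = (\eta^2-1)(u^2 - c_n^2)$ together with the uniform lower bound $f_\eta\ge\min\{1,\eta\}>0$ gives
\[
 |u^2 - c_n^2| \;=\; \frac{|f_\eta(u)-f_\eta(c_n)|\,(f_\eta(u)+f_\eta(c_n))}{|\eta^2-1|} \;\le\; C(\eta)\,\delta^*,
\]
since $\eta\neq 1$ by assumption. Thus it suffices to estimate the measure of $S := \{u\in[-1,1] : u^2\in[c_n^2-C\delta^*,\, c_n^2+C\delta^*]\}$, for which I would split into two cases: if $c_n^2 \le C\delta^*$ then $S \subset \{|u|\le\sqrt{2C\delta^*}\}$ has measure $O(\sqrt{\delta^*})$; otherwise $S$ consists of two intervals near $\pm|c_n|$ with total length
\[
 2\big(\sqrt{c_n^2+C\delta^*} - \sqrt{c_n^2-C\delta^*}\big) = \frac{4C\delta^*}{\sqrt{c_n^2+C\delta^*}+\sqrt{c_n^2-C\delta^*}} \;\lesssim\; \sqrt{\delta^*}.
\]
Combining these bounds gives the angular integral bound $O(\sqrt{\delta^*})$ uniformly in $n$, which yields the claim after multiplying by $\tfrac{2\pi M^3}{3}\sL_1\sL_2$.

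The only genuine subtlety is the non-monotonicity of $f_\eta$, which vanishes to first order at $u=0$; this is precisely what forces the $\sqrt{\delta^*}$ rate (rather than the $\delta^*$ rate one would get from a strictly monotone dispersion relation as in the rotating Navier--Stokes case). I expect this to be the main obstacle conceptually, but it is handled cleanly by the quadratic reduction above; no elliptic integral machinery is required for mixed interactions, in contrast with the FFF counting problem.
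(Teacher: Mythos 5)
Your proof is correct and takes essentially the same approach as the paper's: pass to spherical coordinates, observe that the near-resonance condition confines $c_k^2$ to an interval of length $O_\eta(\delta^*)$, and deduce that the corresponding set of $c_k = \cos\theta_k$ has measure $O_\eta(\sqrt{\delta^*})$. Your explicit case split on $c_n^2\gtrless C\delta^*$ is in fact a bit cleaner than the paper's statement that ``$|c_k|$ is constrained to an interval $I$ with $|I|=\tfrac{4\omega_n\delta^*}{|\eta^2-1|}$'' (that quantity is really the length of the interval for $c_k^2$), but both arguments arrive at the same $\sqrt{\delta^*}$ angular bound and hence the same volume estimate.
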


\begin{proof}
  First, note the volume element scaling property  $\sL_1 \sL_2 \rd \ck =  \rd k.$ Thus,  it suffices to prove the estimate for $\sL_1 = \sL_2 =1.$ We introduce modified spherical coordinates for the wavevector $k,$ $\left(\lambda_k',  \theta_k, \phi_k \right)  \in [ 0, 1] \times [0, \pi] \times  [0, 2\pi]$, with a modified radial component $\lambda_k':=\frac {|k|}{M}.$  In particular,  we have
\[
 k =M\lambda_k' (\sin\theta_k \cos \phi_k, \sin \theta_k \sin \phi_k, \cos \theta_k),\quad\text{for}\quad k\in\Z^3.
\]
The near resonance condition in the mixed context implies the following range for $ c_k:=\cos\theta_k$
 \begin{equation}\label{range}
 (\eta^2 - 1) c_k^2\in \left[ (\omega_n - \delta^*)^2-1, (\omega_n  + \delta^*)^2 - 1  \right],
 \end{equation}
  after using the definition of the dispersion relation directly.  It immediately follows that $|c_k|$ is constrained to an interval $I$, with length    $|I| { \lesssim \sqrt{\frac{ \omega_n \delta^*}{|\eta^2 -1|}}}\lesssim_{\eta} \sqrt{\delta^*},$ after using the trivial upper bound on  $\omega_n$.  Then, using the change of variable $\frac {|k|}{M} = \lambda_k'$ and  also substituting $\cos\theta_k$ instead of $\theta_k$, we have
\begin{align*}
vol(V_1) =  M^3 \int_{0}^{1}\int_0^{2\pi} \int_0^{\pi} \ind_{V_1} \sin\theta_k   \rd\theta_k \rd\phi_k \rd \lambda_k' &\leq  M^3 \int_{0}^{1}\int_0^{2\pi} \int_{I}     \rd c_k d\phi_k  \rd \lambda_k'\\
 \leq  C(\eta)   \sqrt{\delta^*} M ^3,
\end{align*}
 via Fubini's Theorem.
\end{proof}
The following Corollary provides an estimate on the number of mixed interactions, if one of the fast wavenumbers is fixed.
\begin{corollary}\label{mixedvolumec}
The number of integers in  the set $V_1$ from Lemma \ref{mixedvolumel} satisfies
\[
\sum_{k\in\Z^3} \ind_{V_1}(k) \leq C\left(\sL_1\sL_2\sqrt{\delta^*} M^3 + \sL_1\sL_2 M^2 + (\sL_1+\sL_2) M\right),
\] for a constant $C$ that is independent of $\sL_1, \sL_2, M, n$. 
\end{corollary}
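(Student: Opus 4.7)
The plan is to bound the lattice count by the volume estimate of Lemma \ref{mixedvolumel}, with additive corrections arising from the boundary of $V_1$. Associating to each $k \in V_1 \cap \Z^3$ the unit cube $k + [0,1)^3 \subset \R^3$ in $k$-coordinates, the cubes are pairwise disjoint, each of unit $k$-volume, and contained in the Minkowski sum $V_1 + [0,1]^3$. This yields
\[
\sum_{k\in\Z^3}\ind_{V_1}(k) \;\le\; \mathrm{vol}(V_1) + \mathrm{vol}\bigl((V_1 + [0,1]^3) \setminus V_1\bigr),
\]
and the first term on the right is controlled by $C(\eta)\,\sL_1\sL_2\sqrt{\delta^*}\,M^3$ via Lemma \ref{mixedvolumel}.

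For the boundary contribution, the approach is to slice by integer $k_3 \in [-M, M]$ and apply a two-dimensional Gauss-circle count in each slice. For each fixed $k_3$, the cross-section of $V_1$ in the $(k_1, k_2)$-variables is an elliptic annulus $\{r_1(k_3) \le |\ck_H| \le r_2(k_3)\}$ in $\ck_H$-coordinates, where the radii $r_1, r_2$ are determined by the near-resonance window (which restricts $c_\ck^2=k_3^2/|\ck|^2$ to an interval of length $O(\sqrt{\delta^*})$ in $c_\ck$, as in the proof of Lemma \ref{mixedvolumel}) and by the spherical constraint $|\ck_H|^2 \le M^2 - k_3^2$. The Gauss-circle count for this elliptic annulus yields
\[
\#\bigl\{(k_1,k_2)\in\Z^2\,:\,r_1\le|\ck_H|\le r_2\bigr\}= \sL_1\sL_2\,\pi(r_2^2-r_1^2) + O\bigl((\sL_1+\sL_2)\,r_2\bigr) + O(1),
\]
with the perimeter error reflecting that the outer ellipse has semi-axes $r_2\sL_1,\,r_2\sL_2$.

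Summing over $k_3$, the main term recovers a Riemann approximation of $\mathrm{vol}(V_1)$; the Riemann error is bounded by the maximum per-slice area $\lesssim \sL_1\sL_2 M^2$, producing the $\sL_1\sL_2 M^2$ correction, while the $O(1)$ per-slice error contributes $O(M)$. The main obstacle will be controlling the perimeter-sum $\sum_{k_3}(\sL_1+\sL_2)r_2$ so that it fits the claimed $(\sL_1+\sL_2)M$ rather than degrading to $(\sL_1+\sL_2)M^2$: the plan is to split into the degenerate regime (when the center $c_n$ of the window is near an endpoint of the admissible $c$-interval, forcing $|k_3|\lesssim\sqrt{\delta^*}M$) and the non-degenerate regime (where $r_2(k_3)\sim \alpha|k_3|$ with bounded $\alpha$), exploiting in each case the sharp quantitative relationship between $r_2$ and $|k_3|$ from the near-resonance geometry.
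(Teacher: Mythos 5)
Your approach takes a genuinely different slicing from the paper's: you fix the vertical coordinate $k_3$ and perform a two-dimensional Gauss-circle count of $k_H$ in each slice, whereas the paper fixes $k_H$, uses the sign of $\partial_{k_3}\wt F$ (where $\wt F(k_3)=|\ck_\eta|/|\ck|-|\cn_\eta|/|\cn|$) to see that for $k_H\ne\vec0$ the set $\{k_3:|\wt F|\le\delta^*\}\cap\{|\ck|\le M\}$ is a union of $O(1)$ intervals, and then counts $k_3$ one-dimensionally with error $O(1)$ per slice. That orientation is the source of the term $\sL_1\sL_2 M^2$ in the claimed bound: there are $O(\sL_1\sL_2 M^2)$ lattice choices of $k_H$ with $|\ck_H|\le M$, each contributing a bounded integer-rounding error, while the remaining Riemann error in replacing $\sum_{k_H}\int_\R\ind_{V_1}(k_H,\cdot)\,\rd k_3$ by $vol(V_1)$ is handled by the lattice-vs-integral lemma cited as \cite[Lemma~A.2]{BCZNS} and produces the $(\sL_1+\sL_2)M$ term.

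The gap in your version is precisely the one you flag, and your proposed fix does not close it. The Gauss-circle perimeter error per $k_3$-slice is $\Theta\bigl((\sL_1+\sL_2)\,r_2(k_3)\bigr)$, and this cannot be improved in the worst case even for a thin annulus (a thin ring of radius $R$ can contain $0$ to $\Theta(R)$ lattice points). In the ``non-degenerate'' regime you describe, $r_2(k_3)\sim\alpha|k_3|$ with $\alpha=\sqrt{1-c_n^2}/c_n$ bounded, and $|k_3|$ ranges over an interval of length $\sim c_n M$, so $\sum_{k_3}r_2\sim\alpha(c_n M)^2\sim M^2$ (the constant $\alpha c_n^2=c_n\sqrt{1-c_n^2}$ is maximal at $c_n=1/\sqrt2$, not small). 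In the ``degenerate'' regime $c_n\approx0$ there are only $O(\sqrt{\delta^*}M)$ contributing slices but each has $r_2\approx M$, giving $\sqrt{\delta^*}M^2$. Either way you obtain a perimeter contribution of order $(\sL_1+\sL_2)M^2$ (or $(\sL_1+\sL_2)\sqrt{\delta^*}M^2$), which is strictly larger than the claimed $(\sL_1+\sL_2)M$ and is \emph{not} absorbed by the $\sL_1\sL_2M^2$ term when $\sL_1,\sL_2<1$: e.g.\ with $\sL_1=\sL_2=\eps\to0$, your error is $\approx2\eps M^2$ while the Corollary's bound degenerates to $\approx2\eps M$. So as written the argument proves a weaker statement, not the stated one. (That weaker statement would still suffice for the downstream Theorems, since implied constants are allowed to depend on $\T$; but it is not what the Corollary asserts.) The paper's choice to index the 1D count by the 2D variable $k_H$ — rather than indexing the 2D count by the 1D variable $k_3$ — is what keeps the per-slice error at $O(1)$ and makes the $\sL_1\sL_2M^2+(\sL_1+\sL_2)M$ decomposition come out.
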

\begin{proof}
We first fix $k_H \in \Z^2\setminus\{\vec 0\}$. Then, defining $\wt F :\R^3\setminus\{\vec 0\} \to \R$ with $
\wt F (k_3) := \dfrac {|\ck_\eta|} {|\ck|} -  \dfrac {|\cn_\eta|} {|\cn|},$
we have
\[
\dfrac {\partial \wt F} {\partial k_3} = (\eta^2 - 1) \dfrac {k_3 |\ck_H|^2}{|\ck_\eta| |\ck|^3}. 
\]
Since $k_H\neq \vec 0,$  there are at most two intervals of  monotonicity for $\wt F.$ On each such interval, the number of  $k_3\in\Z$ can be estimated by $\int_{\R}\ind_{V_1} (k_H, k_3) \rd k_3 +1$, thus:
\[
\sum_{k_3 \in \Z} \ind_{V_1}(k_H, k_3) \leq 2 + \int_{\R} \ind_{V_1}(k_H, k_3) \rd k_3.
\]
The limits of the last integral depend linearly on $|\ck_H|$, as \eqref{range} holds true.
Hence, we sum over $\ck_H$, with $|\ck_H| \leq M^2$, so that estimating the left Riemann sums with the corresponding integrals, via \cite[Lemma A.2]{BCZNS}, yields
\[
\sum_{k\in \Z^3} \ind_{V_1} (k) \leq C\left(\sL_1\sL_2 M^2+ (\sL_1+\sL_2)M + vol(V_1)\right).
\]
\end{proof}
\end{subsection}
\begin{subsection}{Initial reduction to two-dimensional counting for FFF  interactions}
In order to estimate the number of FFF modes, under the symmetry assumptions of Lemma \ref{restricted:L},  we
 first  define the function $F_{n,\sigma_1,\sigma_2} :\R^3\setminus\{\vec 0, -n\}\to\R$ with:
\[
F_{n,\sigma_1,\sigma_2}(k) =   \dfrac {|\cm_\eta|} {|\cm|} + \sigma_1 \dfrac {|\cn_\eta|} {|\cn|}+\sigma_2 \dfrac {|\ck_\eta|} {|\ck|}  ,
\]
for fixed $n\in \Z^3 \setminus\{\vec 0\}$ and $\sigma_1,\sigma_2 \in\{\pm\}.$ In what follows, we  just write $F$ for the sake of brevity. In addition, since any wavevector $k \in \cN_0$ satisfies $\frac{|\cn|} 2 \leq |\ck|\leq |\cn|$, we restrict our attention to the set  
 \[
V_{n, \sigma_1,\sigma_2} = \left\{k\in \R^3\setminus \{\vec 0,-n\}: |F|\leq \delta \quad\text{and}\quad\frac {|\cn|} 2 \leq |\ck| \leq |\cn|  \right\},
\]
with $\delta$  depending on $\max\{|\ck|,|\cm|,|\cn|\}=|\cn|$, hence  fixed.   Note that  $V_{n, \sigma_1, \sigma_2}\subset \R^3\setminus \{\vec 0, -n\}$, so that we are not a-priori restricted to integer points. Similarly to our convention for $F$, we will simply write $V$ for the remainder of this subsection.

We now present the following theorem, which reduces the lattice counting problem for the set $\cN_0$ to volume estimates for $V$.   This is possible as the monotonicity of the function $F$ in each Cartesian direction is well controlled in our case. The result holds under a fixed choice of $n$ and is valid up to a remainder term of order $|\cn|^2.$
\begin{theorem}\label{counting3}
Let $n\neq 0 $ and $\delta\in\left[0,\min\{\frac \eta 2, \frac 1 2\}\right)$ be fixed.  Then there exists a  constant $C>0$ independent of $n, \delta,\sigma_1,\sigma_2,\sL_1,\sL_2$ such that the cardinality of the set
\[
\cN_0:=\big\{(n,k,m)\in\cN: |\cn|\ge |\ck| \ge |\cm|\big\}
\] 
satisfies the following estimate
\[
 \sum_{{k \in \Z^3 }} \ind_{\cN_0}(n,k,-n-k)\le C|\cn|^2(\sL_1\sL_2+\sL_1+\sL_2)+ \sum_{(\sigma_1,\sigma_2)\in\{+,-\}^2}vol(V_{n, \sigma_1,\sigma_2}).
\]

\end{theorem}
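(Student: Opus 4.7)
The plan is to chain together the reductions that have already been carried out in the preceding two subsections. First, I would observe that if $(n,k,m) \in \cN_0$ with $m=-n-k$, then the FFF near-resonance condition $\min_{\vec\sigma \in \{\pm\}^3} |\omega_{kmn}^{\vec\sigma}| \leq \delta$ means that at least one sign triplet $\vec\sigma \in \{\pm\}^3$ satisfies $|\sigma_1 \omega_k + \sigma_2 \omega_m + \sigma_3 \omega_n| \leq \delta$. Dividing through by the overall sign $\sigma_3$ and renaming, each such triplet corresponds to exactly one pair $(\sigma_1,\sigma_2) \in \{+,-\}^2$, so that
\[
\ind_{\cN_0}(n,k,-n-k) \;\leq\; \sum_{(\sigma_1,\sigma_2)\in\{+,-\}^2} \ind_{V_{n,\sigma_1,\sigma_2}}(k),
\]
upon restricting to $|\cn|/2 \leq |\ck| \leq |\cn|$ (which holds on $\cN_0$). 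Summing over $k\in\Z^3$ reduces the task to estimating $\sum_{k\in\Z^3}\ind_{V_{n,\sigma_1,\sigma_2}}(k)$ for each of the four sign choices.

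Next, I would apply inequality \eqref{areaL1}, already established in the paper, which for each fixed $(\sigma_1,\sigma_2)$ gives
\[
\sum_{k\in\Z^3}\ind_{V_{n,\sigma_1,\sigma_2}}(k) \;\leq\; C\bigl[\sL_1\sL_2|\cn|^2 + (\sL_1+\sL_2)|\cn|\bigr] + \sum_{\substack{k_H\in\Z^2\\|\ck_H|\le|\cn|}}\int_\R \ind_{V_{n,\sigma_1,\sigma_2}}(k_H,k_3)\,\rd k_3.
\]
Swapping sum and integral in the last term and introducing $S(k_3)=\{k_H\in\R^2 : (k_H,k_3)\in V_{n,\sigma_1,\sigma_2}\}$, the double sum-integral becomes $\int_{-|\cn|}^{|\cn|} \#(\Z^2\cap S(k_3))\,\rd k_3$. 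At this point Lemma \ref{areaL} is ready to be invoked slice by slice: provided $k_3 \notin \{0,-n_3,-n_3/2\}$,
\[
\#(\Z^2\cap S(k_3)) \;\leq\; Area(S(k_3)) + C_1(\sL_1+\sL_2)\sqrt{|\cn|^2 - k_3^2} + C_2.
\]
The three excluded values of $k_3$ contribute trivially at most $O(\sL_1\sL_2|\cn|^2)$ lattice points in $k_H$ and are absorbed into the error.

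Finally, I would integrate the slice bound in $k_3$ over $(-|\cn|,|\cn|)$. Fubini's theorem gives $\int Area(S(k_3))\,\rd k_3 = vol(V_{n,\sigma_1,\sigma_2})$, exactly the volume term in the stated conclusion. The remaining error integrates as $\int_{-|\cn|}^{|\cn|}\bigl[C_1(\sL_1+\sL_2)\sqrt{|\cn|^2-k_3^2}+C_2\bigr]\rd k_3 \lesssim (\sL_1+\sL_2)|\cn|^2 + |\cn|$, which combines with the pre-existing error from \eqref{areaL1} and the excluded-hyperplane contribution into the single bound $C|\cn|^2(\sL_1\sL_2+\sL_1+\sL_2)$. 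Summing over the four sign choices $(\sigma_1,\sigma_2)\in\{+,-\}^2$ (which only affects the implicit constant) yields the claim. No genuinely new obstacle arises here; the only points of care are the clean bookkeeping of the excluded $k_3$ values from the hypothesis of Lemma \ref{areaL} and ensuring that the $\sqrt{|\cn|^2-k_3^2}$ term integrates to something of size $|\cn|^2$ rather than $|\cn|$, so that it fits inside the advertised error budget.
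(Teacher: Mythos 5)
Your proof is correct and follows exactly the chain the paper's one-sentence justification indicates: bound $\ind_{\cN_0}$ by $\sum_{(\sigma_1,\sigma_2)}\ind_{V_{n,\sigma_1,\sigma_2}}$, invoke \eqref{areaL1}, pass to $\int_{-|\cn|}^{|\cn|}\#(\Z^2\cap S(k_3))\,\rd k_3$, apply Lemma~\ref{areaL} slice by slice, and integrate via Fubini. One small simplification: the excluded $k_3$ values form a Lebesgue-null set, so they contribute nothing to the $k_3$-integral and need no separate $O(|\cn|^2)$ bookkeeping — Lemma~\ref{areaL} holding for a.e.\ $k_3$ suffices.
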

\begin{proof}
 Let  $ k_H \in \Z^2$, be fixed. Then, we have:
\[
\dfrac{\partial F}{\partial k_3 }=(\eta^2-1)\left[ \sigma_2\dfrac{k_3  |\ck_H|^2}{ |\ck_\eta| |\ck|^3 } - \dfrac{m_3  |\cm_H|^2}{ |\cm_\eta| |\cm|^3 }\right].
\]
The critical points of $F$ for fixed $n$ and $\ck_H$ are included in the set of solutions of the following equation
\begin{align*}
k_3^2  |\ck_H|^4  \left[|\cm_H|^2 +\eta^2 (k_3 + n_3)^2 \right] &  \left[|\cm_H|^2 + (k_3 + n_3)^2  \right]^3\\
 & - (k_3+n_3)^2  |\cm_H|^4  { \left( |\ck_H|^2 + \eta^2 k_3^2 \right)} \left[|\ck_H|^2 + k_3 ^2  \right]^3= 0,
\end{align*}
{ after eliminating radicals in  $\dfrac{\partial F}{\partial k_3 } = 0$ and recalling that $\ck_\eta=(\ck_1, \ck_2, \eta k_3)$.}
The expression on the left-handside is a polynomial of degree 10 in $k_3$, with leading order coefficient $ \eta^2(|\ck_H|^4-|\cm_H|^4).$  Thus, for fixed $k_H \in \Z^2$, it follows that  $\R$ can be split in at most 11 intervals  where $F$ is strictly monotonic in  $k_3$. Note that  $F$ can be constant as a function of $k_3$ when $| \ck_H | = | \cm_H |$  and $n_3 = 0$. Nevertheless, 
we still have
\begin{equation}\label{k3sum}
\sum_{k_3 \in \Z} \ind_{V}(k_H, k_3) \leq 11 + \int_{\R} \ind_V (k_H, k_3) \rd k_3,
\end{equation}
for any fixed $ \ck_H $ and $ n \neq 0 $.
On the other hand, we claim that in the horizontal directions, $k_1, k_2,$ similar estimates hold true. Namely, we have
\begin{equation}\label{claimi}
\sum_{k_i \in \Z} \ind_{V}(k_H, k_3) \leq 11 + \int_{\R} \ind_V (k_H,  k_3) \rd k_i, \quad i \in \{ 1,2 \},
\end{equation}
when $n \neq 0$, $k_j$,  with $j \in \{1,2\}$, $i \neq j$, and $k_3$ are fixed.

 Indeed, critical points in the $k_i$-direction, for $i=1,2$, correspond to solutions of
 \[
  \sigma_2\dfrac{\ck_i  k_3^2}{ |\ck_\eta| |\ck|^3 } - \dfrac{\cm_i  m_3^2}{ |\cm_\eta| |\cm|^3 } = 0,
\]
 which are included in the roots of the polynomial
\begin{align}
\ck_i^2  k_3^4  |{\cm}_\eta|^2 &  \left[|\cm_H|^2 + (k_3 + n_3)^2  \right]^3 - \left( \ck_i + \cn_i \right)^2 \cm_3^4 |{ \ck}_\eta|^2 \left[|\ck_H|^2 + k_3 ^2  \right]^3.
\end{align}
 In the case $k_3^4 - m_3^4 \neq 0,$  the critical points in the $k_i$-direction are among the roots of a polynomial of degree 10 in $\ck_i$, which proves \eqref{claimi}  in that context.  Otherwise, $F$ is constant in $k_i$ and  \eqref{claimi} still holds.
Then, using  \eqref{claimi}, we  sum \eqref{k3sum} over $k_1\in\Z$ with $|\ck_1|\leq |\cn|$,   to deduce that:
\begin{align}\label{areaL1}
&\sum_{k_1,k_3 \in \Z^2} \ind_{V}(k_1,k_2,k_3) \leq C \sL_1 {|\cn|} + \int_{\R^2} \ind_V (k_H, k_3) \rd k_1 \rd k_3.
\end{align}
A further summation of \eqref{areaL1} over $k_2 \in \Z$ with $| \ck_2 |\leq |\cn|,$ combined with  \eqref{claimi}, up to an exchange between integrals and sums when necessary, then yields the result. 
\end{proof}
\end{subsection}
\begin{subsection}{Some properties of FFF   interactions under near resonance condition}
We begin this subsection with a  lemma on the possible choices of  signs for a near resonant FFF triplet. As it turns out, the  simple bound $\omega_k >\delta$, in the  rotating stratified Boussinesq context,  leads to  a simplification in comparison to the rotating  Navier-Stokes case in \cite{BCZNS}. 
\begin{lemma}\label{ordering}
 Consider $\sigma_1,\sigma_2,\sigma_3 \in \{\pm\}$ and 
 the  ordered dispersion relation values $\omega_1 \geq \omega_2 \geq \omega_3 > \delta\ge0$.  If the near resonance condition
\[
| \sigma_1 \omega_1+  \sigma_2\omega_2 +  \sigma_3 \omega_3 |\leq\delta
\]
holds true,   then we necessarily have 
\[
\sigma_1 \neq \sigma_2\quad\text{and}\quad \sigma_2= \sigma_3,
\]
 namely, $(\sigma_1,\sigma_2,\sigma_3)=(+,-,-)$ or $(-,+,+)$.
\end{lemma}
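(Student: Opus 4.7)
The plan is to enumerate the eight possible sign patterns $\vec\sigma\in\{\pm\}^3$ and eliminate all but the two asserted cases by exploiting the strict lower bound $\omega_3>\delta$ together with the ordering $\omega_1\ge\omega_2\ge\omega_3$. Since the near-resonance inequality $|\sigma_1\omega_1+\sigma_2\omega_2+\sigma_3\omega_3|\le\delta$ is symmetric under a global sign flip $\vec\sigma\mapsto -\vec\sigma$, I would first reduce to the four patterns in which $\sigma_1=+$ (treating the opposite patterns by this symmetry at the end).

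For the pattern $(+,+,+)$ the sum equals $\omega_1+\omega_2+\omega_3>3\delta>\delta$, immediately violating the near-resonance bound. For $(+,+,-)$ the ordering gives $\omega_1+\omega_2-\omega_3\ge\omega_1\ge\omega_3>\delta$. For $(+,-,+)$ I would write $\omega_1-\omega_2+\omega_3\ge\omega_3>\delta$, again using $\omega_1\ge\omega_2$. Each of these three patterns is therefore ruled out, leaving only $(+,-,-)$ in the $\sigma_1=+$ family, which is precisely one of the two conclusions of the lemma.

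Applying the global sign flip to each of these four patterns disposes of $(-,-,-)$, $(-,-,+)$, and $(-,+,-)$ in the same way and produces the remaining admissible case $(-,+,+)$. Thus in every surviving pattern one has $\sigma_2=\sigma_3\ne\sigma_1$, which is the claim.

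The main (mild) obstacle is simply ensuring the strict inequality $\omega_j>\delta$ at the bottom of the chain. This follows from $\omega_k=\sqrt{1+(\eta^2-1)c_k^2}\ge\min\{1,\eta\}$ together with the standing assumption $\delta<\min\{\tfrac{\eta}{2},\tfrac12\}$ from \eqref{deltabound}, so in fact $\omega_3\ge 2\delta$, which is more than enough for each of the estimates above.
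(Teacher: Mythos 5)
Your proof is correct and takes essentially the same approach as the paper: reduce to $\sigma_1=+$ by the global sign-flip symmetry, then use the ordering $\omega_1\ge\omega_2\ge\omega_3>\delta$ to rule out the three offending sign patterns. The paper phrases it as two contradiction arguments (first ``suppose $\sigma_1=\sigma_2$,'' then, given that $\sigma_1\ne\sigma_2$, ``suppose $\sigma_2\ne\sigma_3$'') rather than an explicit three-case enumeration, but the inequalities invoked are identical.
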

\begin{proof}
 It suffices to prove { the statement} with the additional assumption $\sigma_1=+$.

We will argue by contradiction. First, we suppose instead that $\sigma_1 =  \sigma_2$. Then, the near resonance condition implies that $\omega_1 + \omega_2 +   \sigma_3  \omega_3{\le\delta}$. Since  $ \omega_ 2 +   {\sigma_3}\omega_3\ge0$  by the ordering assumption, we infer that $ \omega_1\leq \delta$, leading to a contradiction.

 Similarly, we suppose instead $ \sigma_2  \neq \sigma_3$.
 Then the first part of the  conclusion guarantees that $\sigma_2 = -$,   hence  $\sigma_3 =+ $. Then  
$ \omega_1 - \omega_ 2 + { \omega_3\le \delta}$, by  the near resonance condition. Since $\omega_1 - \omega_ 2\ge0$  by the ordering assumption, we infer
 that $\omega_3  \leq \delta$,   leading to a contradiction.
\end{proof}
\begin{corollary}\label{orderingeta}
Let  $\delta\in [0,\min\{\frac{\eta} 2,\frac 1 2\})$ be fixed. Then a necessary condition for the set of FFF near resonances to be non-empty is
\[
{\begin{cases} \eta \geq 2-\delta,&\text{\quad if \; }\eta>1,\\
\eta \leq \frac{\delta+1} 2,&\text{\quad if \; }\eta<1.
\end{cases}}
\] 
\end{corollary}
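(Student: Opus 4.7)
\textbf{Proof plan for Corollary \ref{orderingeta}.} The plan is to combine the sign structure dictated by Lemma \ref{ordering} with the explicit range of the dispersion relation $\omega_k$ as a function of $\eta$. Recall that in the spherical coordinates introduced in the notation section one has $\omega_k=\sqrt{1+(\eta^2-1)c_k^2}$ with $c_k=\cos\theta_k\in[-1,1]$, so that $\omega_k$ takes values in the closed interval with endpoints $1$ and $\eta$. Hence, uniformly in the wavevector,
\begin{equation*}
\min\{1,\eta\}\le\omega_k\le\max\{1,\eta\}.
\end{equation*}

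Suppose the FFF near resonance set is non-empty, with an admissible triple $(k,m,n)$ and signs $\vec\sigma\in\{\pm\}^3$. Relabel the dispersion values in decreasing order as $\omega_1\ge\omega_2\ge\omega_3$. The assumption $\delta<\min\{\eta/2,1/2\}\le\min\{1,\eta\}$ shows that $\delta<\omega_i$ for $i=1,2,3$, so Lemma \ref{ordering} applies and forces the sign pattern to be $(+,-,-)$ (or its overall negative). In either case the near resonance condition reduces to the single inequality
\begin{equation*}
|\omega_1-\omega_2-\omega_3|\le\delta,
\end{equation*}
equivalently $\omega_2+\omega_3-\delta\le\omega_1\le\omega_2+\omega_3+\delta$.

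The left half of this inequality is the useful one. Using the uniform bounds above with $\omega_1\le\max\{1,\eta\}$ and $\omega_2+\omega_3\ge2\min\{1,\eta\}$, a necessary condition becomes $2\min\{1,\eta\}-\delta\le\max\{1,\eta\}$. Splitting into the two cases $\eta>1$ and $\eta<1$ yields respectively $\eta\ge 2-\delta$ and $\eta\le(1+\delta)/2$, which is exactly the stated dichotomy. No step is a real obstacle here; the only subtlety is ensuring the hypothesis $\delta<\min\{1,\eta\}$ of Lemma \ref{ordering} holds, which is immediate from $\delta<\min\{\eta/2,1/2\}$.
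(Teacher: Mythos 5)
Your proof is correct and follows essentially the same route as the paper's: invoke Lemma \ref{ordering} to force the sign pattern, extract the one-sided inequality $-\delta\le\omega_1-\omega_2-\omega_3$, and use $\omega_i\in[\min\{1,\eta\},\max\{1,\eta\}]$ to conclude. The only difference is that you spell out the final case split more explicitly, which the paper leaves implicit.
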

\begin{proof}
Let $\omega_1\geq \omega_2 \geq \omega_3$ be the ordered eigenvalues, in decreasing order, from Lemma \ref{ordering}.
We have: $-\delta\le \omega_1-\omega_2-\omega_3$, due to near resonance and Lemma \ref{ordering}. Since $\omega_1,\omega_2,\omega_3 \in  \left[\min\{1,\eta\},\max\{1,\eta\}\right]$,  the claim follows.
\end{proof}
\begin{corollary}\label{orderingsmaller}
 Let $\delta \in { [}0,\min \{\frac{\eta} 2, \frac 1 2\})$ and $\eta\neq 1$ be fixed. Then, under the assumptions of  Lemma \ref{ordering}, we have
\[
\omega_3 \leq\frac{\max\{\eta, 1\}+\delta} 2.
\]
\end{corollary}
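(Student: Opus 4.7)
The plan is to read off the quantitative near--resonance inequality coming from Lemma \ref{ordering} and chain it with the monotonicity of the ordered triple, noting that the universal range of the dispersion relation values is $[\min\{1,\eta\},\max\{1,\eta\}]$.

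Concretely, by Lemma \ref{ordering} the only admissible sign patterns are $(+,-,-)$ and $(-,+,+)$, so the near--resonance condition $|\sigma_1\omega_1+\sigma_2\omega_2+\sigma_3\omega_3|\le\delta$ collapses, up to an overall sign, to
\[
|\omega_1-\omega_2-\omega_3|\le\delta.
\]
In particular $\omega_2+\omega_3\le\omega_1+\delta$. Using the ordering assumption $\omega_2\ge\omega_3$ I would then estimate $2\omega_3\le\omega_2+\omega_3\le\omega_1+\delta$.

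Finally, since each $\omega_j\in[\min\{1,\eta\},\max\{1,\eta\}]$ (this is exactly the fact used in Corollary \ref{orderingeta}), we have $\omega_1\le\max\{\eta,1\}$, and hence
\[
\omega_3\le\frac{\omega_1+\delta}{2}\le\frac{\max\{\eta,1\}+\delta}{2},
\]
which is the desired conclusion. There is no real obstacle here: the proof is a two-line consequence of the sign analysis already carried out in Lemma \ref{ordering} combined with the elementary bound on $\omega_k$. The only thing worth double-checking is that the assumption $\delta<\min\{\eta/2,1/2\}$ together with $\eta\ne1$ is used solely to guarantee, via Corollary \ref{orderingeta}, that the hypothesis set of Lemma \ref{ordering} is non-vacuous; it plays no further role in the inequality itself.
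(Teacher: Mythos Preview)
Your proof is correct and follows essentially the same route as the paper: use the sign pattern from Lemma~\ref{ordering} to reduce the near-resonance condition to $|\omega_1-\omega_2-\omega_3|\le\delta$, combine with $\omega_2\ge\omega_3$ to get $2\omega_3\le\omega_1+\delta$, and finish with the trivial bound $\omega_1\le\max\{\eta,1\}$. Your closing remark about Corollary~\ref{orderingeta} is slightly off (that corollary is a consequence, not a prerequisite, of Lemma~\ref{ordering}; the constraint $\delta<\min\{\eta/2,1/2\}$ is what ensures $\omega_3>\delta$ so that Lemma~\ref{ordering} applies), but this does not affect the argument.
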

\begin{proof}
  The near resonance condition and { Lemma \ref{ordering}} directly imply that
$-\delta\le\omega_1-2\omega_3$, with the claim following from the trivial upper bound on $\omega_1$.
\end{proof}
\end{subsection}
\begin{subsection}{Elliptic integrals}
In order to calculate the volume of the set $V_{n, \sigma_1, \sigma_2}$, we follow the strategy of \cite[Section 5]{BCZNS} which leads to the study of certain elliptic integrals. First, we recall that $ n\in\Z^3\setminus\{\vec 0\}$ and $\delta>0$ are fixed.  Since the volume element of the integral under consideration scales like $\sL_1\sL_2\rd \ck = \rd k$, it suffices to compute the volume for the case $\sL_1=\sL_2=1.$

 Next, the  volume integral 
\[
\int_{\R^3} \ind_{\vn}(k) \rd k
\]
 is expressed in spherical coordinates   $\left(\lambda_k,  \theta_k, \phi_k \right)  \in [\frac 1 2, 1] \times [0, \pi] \times  [0, 2\pi]$, with a rescaled radial component $\lambda_k$, similarly to the proof of Lemma \ref{mixedvolumel}, but with $M=|n|$. In particular, $\lambda_k:=\frac{|k|}{|n|}$, thus 
\[
 k =|n|\lambda_k (\sin\theta_k \cos \phi_k, \sin \theta_k \sin \phi_k, c_k),\quad\text{for}\quad k\in\Z^3.
\]
First, a change of variable from the azimuthal angle to $c_m$ is performed. We define the set
\[
A^F := \{(c_k,c_m)\in[-1,1]^2:\min_{(\sigma_1,\sigma_2)\in\{\pm\}^2}| \sigma_1{ \omega_{n}}+\sigma_2 { \omega_k}+\omega_m|\leq \delta \}.
\]
 An initial estimate 
\[
vol(V_{n, \sigma_1, \sigma_2}  ) \leq 16  |n|^3 \int_0^\pi    \int_{\frac 1 2}^1 \int_{-1}^1  \ind_{A^F} \mq^*(\lambda_k,\phi_k,\theta_k) |c_m| \sin\theta_k  \rd c_m  \rd \lambda_k \rd \theta_k
\]
can be then obtained, after examining the constraints on the magnitude of the wavevectors. In that context,  the expression $\mq^* |c_m|$, which depends on all three variables $(\lambda_k, \phi_k,\theta_k)$,  is a result of the non-zero Jacobian after our change of variable.
Then, we define
\[
\mQ:= |c_m|\int_{\frac 1 2}^1 \mq^* \rd\lambda_k,
\]
so that it suffices to estimate an expression of the form
\[
16  |n|^3  \int_{-1}^1  \int_0^\pi   \ind_{A^F}   \mQ(\theta_k, c_m) \sin\theta_k  \rd \theta_k \rd c_m,
\]
due to  the Fubini-Tonelli theorem.
The behaviour of $\mQ$ can be further quantified using the theory of elliptic integrals. In particular, we  recall  \cite[Lemma 5.4]{BCZNS}.
\begin{lemma}\label{decreasingL}
 Let $c_m,c_k,c_n \in (-1,1)\setminus\{0\}$ be pairwise distinct. Moreover, let $\varsigma_i$, $i=1,\ldots,4$, denote the elements of the set $\{1,|c_k|,|c_m|,|c_n|\}$ arranged in decreasing order.  Then the following statement holds true:
\begin{equation}\label{Q1}
\mQ \leq \frac C {\sqrt{(1 -\varsigma_3) (\varsigma_2 - \varsigma_4)}}  \left[{1+\log\sqrt{\frac {(1 -\varsigma_3)} {(1 - \varsigma_4)}\frac {(\varsigma_2 -\varsigma_4)} {(\varsigma_2 - \varsigma_3)}}}\right],
\end{equation}
for a constant $C>0$.
In addition, we have the following estimate:
\begin{equation}\label{Q3}
\mQ \lesssim \frac 1 {\sqrt{(1-\varsigma_3)(\varsigma_2 -\varsigma_4)}},\quad\text{when}\quad |c_n|\leq \min \{|c_m|,|c_n|\}.
\end{equation}
\end{lemma}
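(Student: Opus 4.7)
The plan is to reduce $\mQ$ to a standard incomplete elliptic integral and then invoke classical bounds for the complete elliptic integral of the first kind. The starting point is to make the Jacobian $\mq^*$ arising from the change of variable $\phi_k\mapsto c_m$ fully explicit. Using the two identities
\[
|\cm|^2 = |n|^2\bigl(\lambda_k^2 + 1 + 2\lambda_k c_k c_n + 2\lambda_k s_k s_n \cos(\phi_k-\phi_n)\bigr),\qquad c_m\,|\cm| = |n|(\lambda_k c_k+c_n),
\]
where $s_k=\sin\theta_k$, one expresses $c_m^2$ rationally in the single variable $u=\cos(\phi_k-\phi_n)$ at fixed $(\lambda_k,\theta_k)$ and inverts via the implicit function theorem. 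A direct computation shows that $\mq^*|c_m|$ takes the form $1/\sqrt{R(\lambda_k)}$ for a quartic polynomial $R$ whose four roots are determined by the quantities $\{1,|c_k|,|c_m|,|c_n|\}$. This structural identification is the algebraic heart of the proof.

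With this form in hand, $\mQ=\int_{1/2}^{1} \mq^* |c_m|\,d\lambda_k$ is an incomplete elliptic integral, and enlarging the domain of integration upward converts it to a complete elliptic integral of the first kind $K(\kappa)$ up to a normalization. The reduction to Legendre form produces the prefactor $1/\sqrt{(1-\varsigma_3)(\varsigma_2-\varsigma_4)}$ naturally, with modulus satisfying
\[
1-\kappa^2 \;=\; \frac{(1-\varsigma_4)(\varsigma_2-\varsigma_3)}{(1-\varsigma_3)(\varsigma_2-\varsigma_4)}.
\]
The classical near-singular expansion $K(\kappa)\lesssim 1+\log\frac{1}{\sqrt{1-\kappa^2}}$ then yields precisely the logarithmic factor in \eqref{Q1}. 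For the sharper bound \eqref{Q3}, the hypothesis $|c_n|\leq\min\{|c_k|,|c_m|\}$ forces $\varsigma_4=|c_n|$ and keeps $\lambda_k\in[1/2,1]$ away from the singular root of $R$, so the logarithm is absorbed into a constant and one gets the cleaner estimate.

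The main obstacle is the identification step: verifying that the quartic $R(\lambda_k)$ factors, uniformly in all possible orderings of $\{1,|c_k|,|c_m|,|c_n|\}$, into pieces producing the symmetric form above. The four quantities play symmetric roles only \emph{after} the decreasing rearrangement is taken, so one has to treat several orderings separately and check that each yields the same bound; this casework is precisely what the compact formulation in terms of $\varsigma_i$ hides. A secondary technicality is monitoring the signs of $c_k,c_m,c_n$ (allowed to vary in $(-1,1)\setminus\{0\}$) when performing the change of variable, since the sign of the Jacobian and the domain of definition of the inverse map $c_m\mapsto\phi_k$ change as one crosses the stationary azimuths $\phi_k=\phi_n+\iota\pi$ identified in the proof of Lemma \ref{areaL}. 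Once both are handled, the bound follows uniformly.
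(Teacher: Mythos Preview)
The paper does not give its own proof of this lemma: it is stated as a recollection of \cite[Lemma 5.4]{BCZNS} and used as a black box in the volume estimate of Theorem \ref{volumeT}. So there is no in-paper argument to compare against.

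That said, your sketch follows the expected route for this kind of statement, and is almost certainly what the cited reference does: identify $\mQ$ as an incomplete elliptic integral via the change of variable $\phi_k\mapsto c_m$, extract the Legendre prefactor $[(1-\varsigma_3)(\varsigma_2-\varsigma_4)]^{-1/2}$ and modulus from the four roots $\{1,|c_k|,|c_m|,|c_n|\}$ of the underlying quartic, then apply the classical bound $K(\kappa)\lesssim 1+\log(1/\sqrt{1-\kappa^2})$. Your cross-ratio formula for $1-\kappa^2$ is the correct one for this reduction, and your reading of \eqref{Q3} is right: the hypothesis (despite the typo in the statement, which should read $|c_n|\le\min\{|c_k|,|c_m|\}$) pins $\varsigma_4=|c_n|$ and keeps the integration contour away from the nearby pair of roots responsible for the logarithmic blow-up, so $K$ stays bounded.

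The one place your write-up remains a plan rather than a proof is the ``algebraic heart'': you assert that $\mq^*|c_m|=1/\sqrt{R(\lambda_k)}$ with $R$ a quartic whose roots are governed by $\{1,|c_k|,|c_m|,|c_n|\}$, but you do not exhibit the factorisation or verify that the roots are real and correctly ordered across all sign patterns of $c_k,c_m,c_n$. That computation is genuinely tedious (as you note, the ordering of the $\varsigma_i$ changes the Legendre reduction) and is exactly what the compact formulation in terms of decreasing rearrangement is designed to absorb; to upgrade your sketch to a proof you would need to carry it out in at least one representative ordering and explain why the others follow by symmetry.
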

 \begin{remark}
Combining \eqref{Q1} with the inequality $\log x \leq x-1$,
we obtain the following estimate
\begin{equation}\label{Q2}
\mQ \lesssim \dfrac 1 {\sqrt{1 - \varsigma_4}\sqrt{\varsigma_2 - \varsigma_3}} \lesssim_{\eta} \dfrac 1 {\sqrt{\varsigma_2 - \varsigma_3}},
\end{equation}
with the second inequality holding true in view of { Corollary} \ref{orderingsmaller}.
\end{remark}
A technical necessity  that stems from the previous Lemma is the consideration of the possible orderings of $|c_k|, |c_m|, |c_n|.$   However, the number of cases we need to take into account is limited, as  the sign of the term corresponding to the wavevector labelled with $m$ is always positive. In particular,  Lemma \ref{ordering} yields 6 possible cases.

\end{subsection}
\begin{subsection}{The volume estimate for FFF  interactions}
In view of Lemmas \ref{ordering} and \ref{decreasingL}, we can carry out a more refined estimate compared to that of \cite{BCZNS}.   In more detail,  $\omega_k,\omega_m,\omega_n$ are strictly bounded away from zero, a property which was not available in \cite{BCZNS}, while  their ordering with respect to $c_k,c_m,c_n$ is either fully preserved or reversed.
\begin{theorem}\label{volumeT}
Let $\delta \in \left(0, \min \{\frac{\eta} 2,\frac 1 2\}\right)$, $\eta\neq 1$,  $\sigma_1, \sigma_2 \in \{\pm \}$ and $n \in \Z^3 \setminus  \{\vec 0\}$  be fixed. Moreover, consider the set
\[
V_{n, \sigma_1, \sigma_2} : = \left\{k \in \R^3 \setminus \{\vec 0,n\}: |F_{n, \sigma_1, \sigma_2}(k)|\leq \delta \quad\text{and}\quad \frac 1 2 |\cn| \leq |\ck| \leq |\cn| \right\}. 
\]
Then, the following estimate holds true
{\begin{equation}\label{volumeE}
vol(V_{n, \sigma_1,\sigma_2})  \leq C(\eta)\delta \sL_1 \sL_2 |\cn|^3,
\end{equation}
for a positive  constant $C$} independent of $\delta, \sL_1, \sL_2, n, \sigma_1, \sigma_2.$
\end{theorem}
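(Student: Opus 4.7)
The approach follows the strategy of \cite[Section 5]{BCZNS}, refined by the sign and magnitude information supplied by Lemma \ref{ordering} and Corollaries \ref{orderingeta}, \ref{orderingsmaller}. The scaling $\sL_1\sL_2\,\rd\ck=\rd k$ lets me reduce to $\sL_1=\sL_2=1$, with the factor $\sL_1\sL_2$ in \eqref{volumeE} restored at the end by rescaling. Introducing rescaled spherical coordinates $(\lambda_k,\theta_k,\phi_k)\in[\tfrac12,1]\times[0,\pi]\times[0,2\pi]$ with $\lambda_k=|k|/|\cn|$ extracts the factor $|\cn|^3$, and after changing variable from $\phi_k$ to $c_m$ and integrating out $\lambda_k$ as in the excerpt, the task reduces to proving
\[
\int_{-1}^{1}\int_0^\pi \ind_{A^F}(\theta_k,c_m)\,\mQ(\theta_k,c_m)\,\sin\theta_k\,\rd\theta_k\,\rd c_m \;\lesssim_{\eta}\; \delta.
\]

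The gain $\delta$ comes from the $c_m$-direction. The near-resonance constraint $|\sigma_1\omega_k+\sigma_2\omega_n+\omega_m|\le\delta$ confines $\omega_m$ to an interval of length $2\delta$; via $c_m^2=(\omega_m^2-1)/(\eta^2-1)$ this yields a $c_m$-set of measure $O_\eta(\delta)$ in the generic regime where $\omega_m$ is bounded away from $1$. To control the remaining integrand I split the rectangle $(\theta_k,c_m)\in[0,\pi]\times[-1,1]$ into a bounded number of subregions on which the ordering of $\{1,|c_k|,|c_m|,|c_n|\}$, hence of the $\varsigma_i$'s of Lemma \ref{decreasingL}, is fixed. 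By Lemma \ref{ordering} and the strict monotonicity of $c\mapsto\omega(c)=\sqrt{1+(\eta^2-1)c^2}$ in $|c|$, only finitely many orderings can occur inside $A^F$, and on each piece I apply one of \eqref{Q1}, \eqref{Q3}, \eqref{Q2}. The key improvement over \cite{BCZNS} is that Corollaries \ref{orderingeta} and \ref{orderingsmaller} confine $\omega_k,\omega_m,\omega_n$ to a closed subinterval of $[\min\{1,\eta\},\max\{1,\eta\}]$ bounded away from its endpoints, which keeps $1-\varsigma_3$ and $1-\varsigma_4$ uniformly positive and eliminates the logarithmic loss found in the Navier--Stokes analogue.

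The main obstacle is the coalescence regime where two of $|c_k|,|c_m|,|c_n|$ are nearly equal and $\mQ$ blows up like $(\varsigma_2-\varsigma_3)^{-1/2}$. I plan to integrate $c_m$ first to extract the $\delta$, and then absorb the remaining $(\varsigma_2-\varsigma_3)^{-1/2}$ via an integrable square-root singularity in $\theta_k$, using that $\varsigma_2-\varsigma_3$ is a smooth function of $c_k=\cos\theta_k$ with non-degenerate zeros on each piece. A secondary subtlety is the vanishing of $\rd\omega_m/\rd c_m=(\eta^2-1)c_m/\omega_m$ at $c_m=0$: on the exceptional sliver where $\omega_m\approx 1$ the $c_m$-measure of the near-resonance slab is only $O(\sqrt\delta)$, but the same mechanism then forces $\omega_k$ or $\omega_n$ close to $\pm 1$, producing a matching $O(\sqrt\delta)$ gain in the $\theta_k$-integration so that the total contribution is again $O(\delta)$. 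Restoring the $\sL_1\sL_2$ scaling completes the proof of \eqref{volumeE}.
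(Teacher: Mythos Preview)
Your overall framework---reduction to the weighted $(c_k,c_m)$ integral, case splitting by the ordering of $|c_k|,|c_m|,|c_n|$, and application of the $\mQ$ bounds from Lemma~\ref{decreasingL}---matches the paper. However, two of your key mechanisms are incorrect as stated.

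\textbf{(i) The uniform positivity of $1-\varsigma_3$ fails for $\eta<1$.} Your claim that Corollaries~\ref{orderingeta} and~\ref{orderingsmaller} keep both $1-\varsigma_3$ and $1-\varsigma_4$ bounded below is only half right. For $\eta<1$ the middle cosine $\varsigma_3$ corresponds to the middle frequency $\omega_2$, and nothing prevents $\omega_2$ from approaching $\eta$ (e.g.\ $\omega_2=\omega_3=\eta$, $\omega_1=2\eta$, which satisfies near resonance and the range constraint from Corollary~\ref{orderingeta}); then $\varsigma_3\to1$. This is precisely the case where the paper must use the full logarithmic bound \eqref{Q1}: the stratification-dominated $A_1\cap B_1$ computation keeps the log factor and removes it only after a further change of variable $\omega_k\mapsto y$ leading to $\int_0^1(C_1-\log y)\,\rd y<\infty$. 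Your shortcut does not cover this.

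\textbf{(ii) The Jacobian degeneration at $c_m=0$ is handled by the convolution constraint, not by a ``matching $\sqrt\delta$'' argument.} Your assertion that $\omega_m\approx1$ ``forces $\omega_k$ or $\omega_n$ close to $\pm1$'' is wrong: $\omega_n$ is fixed by $n$, and near resonance then pins $\omega_k$ near $\omega_n\mp1$, which need not be close to $1$. The paper never meets this degeneration because it always changes variable in a frequency that is \emph{provably} bounded away from $1$. The non-obvious case is when $c_n$ is the largest cosine and the middle one (say $c_m$) could be small; there the paper invokes the convolution identity $n_3=-(k_3+m_3)$ together with $\tfrac12|\cn|\le|\ck|\le|\cn|$ (hence $|m|\le2|\cn|$) to obtain $|c_n|\le|c_k|+2|c_m|\le3|c_m|$, and since $\omega_n\ge2-\delta$ forces $|c_n|\gtrsim_\eta1$ in this sub-case, one gets $|c_m|\gtrsim_\eta1$ and thus $\omega_m>\sqrt{41}/6$. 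This use of the third-component convolution inequality is the missing ingredient in your sketch; without it the $c\to\omega$ change of variable cannot be controlled in the way you need.
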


\begin{proof}
First, we recall the fact that $\omega_k= f( |c_k|)$  for an either strictly increasing function or strictly decreasing function $f_\eta$, namely $f_\eta (x):=\sqrt{1 + (\eta^2 - 1) x^2}$.  The monotonicity of $f_\eta$ depends on the region of $\eta$ under consideration, when $x>0$. We initially focus on the case $1<\eta$, where $f_\eta$ is strictly increasing.  Then, a reversal in monotonicity for the remaining $\eta'$s will still yield the result, up to some necessary permutations of the wavenumbers involved.

We introduce the following sets:
\[\begin{aligned}
A_0&:=\big\{(c_k,c_m)\in[-1,1]^2 : c_kc_m(1-c_k^2)(1-c_m^2)(c_m^2- c_k^2)(c_k^2-c_n^2)(c_n^2-c_m^2)=0\big\},\\
A_1&:=\big\{(c_k,c_m)\in[-1,1]^2 : |c_k|<\min\{|c_n|,|c_m|\}\big\}\setminus A_0,\\
A_2&:=\big\{(c_k,c_m)\in[-1,1]^2:|c_m|<\min\{|c_n|,|c_k|\}\big\}\setminus A_0, \\
A_3&:=\big\{(c_k,c_m)\in[-1,1]^2:|c_n|<\min\{|c_k|,|c_m|\}\big\}\setminus A_0,\\
B_1&:=\big\{(c_k, c_m)\in[-1,1]^2:|c_n|<|c_m| \big\},\\
B_2&:=\big\{(c_k,c_m)\in[-1,1]^2:|c_k|<|c_m| \big\}.
\end{aligned}\]
It suffices to prove \eqref{volumeE} for $n$ in a dense subset of $\R^3,$ so that we can exclude some problematic values.
In particular, we fix $n\in \Z^{3}\setminus \{\vec 0\}$, with $n_1 n_2 n_3 \neq 0$. Then, for $i=1,2,3$, we define
\[A_i^F := A_i  \cap A^F.
\]
We will extensively use the  following  relations in what follows:
\begin{equation}\label{changec}
 d c_k = \pm\frac 1 {\sqrt {|\eta^2 -1| }} \frac {\omega_k} {\sqrt{|\omega_k^2 - 1|}} d \omega_k\quad\text{and}\quad c_k^2 - c_m^2 = \frac {\omega_k^2 - \omega_m^2} {\eta^2 - 1},
\end{equation}
which hold true for any combination of $c_k,c_n,c_m$. Finally, we recall that due to Corollary \ref{orderingeta}  we need to examine  the  
rotation dominated regime, $\eta \geq 2-\delta$, and the stratification dominated regime, $\eta \leq \frac{\delta+1} 2,$   separately.

\underline{\textbf{The rotation dominated regime}}\\
 When $\ind_{A_1} = 1$,  the estimate \eqref{Q2} is applicable. Nevertheless, we need to take into account whether $|c_n|< |c_m|$ or not. In the former case, the only possible choice of signs for $(\omega_n, \omega_k)$ is $(-, -)$, due to Lemma \ref{ordering}.  Then, the following relations hold true
\begin{equation}\label{divisorlimits}
 \omega_m- \omega_n\in (\omega_k - \delta, \omega_k + \delta)\quad\text{and}\quad
\sqrt 2 \sqrt{( \omega_k -\delta)} < \sqrt{\omega_m^2 - \omega_n^2 },
\end{equation}
 as a consequence of the near resonance condition and our ordering considerations. As all the estimates depend on the absolute value of the quantities under consideration, we restrict our attention to the case  $c_k, c_m>0.$
Then we use \eqref{Q2},  substituting $c_m$ with $\omega_m$ in the volume integral, in conjunction with \eqref{changec} and \eqref{divisorlimits}, so that
\begin{align*}
&\int_0^1 \int_{|c_n|}^{1} \dfrac{\ind_{A_1^F\cap B_1}}{\sqrt{c_m^2 -c_n^2}} dc_m dc_k  = \int_0^{|c_n|} \int_{\omega_k + \omega_n - \delta }^{\omega_k + \omega_n + \delta}\dfrac{1}{\sqrt{\omega_m^2 -\omega_n^2}}\frac {\omega_m} {\sqrt{\omega_m^2 - 1}} \rd \omega_m \rd c_k \\
& <  \sqrt2 \delta \int_{0}^{|c_n|} \frac 1 { \sqrt{( \omega_k -\delta)}} \frac {\omega_k+ \omega_n - \delta} {\sqrt{(\omega_k + \omega_n - \delta)^2 - 1}}dc_k.
\end{align*}
Since $\delta \leq \frac 1 2$ and $1\leq \omega_k, \omega_n$, the last integrand is uniformly bounded, thus proving our claim in that context.

The case where $(\omega_m,\omega_n, \omega_k)$ have sign $(+,-,+)$ requires a more delicate treatment, under the restriction that $\omega_m < \omega_n.$ First, we have the analog of \eqref{divisorlimits}
\begin{equation}\label{divisorlimits1}
   \omega_n - \omega_m \in(\omega_k - \delta,  \omega_k + \delta)\quad\text{and}\quad
\sqrt 2 \sqrt{( \omega_k -\delta)} < \sqrt{\omega_n^2 - \omega_m^2 }.
\end{equation}
Moreover, the near resonance condition together with \eqref{divisorlimits1}, the ordering imposed on the magnitude of the wavevectors, and the fact that $\delta< \frac 1 2$ imply that:
\begin{equation*}
\frac {\sqrt 5}  {2 \sqrt{\eta^2 -1}}<\frac {\sqrt {(2-\delta)^2 - 1} }{\sqrt{\eta^2 -1}}\leq |c_n|\leq \frac {|k_3|}{|\cn|} +\frac {|m_3|}{|\cn|}\leq |c_k| +2 |c_m|\leq 3|c_m|,
\end{equation*}
i.e.
$\omega_{m}>\frac{\sqrt{41}} 6.$ We recall that \eqref{Q2} is applicable.
Then, our last estimate on $\omega_m,$ combined with the fact that $\omega_k - \delta>\frac 1 2$, a change of variable from $c_m$ to $\omega_m$, \eqref{changec} and  \eqref{divisorlimits1}, allow us to estimate:
\begin{align*}
&\int_0^1 \int_{-1}^1 \dfrac{\ind_{A_1^F\setminus B_1}}{\sqrt{c_n^2 -c_m^2}}dc_m dc_k  <  \int_0^{|c_n|} \int_{\omega_n - \omega_k - \delta }^{\omega_n - \omega_k + \delta}\dfrac{1}{\sqrt{\omega_n^2 -\omega_m^2}}\frac {\omega_m} {\sqrt{\omega_m^2 - 1}} \rd \omega_m \rd c_k \\
& < C \delta \int_0^{|c_n|}  \frac 1 { \sqrt{( \omega_k -\delta)}} \rd c_k < C\delta,
\end{align*}
for an absolute constant $C>0$. 

The case $(c_k,c_m)\in {A_2^F}$  can be handled analogously, by permuting the roles of the three wavevectors where necessary. 

As far as the case $\ind_{A_3}=1$ is concerned, we further distinguish cases. If $|c_k|<|c_m|$, then the signs for $(\omega_m,\omega_n, \omega_k)$ are  $(+,-,-)$. Thus,  we have
\begin{equation}\label{divisorlimits2}
 \omega_m - \omega_n \in(\omega_k - \delta,  \omega_k + \delta). 
\end{equation}
We use \eqref{Q3}, change variables from $c_m$ to $\omega_m$ and recall  \eqref{changec},\eqref{divisorlimits2}, in order to deduce that
\begin{align*}
&\int_{0}^1 \int_{-1}^1 \dfrac{\ind_{A_3^F \cap B_2 }}{\sqrt{c_m^2 -c_n^2}}\dfrac{1}{\sqrt{1 -c_k^2}}dc_m dc_k  =  \int_{|c_n|}^{1} \int_{\omega_k + \omega_n - \delta }^{\omega_k + \omega_n + \delta}\dfrac{1}{\sqrt{\omega_m^2 -\omega_n^2}}\frac {\omega_m} {\sqrt{\omega_m^2 - 1}}\frac {1} {\sqrt{1 - c_k^2 }} d \omega_m dc_k \\
& < \sqrt2 \delta \int_{|c_n|}^{1} \frac 1 { \sqrt{( \omega_k -\delta)}} \frac {\omega_k+ \omega_n - \delta} {\sqrt{(\omega_k + \omega_n - \delta)^2 - 1}} \frac {1} {\sqrt{1 - c_k^2 }}\rd c_k\\
&\leq \sqrt2 C\delta,
\end{align*}
for a constant $C>0$ independent of $\delta,n,\eta$. Finally, when $|c_m|<|c_k|$ our last estimate still goes through, up to reversing the roles of $k$ and  $m$.\\
\underline{\textbf{The stratification dominated regime}}\\
The case $2\delta  < \eta<1$ presents a reversal in monotonicity.

 In more detail, when $\ind_{A_1} = 1$ and $|c_n|<|c_m|,$  then Lemma \ref{ordering} implies that  $(\omega_n, \omega_k)$ necessarily have  opposite signs and
\begin{equation}\label{divisorlimits3}
\omega_k-\omega_n\in(\omega_m-\delta,\omega_m+\delta).
\end{equation}
We will use \eqref{Q1} and change coordinates first from $(c_k,c_m)$ to $(\omega_k,\omega_m)$ and then  from $(\omega_k, \omega_m)$ to $(\omega_k,-\omega_n+\omega_k+\delta'),$
so that $\delta'\in(-\delta,\delta)$ by \eqref{divisorlimits3}.
We also have
\begin{itemize}
\item $\omega_k^2 -\omega_m^2<\omega_k^2 -\eta^2,$ using the trivial lower bound for $\omega_m$,
\item $\omega_k^2 -\omega_m^2 >(\omega_n -\delta)(\omega_k+\omega_m)>\eta^2 ,$ by \eqref{divisorlimits3} and the fact that $\delta<\frac \eta 2$,
\item $\frac {\omega_m} {\sqrt{1 - \omega_m^2}}\leq C$  by Corollary \ref{orderingsmaller},
\item $\omega_k\in(\eta+\omega_n+\delta',\min\{1,2\omega_n+\delta'\}):=(b,a)$, due to the fact that $\omega_m<\omega_n.$
\end{itemize}
Thus, via \eqref{Q1}, \eqref{changec} and our change of variables
\begin{align*}
&\int_0^1 \int_0^1 \frac {\ind_{A_1^F\cap B_1} \left[{1+\log\sqrt{\frac {(1 -c_n^2)} {(1 - c_k^2)}\frac {(c_m^2 -c_k^2)} {(c_m^2- c_n^2)}}}\right]} {\sqrt{(1 -c_n^2) (c_m^2 - c_k^2)}} \rd c_k \rd c_m\\
&=\int_0^1 \int_0^1 \frac {\ind_{A_1^F\cap B_1}\left[{1+\log\sqrt{\frac {(\omega_n^2-\eta^2)} {(\omega_k^2 -\eta^2)}\frac {(\omega_k^2 -\omega_m^2)} {(\omega_n^2- \omega_m^2)}}}\right]} {\sqrt{(\omega_n^2-\eta^2) (\omega_k^2 - \omega_m^2)}}  \rd c_k \rd c_m\\
&=\frac{1}{1-\eta^2}\int_{\eta}^{\omega_n} \int_{\eta}^1 \frac {\ind_{A_1^F} \left[{1+\log\sqrt{\frac {(\omega_n^2-\eta^2)} {(\omega_k^2 -\eta^2)}\frac {(\omega_k^2 -\omega_m^2)} {(\omega_n^2- \omega_m^2)}}}\right]} {\sqrt{(\omega_n^2-\eta^2) (\omega_k^2 - \omega_m^2)}} \frac {\omega_k} {\sqrt{1-\omega_k^2}}\frac {\omega_m} {\sqrt{1-\omega_m^2}} \rd \omega_k \rd \omega_m\\
&\leq C\int_{\eta}^{\omega_n} \int_{\eta}^1  \frac {\ind_{A_1^F}\left[{C_1+\log\sqrt{\frac {(\omega_n-\eta)} {(\omega_n- \omega_m)}}}\right]} {\sqrt{(\omega_n-\eta) }}  \frac {1} {\sqrt{1-\omega_k}}\rd \omega_k \rd \omega_m\\
&=C\int_{-\delta}^{\delta} \int_{b}^a  \frac {\ind_{A_1^F}\left[{C_1+\log\sqrt{\frac {(\omega_n-\eta)} {(2\omega_n- \omega_k+\delta')}}}\right]} {\sqrt{(\omega_n-\eta) }}  \frac {1} {\sqrt{1-\omega_k}}\rd \omega_k \rd \delta'\\
&\leq C\int_{-\delta}^{\delta} \int_{b}^a  \frac {\left[{C_1+\log\sqrt{\frac {(\omega_n-\eta)} {(a- \omega_k)}}}\right]} {\sqrt{(\omega_n-\eta)(a-\omega_k) }}  \rd \omega_k \rd \delta'\\
&\leq C\int_{-\delta}^{\delta} \int_0^{\sqrt{\frac{a-b}{\omega_n-\eta}}}   \left[{C_1-\log y}\right]\rd y \rd \delta'\leq C \delta,
\end{align*}
where we further changed variables from $\sqrt{\frac {(a- \omega_k)}{(\omega_n-\eta)} }$ to $y$.  Note that the inner integral has upper limit not exceeding 1.

When $\ind_{A_1^F}=1$ and $|c_m|<|c_n|,$ we follow a similar procedure to the above. In particular, we take into account \eqref{divisorlimits3} and change coordinates from $(c_k,c_m)$ to $(\omega_k,\omega_m)$, followed by a change from$(\omega_k,\omega_m)$ to $(\omega_k, -\omega_n+\omega_k+\delta').$ Then, \eqref{divisorlimits3} implies that 
\begin{itemize}
\item $\omega_m <1-\frac \eta 2,$ 
\item $\omega_m-\omega_n\in(\max\{0,\omega_k-2\omega_n-\delta\},\omega_k-2\omega_n+\delta)$,
\end{itemize}
so that we can estimate
\begin{align*}
&\int_0^1 \int_{0}^{1} \dfrac{\ind_{A_1^F\setminus B_1}}{\sqrt{c_n^2 -c_m^2}} dc_k dc_m  =\frac{1}{\sqrt{1-\eta^2}}  \int_{\omega_n }^{\omega_k} \int_{\omega_m}^{1} \dfrac{\ind_{A_1^F}}{\sqrt{\omega_m^2 -\omega_n^2}}\frac {\omega_m} {\sqrt{1-\omega_m^2 }} \frac {\omega_k} {\sqrt{1-\omega_k^2 }} \rd \omega_k \rd \omega_m \\
& <  \frac{C}{\sqrt{1-\eta^2}} \int_{\omega_n }^{\omega_k}\int_{\omega_m}^{1} \dfrac{\ind_{A_1^F}}{\sqrt{\omega_m -\omega_n}} \frac {1} {\sqrt{1-\omega_k }} \rd \omega_k \rd \omega_m\\
&= \frac{C}{\sqrt{1-\eta^2}}  \int_{-\delta}^{\delta} \int_{2\omega_n - \delta'}^{1}\dfrac{1}{\sqrt{\omega_k - 2 \omega_n +\delta'}} \frac {1} {\sqrt{1-\omega_k }} \rd \omega_k \rd \delta'\\
&= \frac{C}{\sqrt{1-\eta^2}}  \int_{-\delta}^{\delta} \int_0^1\dfrac{1}{\sqrt{1-y}} \frac {1} {\sqrt{y }} \rd y\rd \delta'<C\delta,
\end{align*}
where we further changed variables as $y=\frac {1-\omega_k} {1-2\omega_n + \delta'}$.

{ In the case that $\ind_{A_2^F}=1,$ it suffices to repeat the preceeding steps, up to a permutation of the wavevectors.}

Finally, when $\ind_{A_3}^F=1,$ we give the details on the estimate for the case $|c_k|<|c_m|,$ i.e. when $\omega_m<\omega_k<\omega_n$.  Hence, Corollary \ref{orderingsmaller} implies that we have $\omega_m \leq\frac 3 4$. In addition, it holds that 
\begin{equation}\label{divisorlimits5}
\omega_m \in(\omega_n-\omega_k-\delta, \omega_n-\omega_k+\delta),
\end{equation}
 due to the near resonance condition. We change variables from $c_m$ to $\omega_m$, using \eqref{Q3}, \eqref{divisorlimits5}, the aforementioned boundedness of $\omega_m$ away from 1 and the fact that $2\eta<\omega_m+\omega_n$, in order to deduce:
\begin{align*}
&\int_{-1}^1 \int_{-1}^1 \dfrac{\ind_{A_3^F \cap {B_2}}}{\sqrt{c_m^2 -c_n^2}} \dfrac 1 {\sqrt{1-c_k^2}}\rd c_m \rd c_k  \\
& <  \int_{|c_n|}^1 \int_{\omega_n-\omega_k -\delta }^{\omega_n-\omega_k +\delta } \dfrac{\ind_{A_3}}{\sqrt{\omega_n^2 -\omega_m^2}}\frac {\omega_m} {\sqrt{1 - \omega_m^2 }}\dfrac 1 {\sqrt{1-c_k^2}}  \rd\omega_m \rd c_k \\
&< \frac {C }{\sqrt\eta}   \int_{|c_n|}^1 \int_{\omega_n-\omega_k -\delta }^{\omega_n-\omega_k +\delta }{\frac {1} {\sqrt{\omega_n -\omega_m }}}\dfrac 1 {\sqrt{1-c_k^2}}  \rd\omega_m \rd c_k \\
&< \frac {C \delta}{\sqrt\eta}   \int_{|c_n|}^1 \frac {1} {\sqrt{\omega_k -\delta }}\dfrac 1 {\sqrt{1-c_k^2}}   \rd c_k < \frac {C \delta}{\eta}   \int_{|c_n|}^1 \dfrac 1 {\sqrt{1-c_k^2}}   \rd c_k<C\delta. 
\end{align*}
\end{proof}
\end{subsection}
\end{section}
\begin{section}{Proof of the main results}\label{proofS}
\begin{subsection}{Proof of Theorem \ref{restrictedfT}(FFF estimates)}
 We prove Theorem \ref{restrictedfT} using a similar strategy to \cite[Theorem 1.3]{BCZNS} in the rotating Navier-Stokes context. 
\begin{proof}[Proof of Theorem \ref{restrictedfT}]
First, we recall the sign convention  for the bilinearity from \eqref{bcoeff} and the conjugation property of Remark \ref{urealR}. Then, we use  Parseval's identity and the symmetry between $m,n$ in the resulting sum, due to incompressibility, in order to get:
\begin{align*}
&2\ip{\pD^{\ell} \wt B_f (\umm_f,\vmm_f),\pD^{\ell} \vmm_f} \\
 &= \ri |\T| \skm \sum_{\sigma_1\sigma_2\sigma_3\neq0} \left( |\cn|^{2\ell} - |\cm|^{2\ell} \right)\cip{r_k^{\sigma_1}}{\cm'} \cip{r_m^{\sigma_2}}{{r_{n}^{\sigma_3}}} u_k^{\sigma_1}v_m^{\sigma_2}  v_{n}^{\sigma_3} \ind_{\cN^{\textnormal {FFF}}} (k,m,n).
\end{align*}
In the range of $\ell$ under consideration, we use   the triangle inequality and the mean value theorem to obtain $\left| |\cn|^\ell-|\cm|^\ell\right| \leq \ell |\ck| \max\{|\cn|^{\ell-1}, |\cm|^{\ell-1} \}$.  In addition,  $|\cip{r_k^{\sigma_1}}{\cm'}| \leq \min \{|\cm|,|\cn|\}$ holds true, due to incompressibility. Then,  we complete the proof by combining Lemma \ref{restricted:L} for $\beta=2$  and our counting results from Theorems \ref{counting3}, \ref{volumeT} together with the choice of $\delta$ specified in Theorem \ref{restrictedfT}.
\end{proof}
\end{subsection}
\begin{subsection}{Energy estimates in $L^2$}
As a starting step towards the proof of Theorem \ref{globalexistenceT}, we show that the standard $L^2$ energy inequality holds true for the restricted system \eqref{restrictedS}.
\begin{lemma}\label{L2le}
Let $\Ummt_0\in H^{\ell}(\T;\R^4)$  with $\ell\ge1$ be a divergence free and zero-mean vector field.
 If  $\Ummt$ is a solution of \eqref{restrictedS} with initial data $\Ummt_0$   for $t\in [0,T),$ then: 
\begin{equation}\label{L2}
\| \Ummt(T) \|_{L^2}^2 + 2 \nu_{min}  \int_0^T \|\Ummt \|_{H^1}^2 \rd t  \le \| \Ummt_0\|_{L^2}^2.
\end{equation}
\end{lemma}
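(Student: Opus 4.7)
The plan is to take the $L^2$ inner product of equation \eqref{restrictedS} with $\Ummt$ and identify a dissipative term, a nonlinear term that vanishes, and a wave term that vanishes, then integrate in time. More precisely, I would first record the identity
\begin{equation*}
\frac{1}{2}\frac{\rd}{\rd t}\|\Ummt\|_{L^2}^2 + \ip{\wt B(\Ummt,\Ummt),\Ummt} + \ip{\wt A\Ummt,\Ummt} = N\ip{\Le\Ummt,\Ummt},
\end{equation*}
which is justified by the regularity of $\Ummt$ (local-in-time strong solutions lie in $C([0,T);H^\ell) \cap L^2([0,T);H^{\ell+1})$ for $\ell\ge 1$, so all quantities are well-defined and integration by parts is valid).

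Next, I would dispose of the nonlinear and wave contributions. The nonlinear term vanishes at once by Corollary \ref{L2l0} applied with $\Umm=\Vmm=\Ummt$, since $\Ummt$ is divergence-free and zero-mean throughout the evolution. The wave term vanishes because $\Le$ is (the Leray projection of) a skew-adjoint operator with respect to the $L^2$ inner product — its Fourier symbol is anti-Hermitian — so $\ip{\Le\Ummt,\Ummt}=0$ for any real-valued $\Ummt$.

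For the dissipative term, I would use $\Ummt=\Ummt_s+\Ummt_f$ and the definition \eqref{laplacianmod} of $\wt A$, namely $\wt A\Ummt=-\wt\nu_{11}\Delta\Ummt_s-\wt\nu_{22}\Delta\Ummt_f$. Since $\wt\nu_{11}$ and $\wt\nu_{22}$ are scalar pseudodifferential operators of order zero that commute with the slow and fast projections, and since the slow and fast subspaces are mutually orthogonal in $L^2$ by Lemma \ref{orthogonalityL}, the cross terms $\ip{\wt\nu_{11}\Delta\Ummt_s,\Ummt_f}$ and $\ip{\wt\nu_{22}\Delta\Ummt_f,\Ummt_s}$ both vanish. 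Thus
\begin{equation*}
\ip{\wt A\Ummt,\Ummt}=-\ip{\wt\nu_{11}\Delta\Ummt_s,\Ummt_s}-\ip{\wt\nu_{22}\Delta\Ummt_f,\Ummt_f}\ge \nu_{min}\bigl(\|\Ummt_s\|_{H^1}^2+\|\Ummt_f\|_{H^1}^2\bigr)=\nu_{min}\|\Ummt\|_{H^1}^2,
\end{equation*}
where the inequality is Lemma \ref{ellipticity} with $\ell=0$ and the final equality again uses the slow/fast orthogonality. Combining everything yields $\tfrac{1}{2}\tfrac{\rd}{\rd t}\|\Ummt\|_{L^2}^2+\nu_{min}\|\Ummt\|_{H^1}^2\le 0$, and integrating from $0$ to $T$ delivers \eqref{L2}.

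There is no real obstacle here: the restricted bilinearity was engineered precisely so that the FFS and FSF pieces cancel via the common bandwidth $\delta^*$ (this is the content of Corollary \ref{L2l0}), and the modification of the viscosity was introduced exactly so that the slow and fast projections diagonalize $\wt A$, avoiding the unwanted mixing present for the original operator $A$. The only minor point that needs care is ensuring the cross terms in the dissipation truly vanish, which I would check by expanding in the eigenbasis $\{r_k^0,r_k^\pm\}$ via \eqref{parseval}, using the orthogonality relations $\cip{r_k^0}{\overline{r_k^\pm}}=0$.
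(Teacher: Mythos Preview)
Your proof is correct and follows essentially the same approach as the paper: test the equation against $\Ummt$, use skew-symmetry of $\Le$, Corollary \ref{L2l0} for the bilinear cancellation, and Lemmas \ref{orthogonalityL} and \ref{ellipticity} for the dissipative lower bound, then integrate in time. Your treatment of the dissipative term is slightly more explicit about the vanishing of the slow--fast cross terms, but the content is identical.
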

\begin{proof}
We test \eqref{restrictedS} with $\Ummt$, so that
\[
  \partial_t \|\Ummt \|_{L^2}^2 +2\ip{\wt A \Ummt, \Ummt}  + 2 \ip{\wt B(\Ummt,\Ummt),\Ummt} = 0
\] 
follows from the skew-symmetry of $\Le$.  Moreover,
 Lemma \ref{orthogonalityL} and Lemma \ref{ellipticity} imply that
$ {\nu_{min}}   \|\Ummt \|_{H^1}^2 \leq  \ip{\wt A \Ummt, \Ummt}.$
We  also have $\ip{\wt B (\Ummt,\Ummt),\Ummt} = 0$, due to Corollary \ref{L2l0},  with \eqref{L2} following after an integration in time.
\end{proof}
\end{subsection}
\begin{subsection}{Proof of Theorem \ref{restrictedT} and slow output estimates}\label{slowP}
We now present the proof of the FFS estimates of Theorem \ref{restrictedT}. Its main ingredients are Lemma \ref{restrictedl}, which is applicable due to Corollary \ref{mixedvolumec},  and a growth bound on the interaction coefficients for the mixed terms, which is based on the analysis of Appendix \ref{appendixs}.
\begin{proof}[Proof of Theorem \ref{restrictedT} ]
We recall   \eqref{bcoeff} and  Remark \ref{urealR}. Then, due to  the $k,m$ symmetry of $\ind_{\cN_{FFS}}(k,m,n)$,  the mixed part of our slow approximation is given by
\begin{align*}\label{slowcoef}
\wt B_s (\umm_f,\umm_f)& = \sum_{k,n,m;conv} \left( B_{kmn}^{+ - 0}(\umm,\umm) +  B_{kmn}^{- + 0}(\umm,\umm) \right)\ind_{\cN^{\textnormal {FFS}}}{(k,m,n)}\\
&= \ri|\T| \skm   S_{kmn}^{+-0} u_k^+ u^- _m r_n^0  \ind_{\cN^{\textnormal {FFS}}} (k,m,n),
\end{align*}
with the interaction coefficients defined via
\begin{equation}\label{slowcoeff}
S_{kmn}^{+-0} =  \left( \rfp\cdot \cm'\right)  \cip{\rfmm}{r_n^0 } +  \cip{\rfmm}{\ck' } \cip{\rfp}{r_n^0 }.
\end{equation}
Using Parseval's identity, we have
\[
\ip{\pD^{\ell} \wt B_s (\umm_f,\umm_f),\pD^{\ell}\wmm}  = \ri|\T| \skm |\cn|^{2 \ell}  S_{kmn}^{+-0} u_k^+ u_m^-  w_n^0  \ind_{\cN^{\textnormal {FFS}}}{(k,m,n)}.
 \]Moreover, we have  
\[
|S_{kmn}^{+-0}| \lesssim_{\eta} \left | (\omega_k - \omega_m) \right|  |\ck| |\cm||\cn|^{-1}
\]
via  Lemma \ref{scof}, {and $|\cn|^\ell \lesssim_\ell |\ck|^\ell+ |\cm|^\ell$ via the triangle inequality, when $\ell>0$.} Then we combine the preceding estimates, \eqref{bandwidthffs}, and Lemma \ref{restrictedl} for the resulting convolution sum, with $\mu=3 - \frac{\xi} 2$,  in order to obtain
\begin{align*}
&\left| \ip{\pD^{\ell}\wt B_s (\umm_f,\umm_f), \pD^{\ell}\wmm}  \right|  \lesssim_{\eta,\T} \skm\left | (\omega_k - \omega_m) \right|   |\ck| |\cm||\cn|^{2\ell-1}  |u_k^{+}| |u_m^{-}|  |w_{n}^0| \ind_{\cN^{\textnormal {FFS}}}{ (k,m,n)}\\
&\lesssim_{\eta, {\ell}}  \skm \delta^* (k,m,n)   \left(|\ck|^{\ell_1} + |\cm|^{\ell_1}\right) |\ck| |\cm|  |\cn|^{\ell_2}  |u_k^+| |u_m^-|  |w_{n}^0| \ind_{\cN^{\textnormal {FFS}}}(k,m,n)\\
&\lesssim_{\eta, {C_{\delta*}}}  \skm    \left(|\ck|^{\ell_1} + |\cm|^{\ell_1}\right)  |\ck|^{1-\frac \xi 2} |\cm|^{1-\frac \xi 2}  |\cn|^{\ell_2}  |u_k^+| |u_m^-|  |w_{n}^0| \ind_{\cN^{\textnormal {FFS}}} (k,m,n)\\
&\lesssim \|\pD^{\frac 5 2 - \frac {3\xi} 4 - a  +\ell_1}  \umm_f \|_{L^2} \|\pD^{1 + a-\frac \xi 2}  \umm_f \|_{L^2}    \|\pD^{\ell_2} \wmm_s \|_{L^2}.
\end{align*}
\end{proof}
\begin{proof}[Proof of  Corollary \ref{restrictedlpvC}]
Using Parseval's identity, and Lemma \ref{orthogonalityL} we have
\[
\ip{\lpv \wt B_s (\umm_f,\umm_f),\lpv \wmm}  = \ri|\T| \skm |\cn_\eta|^{2}  S_{kmn}^{+-0} u_k^+ u_m^-  w_n^0  \ind_{\cN^{\textnormal {FFS}}}{ (k,m,n)}.
 \]
 The result then follows from  adapting the proof of Theorem \ref{restrictedT}, by using a Fourier multiplier with symbol $|\cn_\eta|$ instead of $\pD$ and setting $\ell_1=0$ and $\ell_2=1$.
\end{proof}
We proceed by examining the  SSS term $\wt B_s ( \Ummt_s, \Ummt_s).$
In that direction, we remark that the following norm equivalence
\begin{equation}\label{normequivalence}
\min \{\eta^{-1}, 1\} \|\wt Q \|_{H^{\ell-1}} \leq \| \Ummt_s  \|_{H^{\ell}} \leq \max \{\eta^{-1}, 1\} \|\wt Q \|_{H^{\ell-1}}
\end{equation}
holds true due to  \eqref{eqlpvslow},  where $\wt Q=\lpv \Ummt$. Thus, in view of Lemma \ref{orthogonalityL}, and the commutability properties of
$\wt\nu_{11},\wt{\nu}_{22}$ in Section \ref{restrictedLB}, it suffices to derive estimates for the following system
\begin{align}\label{approxeqlpv}
& \partial_t  \wt Q+  \lpv \wt B_s  (\Ummt, \Ummt)- \wt \nu_{11}  \Delta \wt Q= 0 \\
& \partial_t  \Ummt_f + \wt B_f(\Ummt, \Ummt) - \wt \nu_{22} \Delta \Ummt_f = N \Le \Ummt_f \label{approxeqlpv1}
\end{align}
instead of \eqref{restrictedS}.
An advantage of working with the linear potential vorticity equation is that the SSS term,  even though present in the PDE \eqref{approxeqlpv}, plays no part  on the $L^2$ energy  level.
In more detail, we have the following result from \cite{BMN2}.
\begin{lemma}\label{slowtransport}
The SSS transport term $B_s(\Ummt_s,\Ummt_s)$ satisfies the following identities
\begin{equation}\label{slowtransportgain}
\lpv B_s(\Ummt_s,\Ummt_s) = - \left( \dsn{-1} \nabla_H^{\perp} \wt Q \right)\cdot \nabla_H \wt Q =-\nabla_H \cdot \left(  \wt Q \dsn{-1}\nabla_H^{\perp} \wt Q  		 \right).
\end{equation}
In particular,
\begin{equation}\label{slowtransportgainL2}
\ip{\lpv B_s(\Ummt_s,\Ummt_s),\wt  Q} = 0.
\end{equation}
In addition, the following estimate holds true
\begin{equation}\label{lpvslowgain1}
\left|\ip{\lpv B_s(\Ummt_s,\Ummt_s) , \ds{\ell} \wt Q }
\right| \lesssim  \|\wt Q \|_{H^{\frac 1 2}}\|\wt  Q \|_{H^{\ell}}\|\wt  Q \|_{H^{\ell+1}},
\end{equation}
for all $\ell\in \R^+$,  with the implied constant independent of $\Ummt.$
\end{lemma}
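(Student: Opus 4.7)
The overall strategy is to identify $\Ummt_s$ with the gradient of a quasigeostrophic stream function and then exploit the resulting two-dimensional transport structure. From the explicit form of the slow eigenvector $r_k^0$ in \eqref{defeigenvs1}, an elementary Fourier computation yields the representation
\[
\Ummt_s^\backprime=(\partial_2\Psi,\,-\partial_1\Psi,\,0)^\intercal,\qquad \rho_s=\eta\,\partial_3\Psi,\qquad \Psi:=\dsn{-1}\wt Q.
\]
In particular the vertical component of the slow velocity vanishes identically, and the horizontal velocity $u_H:=-\dsn{-1}\nabla_H^\perp\wt Q$ is two-dimensionally divergence-free, $\nabla_H\cdot u_H=0$. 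This structural fact drives the entire proof.

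Since the vertical slow velocity is zero, the convective operator collapses to $\Ummt_s^\backprime\cdot\nabla=u_H\cdot\nabla_H$. Combining this with two elementary observations --- that $\lpv$ is insensitive to fast modes, so $\lpv B_s(\Ummt_s,\Ummt_s)=\lpv B(\Ummt_s,\Ummt_s)$, and that $\lpv$ annihilates any pure-gradient velocity contribution, so $\lpv\leray V=\lpv V$ for any divergence-free 4-field $V$ --- one obtains
\[
\lpv B_s(\Ummt_s,\Ummt_s)=\lpv\bigl[(u_H\cdot\nabla_H)\Ummt_s\bigr]=(u_H\cdot\nabla_H)\wt Q+\bigl[\lpv,\,u_H\cdot\nabla_H\bigr]\Ummt_s.
\]
A direct expansion of the commutator using the stream-function representation produces six bilinear terms in second derivatives of $\Psi$ that cancel in antisymmetric pairs (for instance, $\partial_1\partial_2\Psi\cdot\partial_1^2\Psi-\partial_1^2\Psi\cdot\partial_1\partial_2\Psi$ from the $\partial_1 U_{s,2}$ contribution, with analogous cancellations from $\partial_2 U_{s,1}$ and from $\eta\partial_3\rho_s$). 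Hence $\lpv B_s(\Ummt_s,\Ummt_s)=u_H\cdot\nabla_H\wt Q$, and the alternative representation $-\nabla_H\cdot(\wt Q\,\dsn{-1}\nabla_H^\perp\wt Q)$ follows by $\nabla_H\cdot u_H=0$, establishing \eqref{slowtransportgain}. Testing against $\wt Q$ and using $\nabla_H\cdot u_H=0$ once more yields
\[
\ip{u_H\cdot\nabla_H\wt Q,\wt Q}=\tfrac 1 2\int_\T u_H\cdot\nabla_H\wt Q^2\,\rd x=-\tfrac 1 2\int_\T(\nabla_H\cdot u_H)\wt Q^2\,\rd x=0,
\]
which is \eqref{slowtransportgainL2}.

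For the bound \eqref{lpvslowgain1} I would integrate by parts in the transport form, again using $\nabla_H\cdot u_H=0$, to obtain
\[
\ip{\lpv B_s(\Ummt_s,\Ummt_s),\ds{\ell}\wt Q}=-\ip{\wt Q\,u_H,\,\nabla_H\ds{\ell}\wt Q},
\]
and then estimate the right-hand side by Cauchy-Schwarz in a fractional Sobolev pairing combined with the three-dimensional product estimate from Appendix \ref{appendixpe}. The orders are distributed so that $u_H$ absorbs (nearly) $3/2$ derivatives, which by $\|u_H\|_{H^s}\lesssim\|\wt Q\|_{H^{s-1}}$ delivers the $H^{1/2}$ factor on $\wt Q$, while $\wt Q$ itself carries $\ell$ derivatives and $\nabla_H\ds{\ell}\wt Q$ contributes the $H^{\ell+1}$ norm. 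The main technical subtlety is the borderline character of the Sobolev product rule at $s=3/2$ in three dimensions, which forces a routine $\varepsilon$-regularization (or equivalently a Bony paraproduct decomposition) in order to stay within its strict range of validity; this costs nothing in the final exponent count and delivers the claimed estimate $\lesssim\|\wt Q\|_{H^{1/2}}\|\wt Q\|_{H^\ell}\|\wt Q\|_{H^{\ell+1}}$ uniformly in $\Ummt$.
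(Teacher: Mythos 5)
The paper does not actually prove this lemma; it is stated as ``the following result from \cite{BMN2}.'' Your argument is a correct, self-contained reconstruction of the standard 3D-QG computation that underlies that citation: the slow projection $\Ummt_s$ is generated by the stream function $\Psi=\dsn{-1}\wt Q$, whence $\Ummt_s^\backprime$ has vanishing vertical component and horizontally divergence-free horizontal part, the commutator $[\lpv,\,u_H\cdot\nabla_H]\Ummt_s$ cancels termwise, and the trilinear bound follows from symmetrization plus the product estimate of Appendix~\ref{appendixpe}.

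Two small observations. First, regarding \eqref{lpvslowgain1}: your phrasing suggests estimating $\|\wt Q\,u_H\|_{H^\ell}$ by placing $u_H$ in $H^{3/2}$, which is exactly the endpoint where Corollary~\ref{kpsc} fails; this is not merely a cosmetic $\varepsilon$-regularization issue, since the target exponents $(\tfrac12,\ell,\ell+1)$ leave no room to spend an $\varepsilon$. The clean route is the one you mention in passing --- either a commutator decomposition with $\ip{u_H\cdot\nabla_H\ds{\ell/2}\wt Q,\ds{\ell/2}\wt Q}=0$, or a direct symmetrization on the Fourier side replacing $|\cn|^{2\ell}$ by $\tfrac12(|\cn|^{2\ell}-|\ck|^{2\ell})$ via the antisymmetry of the Jacobian; the latter gives a factor $\lesssim|\cm|\max(|\ck|,|\cn|)^{2\ell-1}$ that distributes cleanly as $1,\ell,\ell+1$ without hitting the $H^{3/2}$ endpoint. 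Second, your stream-function formula $\rho_s=\eta\partial_3\Psi$ and $u_H=-\dsn{-1}\nabla_H^\perp\wt Q$ follow directly from \eqref{defeigenvs1} and \eqref{eqlpvslow}, and the commutator cancellations you invoke do indeed hold, so the identity \eqref{slowtransportgain} and hence \eqref{slowtransportgainL2} are correct.
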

\end{subsection}
\begin{subsection}{Proof of Theorem \ref{restrictedffT} and fast output estimates}
In this section we prove the convolution sum estimate of Theorem \ref{restrictedffT}, which follows a similar procedure to the proof of Theorem \ref{restrictedT}. In particular, we use Lemma \ref{restrictedl} once more, since Corollary \ref{mixedvolumec} holds true for the FSF interactions, but without any extra help from the interaction coefficients involved.
\begin{proof}[Proof of Theorem \ref{restrictedffT}]
 Using  Remark \ref{urealR} and Parseval's identity, we derive
\begin{align*}
\ip{\pD^{\ell} \wt B_f (\umm_f,\vmm_s),\pD^{\ell} \umm}  &= \ri|\T| \skm \sum_{\sigma_1 \sigma_3 <0} |\cn|^{2\ell}  S_{kmn}^{\sigma_1 0  \sigma_3 } u_k^{\sigma_1}v_m^0  u_{n}^{\sigma_3}    \ind_{\cN^{\textnormal {FSF}}}{ (k,m,n)},
\end{align*}
with $S_{kmn}^{\sigma_1 0  \sigma_3 } = \cip{r_k^{\sigma_1}}{\cm'} \cip{r_m^0}{{r_{n}^{\sigma_3}}}.$  We note that $|S_{kmn}^{\sigma_1 0  \sigma_3 }| \leq |\cm|$, for all possible choices of sign. Then,  using the fact that $|\cn|^{\ell_1} \lesssim_{\ell_1} |\ck|^{\ell_1} + |\cm|^{\ell_1}$ due to the convolution condition, we have
\begin{align*}
&\left|\ip{\pD^{\ell} \wt B_f (\umm_f,\vmm_s),\pD^{\ell} \umm}\right|\\
&  \lesssim_{\ell} |\T| \skm \sum_{\sigma_1 \sigma_3 <0} (|\ck|^{\ell_1}+ |\cm|^{\ell_1}) |\cm| |\cn|^{2\ell - {\ell_1}}   |v_m^0| |u_k^{\sigma_1}| |u_n^{\sigma_3}|  \ind_{\cN^{\textnormal {FSF}}}{ (k,m,n)}.
\end{align*}
Finally, we apply Lemma \ref{restrictedl} twice,  with $\mu = 3 - \frac {\xi} 2$,  in order to conclude, also recalling Remark \ref{restrictedr}. In particular, the indicator function $\ind_{\cN^{\textnormal {FSF}}}(\cdot,\cdot,\cdot)$ is symmetric with respect to permutations in its first and third arguments. 
\end{proof}
We now give a simple estimate for the SFF terms that we  will encounter in the proof of Theorem \ref{globalexistenceT}. The result is posed in terms of an unrestricted bilinearity, as the operator  $\wt B(\cdot, \cdot)$ includes  all SFF interactions.
\begin{lemma}\label{sff}
 Let $\beta_1\in(0,\frac 3 2)$.  Moreover, let $\umm \in H^{\frac 5 2 - \beta_1}(\T;\R^4)$ and $\vmm\in H^{\beta_1+1}(\T;\R^4)$ be divergence-free and zero-mean fields. Then the following estimate holds true:
\[
\left|\ip{B_f(\umm_s,\vmm_f), \Delta\vmm_f}\right|\lesssim_{\T,{ \beta_1}}  \|\umm_s\|_{H^{\frac 5 2 - \beta_1 }}\|\vmm_f\|_{H^1} \|\vmm_f\|_{H^{\beta_1+1}}.
\]
\end{lemma}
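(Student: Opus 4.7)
The plan is to reduce the pairing to a classical trilinear integral of the form $\int(\nabla\umm_s)(\nabla\vmm_f)(\nabla\vmm_f)$ and then close with H\"older plus Sobolev embeddings. First, because the spectral projections $\leray_k^\sigma$ are $L^2$-orthogonal and commute with $\Delta$, the fast part operator is self-adjoint and $\Delta\vmm_f$ is itself purely fast, so $\ip{B_f(\umm_s,\vmm_f),\Delta\vmm_f}=\ip{B(\umm_s,\vmm_f),\Delta\vmm_f}$. Since $(\vmm_f)^\backprime$ is divergence-free, $\Delta\vmm_f$ has a divergence-free velocity component, and by self-adjointness of $\leray$ the Leray projection inside $B$ can be dropped, leaving
\[
\ip{B_f(\umm_s,\vmm_f),\Delta\vmm_f}=\ip{\umm_s^\backprime\cdot\nabla\vmm_f,\Delta\vmm_f}.
\]

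Next I integrate by parts in one of the Laplacian derivatives. Writing $\Delta=\partial_k\partial_k$ and summing over $i,j,k$, I get
\[
\ip{\umm_s^\backprime\cdot\nabla\vmm_f,\Delta\vmm_f}=-\int (\partial_k (u_s)_i)(\partial_i v_j)(\partial_k v_j)\,\rd x-\int (u_s)_i(\partial_k\partial_i v_j)(\partial_k v_j)\,\rd x.
\]
The last integrand equals $\tfrac12 (u_s)_i\partial_i\bigl(|\nabla v_j|^2\bigr)$, so one further integration by parts turns it into $-\tfrac12\int(\dive\umm_s^\backprime)\,|\nabla\vmm_f|^2=0$, the velocity part of $\umm_s$ being divergence-free. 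Only the first trilinear piece survives.

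Finally I bound the surviving term by H\"older with exponents $(p,2,r)$, where $p=3/\beta_1$ and $r=6/(3-2\beta_1)$, so that $\tfrac1p+\tfrac12+\tfrac1r=1$. The range $\beta_1\in(0,\tfrac32)$ places both $3/2-\beta_1$ and $\beta_1$ strictly below the critical index $d/2=3/2$, so the Sobolev embeddings
\[
\|\nabla\umm_s\|_{L^p}\lesssim_{\T}\|\umm_s\|_{H^{5/2-\beta_1}},\qquad \|\nabla\vmm_f\|_{L^r}\lesssim_\T\|\vmm_f\|_{H^{\beta_1+1}}
\]
hold on $\T$ (with constants depending on $\sL_1,\sL_2$), and $\|\nabla\vmm_f\|_{L^2}\lesssim\|\vmm_f\|_{H^1}$ is trivial. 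Multiplying these three bounds yields the desired estimate. There is no serious obstacle here; the whole argument is routine, and the only point requiring attention is that the open interval $\beta_1\in(0,\tfrac32)$ keeps both embeddings strictly subcritical, so no endpoint version of Sobolev is needed.
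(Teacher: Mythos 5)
Your proof is correct, and it takes a genuinely different but morally equivalent route from the paper's. The paper works entirely on the Fourier side: after Parseval's identity and the real-field conjugation symmetry (Remark~\ref{urealR}), the pairing is antisymmetrized in $m,n$ to produce the factor $(|\cn|^2-|\cm|^2)=(|\cn|-|\cm|)(|\cn|+|\cm|)$, the triangle inequality gives $||\cn|-|\cm||\le|\ck|$, the incompressibility bound on the symbol $|\cip{r_k^0}{\cm'}|\le\min\{|\cm|,|\cn|\}$ gives the weight $|\ck||\cm||\cn|$, and then the trilinear convolution estimate \eqref{kps1} from Appendix~\ref{appendixpe} closes the argument. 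Your physical-space version does the identical thing: the reduction to $\ip{\umm_s^\backprime\cdot\nabla\vmm_f,\Delta\vmm_f}$ (using Lemma~\ref{orthogonalityL} to discard the slow-output piece and incompressibility of $(\vmm_f)^\backprime$ to drop $\leray$) followed by one integration by parts and $\dive\umm_s^\backprime=0$ is the physical-space avatar of the $(|\cn|^2-|\cm|^2)$ factoring, and your H\"older--Sobolev step with exponents $(3/\beta_1, 2, 6/(3-2\beta_1))$ essentially inlines the proof of \eqref{kps1} (which is itself established via Cauchy--Schwarz and Corollary~\ref{kpsc}). Your observation that $\beta_1\in(0,\tfrac32)$ keeps both embeddings strictly subcritical is the correct accounting; the exponents and Sobolev indices all check out ($H^{3/2-\beta_1}\hookrightarrow L^{3/\beta_1}$ and $H^{\beta_1}\hookrightarrow L^{6/(3-2\beta_1)}$). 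The only place worth being slightly more explicit is the claim $\dive\umm_s^\backprime=0$: this holds because the velocity part of the slow eigenvector $e_k^0$ from \eqref{defeigenv1} is orthogonal to $\ck$, but since you already noted $\umm_s$ is (weakly) divergence-free, this is a non-issue. In short, your argument is a cleaner, more elementary write-up that avoids explicitly invoking the restricted-convolution machinery, at the cost of not showcasing the Fourier-side viewpoint that the paper uses systematically for the restricted interaction sets.
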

\begin{proof}
We use Parseval's identity, Remark \ref{urealR}  and the incompressibility of $\umm$  to get:
\begin{align*}
&-2\ip{B_f(\umm_s,\vmm_f), \Delta\vmm_f}\\
& =\ri|\T|  \skm \sum_{\sigma_2\sigma_3 \neq 0}{ u_k^0 u_m^{\sigma_2} v_n^{\sigma_3}}\cip{r_k^0}{\cm'} \cip{r_m^{\sigma_2}}{r_n^{\sigma_3}} (|\cn|^{2} - |\cm|^{2}).
\end{align*}
We have $\left| |\cn|-|\cm|\right| \leq |\ck|$ via the triangle inequality and  $|\cip{r_k^0}{\cm'}| \leq \min \{|\cm|,|\cn|\}$, due to incompressibility. Also, we clearly have $|\cn|+|\cm|{ \lesssim }\max\{|\cn|, |\cm|\}.$ Thus:
\begin{align*}
\left|\ip{B_f(\umm_s,\vmm_f), \Delta\vmm_f}\right|&  \leq |\T| \skm \sum_{\sigma_2\sigma_3 \neq 0}{ \left| u_k^0\right| \left| v_m^{\sigma_2}\right| \left|v_n^{\sigma_3}\right|} |\ck||\cm||\cn|.
\end{align*}
The result follows from \eqref{kps1}.
\end{proof}
\end{subsection}
\begin{subsection}{Proof of Theorem \ref{globalexistenceT}(Global energy bounds)}
Equipped with  Theorems \ref{restrictedfT}, \ref{restrictedT}, \ref{restrictedffT}, Lemma \ref{sff}, and the $L^2$ identity \eqref{L2}, we are now in a position to prove Theorem \ref{globalexistenceT}.  We will extensively use the standard interpolation inequality
\begin{equation}\label{interpolation}
\|u\|_{H^{\ell}} \lesssim  \|u\|_{H^{\ell_1}}^{\theta} \|u\|_{H^{\ell_2}}^{1 - \theta},\quad\text{with}\quad\ell=\theta_1\ell_1+(1-\theta)\ell_2,\quad  \theta_1,\theta_2\in (0,1),
\end{equation}
 throughout the following proof.  In addition, we will write $C_\delta$ and $C_{\delta^*}$ for the implied constants in the definition of $\delta$ and $\delta^*,$ respectively.
\begin{proof}[Proof of Theorem  \ref{globalexistenceT}]
\underline{\textbf{Estimates on the slow part}}
We first test \eqref{approxeqlpv} with $\wt Q$ and estimate the slow component in $L^2$, recalling \eqref{eqlpvslow} and Lemma \ref{orthogonalityL}. In particular, the mixed slow interactions  only consist of FFS triplets due to our choice of $\delta^*$. 
 Then, we use  \eqref{slowtransportgainL2}, \eqref{interpolation},  Corollary \ref{restrictedlpvC} with $\xi=\frac 6 5$,  $a= \frac 3 5$,  and Lemma \ref{ellipticity} to obtain:
\begin{align}\label{lpvest}
 \partial_t \| \wt Q\|_{L^2 }^2 + 2 \nu_{min} \|\wt Q\|_{H^1}^2 \leq  \partial_t \| \wt Q\|_{L^2 }^2 - 2  \ip{\wt{\nu}_{11}\Delta \wt Q, \wt Q}  & \leq 2\left| \ip{\lpv \wt B_s(\Ummt_f, \Ummt_f), \wt Q } \right|   \\
&\le 2C \|\wt \Umm_f \|_{H^1}^2 \|\wt Q\|_{L^2},\nonumber
\end{align}
 for a constant $C$ that only depends on ${ C_{\delta^*}},\eta, \T$. We immediately infer that
\[
 \partial_t \|\wt Q\|_{L^2}\leq {C}\|\wt \Umm_f \|_{H^1}^2.
\]
Integrating the latter  over $[0,T]$  and using \eqref{L2}, we obtain:
\begin{equation}\label{lpvest2}
 \|\wt Q(T)\|_{L^2}\leq {\frac {C\nu_{min}^{-1}} 2}  \|\wt \Umm_0\|_{L^2}^2 +\|\wt Q_0\|_{L^2},
\end{equation}
 where $\wt Q_0 = \lpv \wt \Umm_0$, so that  the $L^{\infty}_t H^1_x$ estimate in \eqref{mains} follows from \eqref{normequivalence}.
We insert the last estimate in \eqref{lpvest}  and use \eqref{L2} in order to deduce 
\begin{align}\label{lpvest1}
 2\nu_{min} \int_0^T \|\wt Q\|_{H^1}^2 \rd t \leq C\nu_{min}^{-1} \|\wt \Umm_0\|_{L^2}^2( C\nu_{min}^{-1} \|\wt \Umm_0\|_{L^2}^2 +\|\wt Q_0\|_{L^2})+\|\wt Q_0\|_{L^2}^2
\end{align}
and  the $L^{2}_t H^2_x$ estimate in \eqref{mains} follows from  \eqref{normequivalence}.
\\
\underline{\textbf{Estimates on the fast part}}
As far as the $H^1$ fast estimate is concerned,  we test \eqref{approxeqlpv1} with { $-\Delta\Ummt_f$} and use Lemma \ref{orthogonalityL} and Lemma \ref{ellipticity} to obtain:
\begin{align}\label{energyf}
 \partial_t \| \Ummt_f\|_{H^1 }^2 + 2 \nu_{min} \| \Ummt_f \|_{H^{2}}^2 & \leq  \partial_t \|  \Ummt_f \|_{H^{1} }^2 + 2 \ip{\wt{\nu}_{22}\Delta \Ummt_f, \Delta \Ummt_f}\\
& \leq 2\left|\ip{\wt  B_f (\Ummt, \Ummt),\Delta \Ummt_f }\right|.\nonumber
\end{align}
We split the fast term on the right according to the nature of the input modes for the bilinearity. First, using equation \eqref{restrictedfff1} for $\ell=1$, from Theorem \ref{restrictedfT}, and Young's inequality, we have
\begin{align}\label{ffsF}
 &\left|  \ip{\wt B_f (\Ummt_f, \Ummt_f),\Delta \Ummt_f }\right|  \lesssim_{\eta,\T,  {C_\delta}} \|\Ummt_f\|_{H^1}^2  \|\Ummt_f\|_{H^2}\leq {\frac C{\nu_{min}}}  \| \Ummt_f \|_{H^1}^4 + \frac{\nu_{min}} 6   \|\Ummt_f \|_{H^2}^2,
\end{align}
 for an absolute constant C.
We follow a similar procedure for the FSF terms, with the help of Theorem \ref{restrictedffT}  instead. In particular,  \eqref{restrictedfsf} with  $\xi=\frac 6 5$, $\ell=1$, $\ell_1= \frac 3 5$,  $a=a'= \frac 3 5$,  \eqref{normequivalence}, \eqref{interpolation},  \eqref{lpvest2} and Young's inequality   yield
 \begin{align}\label{ffsF1}
 \left|  \ip{\wt B_{ f} (\Ummt_f, \Ummt_s),\Delta \Ummt_f }\right|   & \lesssim_{\eta, \T, { C_{\delta^*}}} \left( \|\Ummt_s \|_{H^{\frac 8 5}}      \| \Ummt_f \|_{H^{\frac 3 5}}  +  \|\Ummt_s \|_{H^{1 }} \| \Ummt_f \|_{H^{\frac 6 5}} \right)     \| \Ummt_f \|_{H^2} \\
 &   \lesssim  \left({\|\wt Q \|_{H^1}}      \| \Ummt_f \|_{H^1}+ \| \wt Q\|_{L^2}  \| \Ummt_f \|_{H^1}^{\frac 4 5}  \| \Ummt_f \|_{H^2}^{\frac 1 5}\right)\| \Ummt_f \|_{H^2}  \nonumber\\
&\leq {\frac C{\nu_{min}}}(\|\wt Q\|_{H^1}^2 + \|\wt \Umm_f\|_{H^1}^2)\|\wt \Umm_f\|_{H^1}^2 +  \frac{\nu_{min}} 6 \|\wt \Umm_f\|_{H^2}^2,\nonumber
 \end{align}
  for an absolute constant $C$.
Finally, for the SFF terms  we  use Lemma \ref{sff} with $\beta_1=1$, \eqref{normequivalence}, \eqref{interpolation} and Young's inequality:
\begin{align}\label{ffsF2}
\left|  \ip{B_f (\Ummt_s, \Ummt_f),\Delta \Ummt_f }\right|    &  \lesssim_{\eta, \T} \|\Ummt_s \|_{H^{\frac 3 2}} \| \Ummt_f \|_{H^1}\| \Ummt_f \|_{H^2}\\&   \lesssim  {\| \Ummt_s\|_{H^2}} \| \Ummt_f \|_{H^1} \|\Ummt_f \|_{H^2} \nonumber\\
&\leq \frac{C}{\nu_{min}}\|\wt Q\|_{H^1}^2\|\wt \Umm_f\|_{H^1}^2+  \frac{\nu_{min}} 6 \|\wt \Umm_f\|_{H^2}^2,\nonumber
\end{align}
for an absolute constant C.
Combining \eqref{energyf} with \eqref{ffsF}, \eqref{ffsF1} and \eqref{ffsF2} yields
\begin{equation}\label{ffsF3}
\partial_t \|\wt \Umm_f\|_{H^1}^2+  {\nu_{min}}  \|\Ummt_f \|_{H^2}^2  \leq 
C(\eta, \T, { C_\delta, C_{\delta^*}}){\nu_{min}^{-1}}
\left(\|\wt Q \|_{H^1}^2 + \| \Ummt_f \|_{H^1}^2  \right)\|\wt\Umm_f\|_{H^1}^2.
\end{equation}
Then, the $L^\infty_t H^1_x$ estimate in \eqref{mainf2} follows by integrating the last inequality in time,   \eqref{L2}, \eqref{normequivalence},  \eqref{lpvest2} and   Gr\"onwall's inequality.  Finally, the $L^2_t H^2_x$ estimate in \eqref{mainf2} follows by combining  this estimate, \eqref{L2}, \eqref{normequivalence},  \eqref{lpvest2}  and \eqref{ffsF3}.
\end{proof}
\end{subsection}
\begin{subsection}{Proof of Theorem \ref{differenceT}(Error estimates)}
In this section, we prove Theorem  \ref{differenceT} on the difference of our approximation and the modulated system in an initial time interval.  The estimate that we obtain depends on $\nu_1, \nu_2$ only via  upper bounds for the ratio $\nu_R=\frac{\nu_{max}}{\nu_{min}}$ and $\nu_{max}$. Similar results have appeared concerning the proximity of the exact resonant dynamics to that of the full Boussinesq approximation in \cite{BMN2},\cite{BMN1}, \cite{BMNZ}  and  \cite{GAL1}. An interesting phenomenon is the higher regularity loss occurring in the difference equation, due to the presence of mixed  interactions. 

The strategy of the proof consists of conveniently expanding the difference of the approximate modulated and  modulated systems, along with standard tools, like Gr\"onwall's inequality. In order to apply the latter,  our control on $\omega_{kmn}^{\vec \sigma}$  outside the mixed near resonant set  proves crucial.  Then, the estimation concerning the fast terms proceeds in a similar manner to \cite{BCZNS}. On the other hand, a different lower bound for the bandwidth is implemented for the mixed terms. This is reflected on the larger derivative gap, $\ell-\ell'$, compared to the one of \cite[Theorem 1.4]{BCZNS}. Nevertheless, we only need standard bilinear estimates throughout the proof.
\begin{proof}[Proof of Theorem \ref{differenceT}]
 Under the  notation of Theorem \ref{differenceT}, 
let $\umm =e^{- \tau \Le}\Umm $  be a solution to  \eqref{boussmod0},  and let $\ummt:=e^{- \tau \Le}\Ummt$ be   a solution to the corresponding approximate system
 \begin{equation}\label{restrictedM}
\partial_t \ummt  + \wt B(\tau, \ummt,\ummt) +  \wt A \ummt = 0
\end{equation}
 which is the modulated version of our proposed approximate system \eqref{restrictedS}.  Note that $\wt A$ is acting on $\ummt$ in the same way as in \eqref{laplacianmod} due to the commutability properties of the restricted elliptic operators  in Section \ref{restrictedLB}.
We set $\wmm= \umm - \wt\umm$ and  $\Wmm=\Umm - \Ummt$. \\
\underline{\textbf{The equation for the difference}}\\
 First, we derive the equation  for $\wmm$:
\[
 \partial_ t \wmm  +B(\tau,\umm,\wmm)+ B(\tau,\wmm,\ummt)+ B(\tau,\ummt,\ummt)-\wt B(\tau,\ummt,\ummt)+  \wt A \wmm  + (A-\wt A)\umm=0. 
\]
The difference between the original  and modified bilinearities, occurring in the difference equation,  can be expressed as follows
\begin{align*}
&B(\tau,\ummt,\ummt)-\wt B(\tau,\ummt,\ummt)
\\ &= (B_s-\wt B_s)(\tau, \ummt_f,\ummt_f)   + (B_f - \wt B_f)(\tau, \ummt_f,\ummt_s) + (B_f- \wt B_f)(\tau, \ummt_f,\ummt_f)\\
&+ B_f (\tau,{ \ummt_s,\ummt_s}) +B_s(\tau, \ummt_f,\ummt_s)+B_s(\tau, \ummt_s,\ummt_f) . 
\end{align*}\\
\underline{\textbf{Elimination of oscillatory factors}}\\
We now focus on  terms in the difference equation containing time oscillations.  For the sake of brevity, we write $\ind_{\sigma_1\sigma_2 <0},\ind_{\sigma_1\sigma_2 >0}$ instead of introducing  separate summation signs under these restrictions. In particular,  we use the product rule in order to derive
\begin{align*}
& N  (B_s-\wt B_s)(\tau,\ummt_f, \ummt_f)= \partial_t \rmm_1 +  \rmmt_1\\ &:=   \partial_t \sum_{k,m ,n;conv} { \sum_{\sigma_1,\sigma_2}\left(\ind_{\sigma_1\sigma_2<0} \ind_{(\cN^{\textnormal {FFS}})^{\mathsf{c}}}+ \ind_{\sigma_1\sigma_2>0}\right)}(\ri  \omega_{kmn}^{\vec\sigma})^{-1}  e^{\ri\omega_{kmn}^{\vec \sigma}\tau}  B_{kmn}^{\sigma_1 \sigma_2 0}(\ummt_f,\ummt_f)  \\
& - \sum_{k,m ,n;conv}{ \sum_{\sigma_1,\sigma_2} \left(\ind_{\sigma_1\sigma_2<0} \ind_{(\cN^{\textnormal {FFS}})^{\mathsf{c}}}+ \ind_{\sigma_1\sigma_2>0}\right)}(\ri  \omega_{kmn}^{\vec\sigma})^{-1}\left( e^{\ri\omega_{kmn}^{\vec \sigma}\tau}   \partial_t B_{kmn}^{\sigma_1 \sigma_2 0}(\ummt_f,\ummt_f) \right).\\
\end{align*} 
A similar calculation for the FSF terms  yields 
\begin{align*}
& N  (B_f - \wt B_f)(\tau,\ummt_f, \ummt_s)= \partial_t \rmm_2 +  \rmmt_2\\
& :=\partial_t \sum_{k,m ,n;conv}{ \sum_{\sigma_1,\sigma_3} \left(\ind_{\sigma_1\sigma_3<0} \ind_{(\cN^{\textnormal {FSF}})^{\mathsf{c}}}+ \ind_{\sigma_1\sigma_3>0}\right)}(\ri  \omega_{kmn}^{\vec\sigma})^{-1} e^{\ri\omega_{kmn}^{\vec \sigma}\tau}  B_{kmn}^{\sigma_1 0 \sigma_3 }(\ummt_f,\ummt_s)  \\
& - \sum_{k,m ,n;conv}{\sum_{\sigma_1,\sigma_3}\left(\ind_{\sigma_1\sigma_3<0} \ind_{(\cN^{\textnormal {FSF}})^{\mathsf{c}}}+ \ind_{\sigma_1\sigma_3>0}\right)}  (\ri  \omega_{kmn}^{\vec\sigma})^{-1}\left( e^{\ri\omega_{kmn}^{\vec \sigma}\tau}   \partial_t B_{kmn}^{\sigma_1 0 \sigma_3}(\ummt_f,\ummt_s) \right).\\
\end{align*} 
Finally, the  FFF terms can be expressed as follows
\begin{align*}
& N  (B_f-\wt B_f)(\tau,\ummt_f, \ummt_f) = \partial_t \rmm_3 +  \rmmt_3\\
&: = \partial_t \sum_{k,m ,n;conv} \sum_{\sigma_1 \sigma_2 \sigma_3 \neq 0} \ind_{(\cN^{\textnormal {FFF}})^{\mathsf{c}}} (\ri  \omega_{kmn}^{\vec\sigma})^{-1} e^{\ri\omega_{kmn}^{\vec \sigma}\tau}  B_{kmn}^{\sigma_1 \sigma_2 \sigma_3}(\ummt_f,\ummt_f) \\
& - \sum_{k,m ,n;conv} \sum_{\sigma_1 \sigma_2 \sigma_3 \neq 0} \ind_{(\cN^{\textnormal {FFF}})^{\mathsf{c}}}  (\ri  \omega_{kmn}^{\vec\sigma})^{-1}\left( e^{\ri\omega_{kmn}^{\vec \sigma}\tau}   \partial_t B_{kmn}^{\sigma_1 \sigma_2 \sigma_3}(\ummt_f,\ummt_f) \right).
\end{align*} 
As far as the  SSF are concerned, we have
\begin{align*}
N B_f (\tau,\umm_s,\umm_s) =  \partial_t \rmm_{4} +  \rmmt_{4}& := \partial_t  \sum_{k,m ,n;conv} \sum_{\sigma_3 \neq 0}  (\ri  \omega_{kmn}^{\vec\sigma})^{-1} e^{\ri\omega_{kmn}^{\vec \sigma}\tau}    B_{kmn}^{00 \sigma_3}(\ummt_s,\ummt_s)\\
&-\sum_{k,m ,n;conv} \sum_{\sigma_3 \neq 0} (\ri  \omega_{kmn}^{\vec\sigma})^{-1} e^{\ri\omega_{kmn}^{\vec \sigma}\tau}   \partial_t B_{kmn}^{0 0 \sigma_3}(\ummt_s,\ummt_s).
\end{align*}
We also define in a similar manner
\[
B_s( \ummt_f,\ummt_s):=\partial_t \rmm_{5} +  \rmmt_{5}\quad\text{and}\quad B_s( \ummt_s,\ummt_f):=\partial_t \rmm_{6} +  \rmmt_{6}.
\]
Finally, we turn our attention to the oscillating viscosity and heat conductivity terms which appear in the difference equation, via  setting 
\begin{align*}
&N(A-\wt A)\umm= \partial_t \rmm_{7} +  \rmmt_{7}\\&:=-\partial_t  \Delta  \sum_{k\in\Z^3\setminus\{\vec 0\}} \sum_{\sigma\neq\sigma_1}\left(\ri \sigma_1 \omega_k -\ri \sigma \omega_k \right)^{-1}e^{\ri( \ck\cdot x +\sigma_1 \omega_k\tau -\sigma \omega_k \tau) }  \cip{\boldsymbol{\nu} r_k^{\sigma_1}}{\overline {r_k^{\sigma}}} u_k^{\sigma_1} r_k^{\sigma}\\
&{ +}\Delta \sum_{k\in\Z^3\setminus\{\vec 0\}} \sum_{\sigma\neq\sigma_1}  \left(\ri \sigma_1 \omega_k -\ri\sigma\omega_k  \right)^{-1} e^{\ri( \ck\cdot x +\sigma_1 \omega_k\tau - \sigma \omega_k \tau) } \cip{\boldsymbol{\nu} r_k^{\sigma_1}}{\overline {r_k^{\sigma}}} \partial_t u_k^{\sigma_1} r_k^{\sigma}.\nonumber
\end{align*}

In order to proceed, we set
\begin{equation}\label{wdef}
 \wmmt : = \wmm -N^{-1}\sum_{i=1}^7 {\rmm_i}.
\end{equation}
Then,  $ \wmmt$ satisfies the following equation
\begin{align}\label{difference1}
 \partial_ t \wmmt + A \wmmt &+  B(\tau,\umm,\wmmt) + B(\tau,\wmmt,\ummt) \\
&+  N^{-1} \sum_{i=1}^7 \left[ A \rmm_i +  B(\tau,\umm,\rmm_i) + B(\tau,\rmm_i,\ummt) + \rmmt_i\right]  =0.\nonumber
\end{align}

We remark that the presence of a negative power of the dispersion relation $\omega_{kmn}^{\vec \sigma}$ in the denominators of some of the previous expressions is not problematic, as the lower bound on the bandwidth provides us with sufficient control. In more detail, we have an estimate of the form 
\begin{equation}\label{far}
|\omega_{kmn}^{\vec \sigma}|^{-1}<  c_f^{-1}(|\ck|+|\cm|),\quad\text{when}\quad(k,m,n)\in (\cN^{\textnormal {FFF}})^{\mathsf{c}}.
\end{equation}
On the other hand, we have  a corresponding estimate
\begin{equation}\label{far1}
|\omega_{kmn}^{\vec \sigma}|^{-1}{ \leq c_s^{-1} C(\xi)}(|\ck|+|\cm|)^{\xi},\quad\text{when}\quad(k,m,n)\in (\cN^{\textnormal {FFS}})^{\mathsf{c}},
\end{equation}
with $\xi \in [\frac 6 5, 2]$   and $C(\xi)>0$. In addition, an identical estimate holds true in $(\cN^{\textnormal {FSF}})^{\mathsf{c}}$. \\
\underline{\textbf{Estimates on the time derivative}}\\
Next, we examine the regularity cost of estimating the time derivative of the solution to either the modulated or the approximate modulated systems.  In that direction, we use \eqref{kp}  for the bilinear term, so that
 \begin{align}\label{timederb}
 \| \partial_t \tu\|_{H^{\ell}} &\leq { \nu_{max}}\|\tu \|_{H^{\ell+2}} +\|\wt B(\tau,\tu,\tu) \|_{H^{\ell}}\\
& \leq \nu_{max} \|\tu \|_{H^{\ell+2}} + C(\ell')\| \tu\|_{H^{\frac 3 2 + \gamma}} \|\tu \|_{H^{\ell + 1}},\nonumber
\end{align}
for all $\gamma, \ell>0,$
with an identical estimate holding true for $\umm$ and $B(\tau,\cdot,\cdot)$.\\
\underline{\textbf{Estimates on the  remainder terms}}\\
We now examine the effect of \eqref{far} and \eqref{far1} on the terms $\rmm_i$, $\rmmt_i$, for { $i=1\ldots 6$}.  Since  $\xi>1$, we estimate $\rmm_1$ and $\rmmt_1$ only, with $\rmm_2,\rmmt_2$ obeying  similar estimates.  The terms $\rmm_3$ and $\rmmt_3$ can be treated analogously, up to a substitution of $\xi$ with 1.

  First,  we claim that
\[
\| \rmm_1 \|_{H^{\ell'}} \lesssim_{\ell',\xi,c_s^{-1}} \| \ummt \|_{\xi+\ell'+1} \| \ummt\|_{\frac 3 2+\gamma },\quad\text{for all}\quad\gamma>0.
\]
Indeed, we use \eqref{far1} and the convolution condition, in order to deduce that:
\[
\| \rmm_1 \|_{H^{\ell'}}^2 \lesssim_{\ell',\xi,c_s^{-1}} \skm |\wt u_k|^2 |\wt u_m|^2 |\cm|^{2}(|\ck| + |\cm|)^{2\xi} |\cn|^{2 \ell'},
\]
 with the claim following via \eqref{kp}.

Similarly,  using \eqref{timederb} and \eqref{kp} once more, together with \eqref{interpolation}, we deduce
\begin{align*}
  \| \rmmt_1\|_{H^{\ell'-1}} &\lesssim_{\ell',\xi,c_s^{-1}} \|\partial_t \ummt\|_{H^{\ell'+\xi+{\gamma}}}\| \ummt\|_{\frac 3 2  } + \|\partial_t  \ummt\|_{H^{\frac 3 2 }}\| \ummt\|_{H^{\ell'+ \xi+{\gamma}}}\\
&\leq C(\xi, \ell',c_s^{-1})  \left( \nu_{max} \| \ummt\|_{H^{\ell'+2+\xi +{\gamma}}}+\| \ummt\|_{H^{\ell'+1+\xi+\gamma}}\| \ummt\|_{H^{\frac 3 2 + \gamma}  }\right)\|\ummt \|_{H^{\frac 3 2}}\\
&+C(\xi, \ell',c_s^{-1})  \left(\nu_{max}\| \ummt\|_{H^{\frac 7 2}}  + \|\ummt\|_{H^{\frac 5 2}}\| \ummt\|_{H^{\frac 3 2 + \gamma}} \right) \|\ummt \|_{H^{\ell' + \xi +\gamma}}\\
&\leq C(\xi, \ell',c_s^{-1})  \left(\nu_{max}\| \ummt\|_{H^{\ell}}  + \|\ummt\|_{H^2}\| \ummt\|_{H^{\ell-1}} \right) \|\ummt \|_{H^{\frac 3 2}},
\end{align*}
for all $\gamma>0.$

 We now turn our attention to the remainder terms that do not come from a restriction on the level of interactions. Since   ${\left|\omega_{k}^\sigma\right|^{-1}} \leq \max\{1,\eta^{-1}\}$,  for $\sigma \in \{\pm\},$ we have bounds
\[
\|\rmm_4\|_{H^{\ell'}} \leq C(\eta,\ell',c_s^{-1}) \|\ummt \|_{H^{\frac 3 2 +\gamma}}\|\ummt \|_{H^{\ell'+1}},\quad\text{for all}\quad\gamma>0,
\]
using \eqref{kp} and the definition of $\rmm_4$ directly.
Moreover, $\rmm_5,\rmm_6$ can be bounded in a similar manner. The remaining terms,  $\rmmt_4, \rmmt_5, \rmmt_6$ can be estimated  without the bandwidth cost of order $\xi$.  In particular, we have:
\begin{align*}
  \| \rmmt_4\|_{H^{\ell'-1}} &\lesssim_{\ell',c_s^{-1}} \|\partial_t \ummt\|_{H^{\ell'+{\gamma}}}\| \ummt\|_{\frac 3 2  } + \|\partial_t  \ummt\|_{H^{\frac 3 2 }}\| \ummt\|_{H^{\ell'+{\gamma}}}\\
&\leq C(\ell',c_s^{-1})  \left(\nu_{max}\| \ummt\|_{H^{\ell-\xi}}  + \|\ummt\|_{H^2}\| \ummt\|_{H^{\ell-1-\xi}} \right) \|\ummt \|_{H^{\frac 3 2}},
\end{align*}
via  \eqref{interpolation}, \eqref{timederb}, \eqref{kp} and the definition of $\rmmt_4$ directly, with $\rmmt_5$ and $\rmmt_6$ satisfying similar estimates.\\
 \underline{\textbf{Estimates on the dissipative remainder  terms}}
As far as $\rmm_7$ and $\rmmt_7$ are concerned, we have no derivative losses, since $|\sigma_1 \omega_k -\sigma\omega_k|^{-1}\leq \frac 1 2 \max\{1,\eta^{-1}\}$ when $\sigma \neq \sigma_1.$ Thus, the following estimates hold true
\begin{equation}\label{r7}
\|\rmm_7\|_{H^\ell}\leq C \nu_{max}\|\umm\|_{\ell+2}\quad\text{and}\quad\|\rmmt_7\|_{H^\ell}\leq C \nu_{max}\|\partial_t\umm\|_{\ell+2},
\end{equation}
for all $\ell\in\R$ and a  constant $C>0$ depending on $\eta,\ell$.
\\
\underline{\textbf{Final arguments}}\\
We claim that the fields $\umm$ and $\ummt$ satisfy local in time estimates in  $L^\infty\left([0,T_0);H^{\ell}(\T;\R^4) \right) \cap L^2\left([0,T_0);H^{\ell+1}(\T;\R^4) \right)$, for a $T_0=T_0(E_0)$, as solutions to \eqref{boussmod0} and \eqref{restrictedM}, respectively. 
 In particular, there exists a constant $C>0$ and  a time $T_0>0$, depending on $E_0$, so that 
\begin{equation}\label{final0}
\sup_{t\in[0,T_0]}\left(\|\umm\|_{H^\ell}^2+\|\ummt\|_{H^\ell}^2\right) + 2\nu_{min}\int_0^{T_0}\left(\|\umm\|_{H^{\ell+1}}^2 + \|\ummt\|_{H^{\ell+1}}^2\right) \rd t \lesssim_{\ell} E_0,
\end{equation}
where $C=C(\ell,\ell_1)$.

Indeed, the original Boussinesq system has a Navier-Stokes type bilinearity and for the range of $\ell$ under consideration standard results apply, see e.g.  \cite{TAYLOR}. On the other hand, the approximation \eqref{restrictedM} can be handled similarly, due to the $L^2$ cancellation property of Corollary \ref{L2l0}.  In more detail, a $(\nu_1,\nu_2)$-independent local $H^\ell$ estimate for the approximate system requires the cancellation property $\ip{\wt B(\Umm,\pD^{\ell}\Vmm),\pD^{\ell}\Vmm}=0$ to handle the $(\ell+1)^{th}$ order derivatives, which is guaranteed by Corollary \ref{L2l0}. Next, we set 
\begin{itemize}
\item $\ell_\gamma: = \max \{\ell', \frac 5 2 + \gamma \}$
\item $\wt \ell_\gamma : = \max \{\ell' + 1, \frac 5 2 + \gamma \}$,
\end{itemize}
for $\gamma>0.$ The  bilinear interactions of \eqref{difference1} can be estimated similarly to the one in \eqref{timederb}. Testing the difference equation \eqref{difference1} with $\ds{\ell'} \wmm,$ we obtain
\begin{align*}
\partial_t \| \wmmt \|_{H^{\ell'}}^2 + 2 \nu_{min} \| \wmmt\|^2_{H^{\ell'+1}}&\leq C \left( \|\ummt\|_{H^{\wt \ell_\gamma}} +  \|\umm\|_{H^{ \ell_\gamma}}  \right) \| \wmmt\|_{H^{\ell'}}^2\\
& +C N^{-1}\sum_{i=1\ldots6}  \left( \|\umm\|_{H^{\wt \ell_\gamma -1}}  \|\rmm_i\|_{H^{\ell'+1}} + \|\ummt\|_{H^{\wt \ell_\gamma }}  \|\rmm_i\|_{H^{\ell'}}        \right)  \|\wmmt\|_{H^{\ell'}}\nonumber\\
& + CN^{-1} \left(   \sum_{i=1\ldots7} \left(   \|\rmmt_i\|_{H^{\ell' - 1}}+\|A \rmm_i\|_{H^{\ell' - 1}}\right)    \right)  \|\wmmt\|_{H^{\ell' +1 }}\nonumber\\
& + CN^{-1}     (\|\umm\|_{H^{\ell_\gamma -1}}\|\rmm_7\|_{H^{\ell'}}   + \|\ummt\|_{H^{\ell_\gamma  }}\|\rmm_7\|_{H^{\ell'-1}}  )      \|\wmmt\|_{H^{\ell' +1 }}\nonumber\\
\end{align*}
 for some $C=C(\eta, \ell',\xi,c_f^{-1},c_s^{-1})$, via  \eqref{kp} and H\"older's inequality. We examine the terms on the right separately. 
As far as the last term on the right is concerned, we use  Young's  inequality and \eqref{r7} to get:
\begin{align}\label{final1}
   &N^{-1}(\|\umm\|_{H^{\ell_\gamma -1}}\|\rmm_7\|_{H^{\ell'}}   + \|\ummt\|_{H^{\ell_\gamma  }}\|\rmm_7\|_{H^{\ell'-1}}  )      \|\wmmt\|_{H^{\ell' +1 }}\leq \frac {\nu_{min}} 3 \|\wmmt\|_{H^{\ell' +1 }}^2\\
&+ CN^{-2} \nu_R(\|\umm\|_{H^{\ell_\gamma -1}}^2\|\umm\|_{H^{\ell' +2}}^2   + \|\ummt\|_{H^{\ell_\gamma  }}^2\|\umm\|_{H^{\ell'+1}}^2  ).\nonumber
\end{align}
The remaining terms containing factors that depend on $\rmm_7$ and $\rmmt_7$ can be estimated using \eqref{timederb}, \eqref{r7} and Young's inequality:
\begin{align}\label{final2}
N^{-1}(  \|\rmmt_7\|_{H^{\ell' - 1}}&+\|A \rmm_7\|_{H^{\ell' - 1}})\|\wmmt\|_{H^{\ell' +1 }}\leq \nu_{max}\left( \|\partial_t \umm\|_{H^{\ell' +1}} + \|\umm\|_{H^{\ell' +3}}\right) \|\wmmt\|_{H^{\ell' +1 }}\\
&\lesssim_{\nu_{max},\ell'}N^{-2}\nu_R \left( \|\umm\|_{\ell'+3}^2+\|\umm\|_{\frac 3 2 + \gamma}^2\|\umm\|_{\ell'+2}^2\right) + \frac {\nu_{min}} 3\|\wmmt\|_{H^{\ell' +1 }}^2.\nonumber
\end{align}
Utilizing \eqref{timederb}, \eqref{r7} and Young's inequality once more, we obtain:
\begin{align*}
 &N^{-1} \left(    \left(   \|\rmmt_1\|_{H^{\ell' - 1}}+\|A \rmm_1\|_{H^{\ell' - 1}}\right)    \right)  \|\wmmt\|_{H^{\ell' +1 }}\leq  N^{-1} \left(  \left(   \|\rmmt_1\|_{H^{\ell' - 1}}+\nu_{max}\| \rmm_1\|_{H^{\ell' + 1}}\right)    \right)  \|\wmmt\|_{H^{\ell' +1 }}\\
 &\leq N^{-1} \left(\left(\nu_{max}\| \ummt\|_{H^{\ell}}  + \|\ummt\|_{H^2}\| \ummt\|_{H^{\ell-1}} \right) +\nu_{max}\| \ummt \|_{\xi+\ell'+2+\gamma} \right) \| \ummt\|_{\frac 3 2} \|\wmmt\|_{H^{\ell' +1 }}\nonumber\\
&\lesssim_{\nu_{max},\ell'} N^{-2}\nu_R \left(\| \ummt\|_{H^{\ell}}^2  + \|\ummt\|_{H^2}^2\| \ummt\|_{H^{\ell-1}} ^2 +\| \ummt \|_{\xi+\ell'+2+\gamma}^2 \right) \| \ummt\|_{\frac 3 2}^2 + \frac {\nu_{min}} {20}\|\wmmt\|_{H^{\ell' +1 }}^2. \nonumber
\end{align*}
It immediately follows that
\begin{align}\label{final3}
 N^{-1} &\left(   \sum_{i=1\ldots 6} \left(   \|\rmmt_i\|_{H^{\ell' - 1}}+\|A \rmm_i\|_{H^{\ell' - 1}}\right)    \right)  \|\wmmt\|_{H^{\ell' +1 }}\\
&\lesssim_{\nu_{max},\ell'} N^{-2}\nu_R  \left(\| \ummt\|_{H^{\ell}}^2  + \|\ummt\|_{H^2}^2\| \ummt\|_{H^{\ell-1}} ^2 +\| \ummt \|_{\xi+\ell'+2+\gamma}^2 \right) \| \ummt\|_{\frac 3 2}^2 + \frac {\nu_{min}} 3\|\wmmt\|_{H^{\ell' +1 }}^2.\nonumber
\end{align}

Inserting \eqref{final1}, \eqref{final2}, \eqref{final3} back into our energy estimates, estimating any remaining terms that contain $\rmm_i$ and using \eqref{final0} yields
\begin{align*}
&\partial_t \| \wmmt \|_{H^{\ell'}}^2 +  \nu_{min} \| \wmmt\|^2_{H^{\ell'+1}}\lesssim_{E_0}  \| \wmmt\|_{H^{\ell'}}^2 +N^{-2}.
\end{align*}
Then, in view of \eqref{wdef} and our bounds on the remainder terms $\rmm_i$, an integration in time and an appeal to  Gr\"onwall's inequality  complete the proof.
\end{proof}
\end{subsection}
\end{section}
\appendix
\section{Some product estimates}\label{appendixpe}
We  prove a weaker instance of the classical homogeneous Kato-Ponce fractional Leibniz rule,  see e.g. \cite{GK}, which is still sufficient for our purpose. In order to proceed, and for a given $f:\mathbb T^d\to \R$, we define:
\[
f_{abs}: = \sum_{n\in \Z^n} e^{\ri \cn\cdot x} |f_n|.
\]
\begin{lemma}\label{kpl}
Let $\ell\geq 0$ and $f,g:\mathbb T^d \to \R$ be sufficiently smooth functions with zero-mean. Then for all  $2\leq q,r,q',r' \leq \infty $ with $\frac 1 2 = \frac 1 q + \frac 1 r = \frac 1 {q'} + \frac  1 {r'}$  and all $a,b\geq 0$, the following estimate holds true
\begin{equation}\label{kp}
\| fg   \|_{H^{\ell}} \lesssim\| \pD^{\ell+a} f_{abs}  \|_{L^{q}}\| \pD^{-a}  g_{abs}  \|_{L^{r}}   + \| \pD^{-b} f_{abs}  \|_{L^{q'}} \| \pD^{\ell+b} g_{abs}  \|_{L^{r'}},
\end{equation}
with the implied constant only depending on $\ell.$
\end{lemma}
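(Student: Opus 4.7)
The plan is to first reduce to the case of non-negative Fourier coefficients, then split the weight $|\cn|^\ell$ according to whether $|\ck|$ or $|\cm|$ dominates, and finally close by Parseval's identity combined with H\"older's inequality.

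First I would observe the pointwise Fourier-side bound $|(fg)_n| = \bigl|\sum_{k+m=n} f_k g_m\bigr| \le \sum_{k+m=n} |f_k||g_m| = (f_{abs}g_{abs})_n$, so that $\|fg\|_{H^\ell}\le \|f_{abs}g_{abs}\|_{H^\ell}$. Hence it suffices to prove the stated estimate for $f_{abs}g_{abs}$, whose Fourier coefficients are non-negative. This trick essentially allows one to work as if all cancellations in the convolution have been discarded, at no cost to the inequality.

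Next comes the key algebraic step. For $k+m=n$ one has $|\cn|\le|\ck|+|\cm|$, which gives $|\cn|^\ell\lesssim_\ell |\ck|^\ell+|\cm|^\ell$ since $\ell\ge0$. On the region $|\cm|\le|\ck|$, the monotonicity $|\ck|^{-a}\le|\cm|^{-a}$ (using $a\ge0$) yields $|\ck|^\ell=|\ck|^{\ell+a}|\ck|^{-a}\le|\ck|^{\ell+a}|\cm|^{-a}$; on the complementary region $|\ck|\le|\cm|$ one similarly gets $|\cm|^\ell\le|\ck|^{-b}|\cm|^{\ell+b}$. Because the summands of $(f_{abs}g_{abs})_n$ are non-negative, I may extend each partial sum back to all $k+m=n$ and obtain
\begin{equation*}
|\cn|^\ell (f_{abs}g_{abs})_n\lesssim_\ell (P_a Q_a)_n + (P_b Q_b)_n,
\end{equation*}
where $P_a:=\pD^{\ell+a}f_{abs}$, $Q_a:=\pD^{-a}g_{abs}$, $P_b:=\pD^{-b}f_{abs}$, $Q_b:=\pD^{\ell+b}g_{abs}$, and each product is viewed as multiplication of trigonometric series on $\mathbb{T}^d$.

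To close, I would square, sum in $n$, apply $(x+y)^2\le2(x^2+y^2)$, and invoke Parseval's identity to recognise each resulting sum as an $L^2$ norm, giving $\|fg\|_{H^\ell}^2\lesssim_\ell\|P_aQ_a\|_{L^2}^2+\|P_bQ_b\|_{L^2}^2$. A direct application of H\"older's inequality with the paired exponents $1/q+1/r=1/q'+1/r'=1/2$ then delivers \eqref{kp}. No single step is especially delicate; the main point is recognising the decomposition $(P_a,Q_a)$ and $(P_b,Q_b)$ that the right-hand side of \eqref{kp} already suggests, which trades a derivative gain on one factor for a controlled derivative loss on the other via the shift parameters $a,b\ge0$.
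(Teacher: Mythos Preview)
Your proof is correct and follows essentially the same approach as the paper's: both reduce to non-negative Fourier coefficients, split the convolution according to whether $|\ck|$ or $|\cm|$ dominates, shift the weights via $|\ck|^{\ell+a}|\cm|^{-a}$ (respectively $|\ck|^{-b}|\cm|^{\ell+b}$), and close with Parseval plus H\"older. The only cosmetic difference is that you isolate the reduction $\|fg\|_{H^\ell}\le\|f_{abs}g_{abs}\|_{H^\ell}$ as a preliminary step, whereas the paper absorbs it into the first display.
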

\begin{proof}We  use Plancherel's Theorem and a decomposition into high-low modes, in order to deduce that
\begin{align*}
\| fg   \|_{H^{\ell}}^2 &= \sum_{n\in\Z^3\setminus\{\vec 0\}} \left|\sum_{k\in\Z^3\setminus\{\vec 0\}} |\cn|^{\ell}  f_k g_m \right|^2\\
& \lesssim  \sum_{n\in\Z^3\setminus\{\vec 0\}} \left(\sum_{\substack{|\cm|\leq|\ck|}} |\ck|^{\ell+a} |\cm|^{-a} |f_k| |g_m| \right)^2 +  \sum_{n\in\Z^3\setminus\{\vec 0\}} \left(\sum_{\substack{|\ck|\leq|\cm|}} |\cm|^{\ell+b} |\ck|^{-b} |f_k| |g_m| \right)^2\\
&\leq C(\ell)\left[ \|\widehat{\pD^{\ell+a} f_{abs}} * \widehat {\pD^{-a}g_{abs}} \|_{\ell^2}^2 + \|\widehat{\pD^{-b} f_{abs}} * \widehat {\pD^{\ell+b}g_{abs}} \|_{\ell^2}^2 \right]\\
&= C(\ell)\left[\| \left( \pD^{\ell+a} f_{abs} \right)  \left( \pD^{-a}g_{abs} \right) \|_{L^2}^2 + \|  \left( \pD^{-b} f_{abs} \right)    \left(\pD^{\ell+b}g_{abs} \right) \|_{L^2}^2\right].
\end{align*}
Then the result follows by H\"older's inequality.
\end{proof}
\begin{corollary}\label{kpsc}
Let $f, g: \mathbb T^d \to \R$  with zero-mean, and $\beta_1, \beta_2  \in { [0,\frac d 2)}$ with $0<\beta_1+\beta_2$. Then the following estimate holds true:
\begin{equation}\label{kps}
\|f g \|_{H^{\beta_1+\beta_2-\frac d 2}} \lesssim \|f \|_{H^{\beta_1}} \| g \|_{H^{\beta_2}},
\end{equation}
with the implied constant  depending on $b_1, b_2, d.$
\end{corollary}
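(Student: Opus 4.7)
The plan is to prove \eqref{kps} by splitting on the sign of $\ell := \beta_1+\beta_2-\tfrac{d}{2}$, noting first that $\|f_{\rm abs}\|_{H^\sigma}=\|f\|_{H^\sigma}$ for every $\sigma\in\R$ since the Fourier coefficients of $f_{\rm abs}$ have the same moduli as those of $f$ (and similarly for $g$). This lets us move freely between $f$ and $f_{\rm abs}$ on the Sobolev side.

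When $\beta_1+\beta_2<\tfrac{d}{2}$, so that $\ell\in(-\tfrac{d}{2},0)$, Lemma \ref{kpl} does not apply directly (its high-low decomposition requires $|\cn|^{\ell}\lesssim |\ck|^{\ell+a}|\cm|^{-a}$, which fails for $\ell<0$ when $|\cn|\ll|\ck|\simeq|\cm|$). Instead I would proceed by Sobolev embedding: dualising $H^{-\ell}(\mathbb{T}^d)\hookrightarrow L^{2d/(d+2\ell)}(\mathbb{T}^d)$, which is valid since $-\ell\in(0,\tfrac{d}{2})$, yields the inclusion $L^{p}(\mathbb{T}^d)\hookrightarrow H^{\ell}(\mathbb{T}^d)$ with $p:=\tfrac{2d}{d-2\ell}\in(1,2)$. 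Setting $p_j:=\tfrac{2d}{d-2\beta_j}\in[2,\infty)$ for $j=1,2$, a direct calculation gives $\tfrac{1}{p_1}+\tfrac{1}{p_2}=\tfrac{1}{p}$, so H\"older's inequality combined with the embeddings $H^{\beta_j}\hookrightarrow L^{p_j}$ produces
\[
\|fg\|_{H^\ell}\;\lesssim\;\|fg\|_{L^{p}}\;\leq\;\|f\|_{L^{p_1}}\|g\|_{L^{p_2}}\;\lesssim\;\|f\|_{H^{\beta_1}}\|g\|_{H^{\beta_2}}.
\]

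When $\beta_1+\beta_2\geq\tfrac{d}{2}$, so that $\ell\in[0,\tfrac{d}{2})$, I would invoke Lemma \ref{kpl}. Pick any $s\in(0,\tfrac{d}{2})$ with $s\le\tfrac{d}{2}-\beta_2$ (possible since $\beta_2<\tfrac{d}{2}$), and set
\[
a:=\tfrac{d}{2}-s-\beta_2\ge0,\qquad q:=\tfrac{2d}{d-2s},\qquad r:=\tfrac{d}{s},
\]
so that $q,r\in[2,\infty)$ and $\tfrac{1}{q}+\tfrac{1}{r}=\tfrac{1}{2}$. The identities $\ell+a+s=\beta_1$ and $-a+\tfrac{d}{2}-s=\beta_2$ together with the Sobolev embeddings $H^{s}\hookrightarrow L^{q}$ and $H^{d/2-s}\hookrightarrow L^{r}$ yield
\[
\|\pD^{\ell+a}f_{\rm abs}\|_{L^{q}}\;\lesssim\;\|\pD^{\ell+a+s}f_{\rm abs}\|_{L^{2}}=\|f\|_{H^{\beta_1}},\qquad
\|\pD^{-a}g_{\rm abs}\|_{L^{r}}\;\lesssim\;\|g\|_{H^{\beta_2}}.
\]
Symmetric parameters $s'\in(0,\tfrac{d}{2})$ with $s'\le\tfrac{d}{2}-\beta_1$, $b:=\tfrac{d}{2}-s'-\beta_1$, $q':=\tfrac{d}{s'}$, $r':=\tfrac{2d}{d-2s'}$ handle the second term of \eqref{kp} identically, and substituting both bounds into \eqref{kp} closes this case.

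The only real care needed is to steer clear of the endpoint embeddings that fail: $s=\tfrac{d}{2}$ or $-\ell=\tfrac{d}{2}$, both of which would force $H^{d/2}\hookrightarrow L^\infty$. The strict inequalities $\beta_j<\tfrac{d}{2}$ (used in both cases) and $\beta_1+\beta_2>0$ (used in Case I to keep $\ell>-\tfrac{d}{2}$) leave strictly positive room for the auxiliary exponents $s,s'$ inside the open range of validity, so the implied constant depends only on $\beta_1,\beta_2,d$ as claimed. The main obstacle is really just the bookkeeping in Case II: verifying that the choice $a=\tfrac{d}{2}-s-\beta_2$ simultaneously satisfies $a\ge 0$, $\ell+a+s=\beta_1$ and $-a+\tfrac{d}{2}-s=\beta_2$, which pins down the parameters uniquely once $s$ is fixed.
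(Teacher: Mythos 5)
Your proof is correct and, in fact, more careful than the paper's. The paper's own proof applies Lemma \ref{kpl} with $a=b=0$ across the whole range of $\beta_1+\beta_2$, but that lemma is stated and proved only for $\ell\geq 0$: its high-low decomposition hinges on the inequality $|\cn|^{\ell}\lesssim \max\{|\ck|,|\cm|\}^{\ell}$, which fails when $\ell<0$ (take $|\ck|\simeq|\cm|$ large with $n=-k-m$ of order one). You correctly identify this obstruction and supply an independent argument in the regime $\ell\in(-\tfrac d2,0)$, replacing the bilinear decomposition by the dual Sobolev embedding $L^{2d/(d-2\ell)}\hookrightarrow H^{\ell}$ together with H\"older and $H^{\beta_j}\hookrightarrow L^{2d/(d-2\beta_j)}$; that closes the case cleanly. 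In the complementary regime $\ell\geq 0$, note that $\beta_1,\beta_2>0$ automatically (else $\beta_1+\beta_2<\tfrac d2$), and your parameterised family specialises at $s=\tfrac d2-\beta_2$, $s'=\tfrac d2-\beta_1$ to exactly the paper's choice $a=b=0$, $q=d/\beta_2$, $r=2d/(d-2\beta_2)$, $q'=d/\beta_1$, $r'=2d/(d-2\beta_1)$, so there your argument agrees with theirs (and your freedom to shrink $s$ gives a bit of extra slack away from the critical embedding). The upshot is that your two-case proof actually covers the full stated hypotheses $\beta_1,\beta_2\in[0,\tfrac d2)$, $\beta_1+\beta_2>0$, whereas the paper's one-line invocation of Lemma \ref{kpl} leaves a genuine gap when $\beta_1+\beta_2<\tfrac d2$.
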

\begin{proof}
We use \eqref{kp} with $q=\frac{d}{\beta_2}$, $r=\frac{2d}{d-2\beta_2}$, $q'=\frac{d}{\beta_1}$, $r'=\frac{2d}{d-2\beta_1}$, and $a=b=0$. Then, we have
\begin{align*}
\|f g \|_{H^{\beta_1+\beta_2-\frac d 2}} &\lesssim  \|\pD^{\beta_1+\beta_2-\frac d 2} f \|_{L^{\frac{d}{\beta_2}}}\|g \|_{L^\frac{2d}{d-2\beta_2}} + \|f \|_{L^{\frac{d}{\beta_1}}}\|\pD^{\beta_1+\beta_2-\frac d 2} g \|_{L^{\frac{2d}{d-2\beta_1}}}.
\end{align*}
The result follows from Sobolev embedding, since the Sobolev norms of $f,g$ are equal to those of $f_{abs},g_{abs}$ respectively.
\end{proof}
Finally, we recall the following trilinear estimate.
\begin{lemma}
Let $f,g,h:\T \to \R$ be zero-mean functions of sufficient regularity and $\beta\in(0,\frac 3 2).$ Then the following estimate holds true
\begin{equation}\label{kps1}
\skm |f_k|\,|g_m|\,|h_n| \lesssim
  \|f\|_{H^ {\frac32-\beta}}  \|g\|_{H^\beta} \|h\|_{L^2}.
\end{equation}
\end{lemma}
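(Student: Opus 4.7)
My plan is to reduce the trilinear sum to a bilinear $L^2$ product estimate and then invoke the Kato--Ponce type bound established in Corollary \ref{kpsc}. First, I would introduce the auxiliary ``absolutised'' functions
\[
F(x):=\sum_{k\in\Z^3\setminus\{\vec 0\}} |f_k|\,e^{\ri \ck\cdot x},\qquad G(x):=\sum_{m\in\Z^3\setminus\{\vec 0\}} |g_m|\,e^{\ri \cm\cdot x},
\]
whose Sobolev norms coincide with those of $f$ and $g$ respectively. Because of the convolution constraint $k+m+n=\vec 0$, the trilinear sum rewrites as a pairing between $|h|$ and the Fourier coefficients of the product $FG$, namely
\[
\skm |f_k|\,|g_m|\,|h_n| \;=\; \sum_{n\in\Z^3\setminus\{\vec 0\}} |h_n|\,\widehat{FG}(-n).
\]

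Next, I would apply Cauchy--Schwarz in the index $n$ to peel off the $L^2$ norm of $h$:
\[
\sum_{n} |h_n|\,\widehat{FG}(-n) \;\le\; \Bigl(\sum_n |h_n|^2\Bigr)^{1/2}\Bigl(\sum_n |\widehat{FG}(-n)|^2\Bigr)^{1/2} \;\lesssim_{\T}\; \|h\|_{L^2}\,\|FG\|_{L^2},
\]
where the last step uses Parseval's identity. It then remains to estimate $\|FG\|_{L^2}$.

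For the bilinear estimate I would invoke Corollary \ref{kpsc} with $d=3$, taking $\beta_1=\tfrac32-\beta$ and $\beta_2=\beta$. The hypothesis $\beta\in(0,\tfrac32)$ is precisely what guarantees that both exponents lie in $[0,\tfrac d2)=[0,\tfrac32)$ with $\beta_1+\beta_2>0$, and the index on the left becomes $\beta_1+\beta_2-\tfrac d2=0$. This yields
\[
\|FG\|_{L^2}=\|FG\|_{H^0}\;\lesssim\; \|F\|_{H^{\frac32-\beta}}\,\|G\|_{H^{\beta}}\;=\;\|f\|_{H^{\frac32-\beta}}\,\|g\|_{H^\beta},
\]
where the last identity uses that Sobolev norms are insensitive to the moduli of Fourier coefficients. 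Combining with the previous display gives \eqref{kps1}.

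I do not anticipate any serious obstacle: the proof is essentially bookkeeping on the Fourier side, and the only delicate point is checking that the endpoint parameters in Corollary \ref{kpsc} are admissible, which follows directly from $\beta\in(0,\tfrac32)$. The zero-mean assumption is used to ensure that all the relevant homogeneous Sobolev norms are well-defined and equivalent to their inhomogeneous counterparts on the torus.
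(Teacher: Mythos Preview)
Your proof is correct and follows essentially the same approach as the paper: apply Cauchy--Schwarz in $n$ to peel off $\|h\|_{L^2}$, then invoke Corollary~\ref{kpsc} with $\beta_1=\tfrac32-\beta$, $\beta_2=\beta$ (so that $\beta_1+\beta_2-\tfrac d2=0$) on the absolutised product $FG$. Your write-up is simply a more explicit unpacking of the paper's two-line argument.
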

\begin{proof}
The Cauchy-Schwarz inequality implies \[
\left(\skm |f_k|\,|g_m|\,|h_n| \right)^2\leq  \skm\left( |f_k|\,|g_m|\right)^2\|h\|_{L^2}^2
\]
and the result follows from Corollary \ref{kpsc}.
\end{proof}
\section{Lower Bounds}\label{appendixlb}
 The following simple lemma verifies the sharpness of the results of Corollary \ref{mixedvolumec}.  We assume $\sL_1=\sL_2=1$, as the general case is done similarly. 

\begin{lemma}
  Consider $\eta\ne1$ and $\delta^{*} \in \left(0,\min \{\frac \eta 2 , \frac 1 2\}\right)$.  Then, for any sufficiently large number $M$, the cardinality  of the set of wavevectors 
\[
\left\{k \in \Z^3:  M \le |k|<2M, |\omega_k - 1  | \leq \delta^{*}\right\}
\]
is bounded from below by $\frac{C\sqrt{\delta^{*}}}{\sqrt{|\eta^2 - 1|}} M^3 + C M^2$ for an absolute constant $C>0.$
\end{lemma}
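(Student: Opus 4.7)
The plan is to translate the near-resonance condition into an explicit geometric constraint on $k$, and then count lattice points in the resulting region by a volume/slice argument. First, I would note that $\omega_k^2-1=(\eta^2-1)c_k^2$ with $c_k=k_3/|k|$, so
\[
|\omega_k-1|\,(\omega_k+1)=|\eta^2-1|\,c_k^2.
\]
When $\eta>1$ we have $\omega_k\ge1$, so $\omega_k+1\in[2,1+\eta]$; when $\eta<1$ we have $\omega_k\le1$, so $\omega_k+1\in[1+\eta,2]$. In either case the condition $|\omega_k-1|\le\delta^{*}$ is equivalent, up to constants depending only on $\eta$, to $|k_3|/|k|\le\alpha$ with $\alpha:=C_0\sqrt{\delta^{*}/|\eta^2-1|}$ for some explicit $C_0>0$. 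Since $\delta^{*}<\min\{\eta/2,1/2\}$, the parameter $\alpha$ is bounded by an absolute constant, which will be convenient below.

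Second, I would split the required lower bound into two pieces. For the $CM^2$ contribution, I use the slice $k_3=0$: then $c_k=0$ and $\omega_k=1$ automatically, so every horizontal lattice point with $M\le|k_H|<2M$ satisfies the near-resonance constraint. By the standard two-dimensional Gauss circle estimate, this annulus contains at least $cM^2$ such lattice points once $M$ is large. For the $\alpha M^3$ contribution I work in the regime $\alpha M\ge 1$; otherwise $\alpha M^3<M^2$ and the $M^2$ term already dominates. For each $k_H\in\Z^2$ with $M\le|k_H|\le 3M/2$ (of which there are at least $cM^2$), the condition $|k_3|\le\alpha|k|=\alpha\sqrt{|k_H|^2+k_3^2}$ rearranges to $|k_3|\le\alpha|k_H|/\sqrt{1-\alpha^2}$, while the annular constraint $M^2\le|k_H|^2+k_3^2<4M^2$ is automatically compatible for $|k_3|\le\alpha|k_H|/\sqrt{1-\alpha^2}\le 2\alpha M$ (using $\alpha\le 1/2$ and $|k_H|\le 3M/2$, plus $|k_H|\ge M$ for the lower annular bound). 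Hence the admissible integers $k_3$ fill an interval of length at least $2\alpha M\ge 2$, contributing at least $\lfloor\alpha M\rfloor\ge\tfrac12\alpha M$ values per $k_H$. Summing over the $\gtrsim M^2$ choices of $k_H$ gives at least $c\,\alpha M^3=c\sqrt{\delta^{*}/|\eta^2-1|}\,M^3$ lattice points with $k_3\ne 0$, which combined with the slice $k_3=0$ yields the claimed lower bound.

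The hard part is really just bookkeeping: verifying that the small quadratic perturbation $\alpha^2$ in $|k_3|\le\alpha|k_H|/\sqrt{1-\alpha^2}$ does not degrade the interval length to below $2$, and that the two regimes $\alpha M\ge 1$ and $\alpha M<1$ are patched together cleanly so that the additive form $CM^2+C\alpha M^3$ holds in both. Both are elementary given that $\alpha$ is bounded away from $1$ (using $\delta^*<\min\{\eta/2,1/2\}$ together with $|\eta^2-1|$ being fixed and nonzero), and that $M$ may be chosen as large as needed. Finally, the trailing assertion of the lemma is automatic: if $n\in\Z^3\setminus\{\vec0\}$ has $n_3=0$ then $\omega_n=1$ identically, so $|\omega_k-\omega_n|=|\omega_k-1|\le\delta^{*}$ for every $k$ in the set just constructed.
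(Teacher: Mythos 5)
Your proof is correct in substance and proceeds by the same overall strategy the paper uses: translate $|\omega_k-1|\le\delta^*$ into a linear constraint on $|k_3|$ in terms of $|k_H|$ with slope comparable to $\sqrt{\delta^*/|\eta^2-1|}$, then count lattice points column by column in $k_3$ and sum over $\gtrsim M^2$ choices of $k_H$. The differences are in the bookkeeping: the paper fixes a rectangular box $k_1\in[\tfrac35M,M]$, $k_2\in[\tfrac45M,M]$ and allows $0\le k_3\le c_{\delta^*}|k_H|$, getting both the $M^2$ and the $\sqrt{\delta^*}M^3/\sqrt{|\eta^2-1|}$ contributions from the single estimate $1+\lfloor c_{\delta^*}|k_H|\rfloor\ge\tfrac12(1+c_{\delta^*}|k_H|)$; you instead take an annular $k_H$-region, treat the $k_3=0$ slice and the $k_3\ne0$ columns separately, and patch the regimes $\alpha M\ge1$ and $\alpha M<1$. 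Both are valid; the paper's version is a touch more streamlined, while yours makes the source of the two terms in the lower bound more explicit.

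One caveat worth flagging: the sentence claiming that $\alpha$ is ``bounded by an absolute constant'' from $\delta^*<\min\{\eta/2,1/2\}$ alone is not correct. Since $\alpha\asymp\sqrt{\delta^*/|\eta^2-1|}$, it can be arbitrarily large as $\eta\to1$ with $\delta^*$ close to its upper bound, and for a fixed $\eta$ moderately close to $1$ it can easily exceed $1/2$. You in fact use $\alpha\le1/2$ to keep the $k_3$-columns inside the annulus $M\le|k|<2M$ and to dispose of the $\sqrt{1-\alpha^2}$ correction, so this is a genuine hypothesis, not a freebie. The paper's proof has the analogous implicit requirement that $c_{\delta^*}<1$ in order for its box to lie in $\{M\le|k|<2M\}$, so you are not worse off; but the cleaner thing to say is that the estimate is of interest precisely in the regime where the $|c_k|$-threshold is less than $1$ (otherwise the near-resonance constraint is vacuous and the lemma's stated lower bound with a fixed absolute constant cannot hold), and then $\alpha<1$, say $\alpha\le1/2$, follows. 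Your later parenthetical acknowledging the $|\eta^2-1|$-dependence is the right instinct; the earlier blanket ``absolute constant'' statement should be removed or weakened.
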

Apparently, for any nonzero wavevector $n$ with $n_3=0$, any member of the above set satisfies $|\omega_k - \omega_n | \leq \delta^{*}$.

\begin{proof}
 
Let \begin{equation*} c_{\delta^{*}}= \begin{cases}
 \frac {\sqrt{-(\delta^{*})^2 + 2 \delta^{*}}} {\sqrt{1- \eta^2 }},\quad&\text{if}\quad\eta<1\\
 \frac {\sqrt{(\delta^{*})^2 + 2 \delta^{*}}} {\sqrt{\eta^2 - 1}},\quad&\text{if}\quad\eta>1.
 \end{cases}
 \end{equation*} Consider the family of $k$ defined via
\[
\left\{k
=(k_1,k_2,k_3)\in\Z^{3}: \frac35 M\le k_1\le M,\,\frac45 M \le k_2 \le M,\,0\le k_3\le c_{\delta^{*}}|k_H|\right\}.
\]
 Clearly,  $M\le |k|<2M$ and   $\frac {|k_3|}{|k|}<c_{\delta^{*}}$ the latter of which is equivalent to $|\omega_k - 1|\le\delta^{*}$.

For given $k_H \in \Z^2$, the possible integer choices for $k_3$ are $1 +\big\lfloor  c_{\delta^{*}}  |k_H| \big\rfloor> \frac 1 2\left( 1 + c_{\delta^{*}}|k_H|\right).$  We sum over $k_1, k_2 \in \left[\frac 35 M,M\right]\times  \left[\frac 45 M, M\right] $ in order to obtain a lower bound for the required cardinality
\[
\sum_{k_1,k_2}  \left(1+ c_{\delta^{*}}\right)  |k_H|  \gtrsim    \int_{\frac 45M}^M \int_{\frac 35M}^M   {\left(1+c_{\delta^{*}}\right)}  |k_H|  \rd k_1 \rd k_2 > \frac{C\sqrt{\delta^{*}}}{\sqrt{|\eta^2 - 1|}}M^3 + CM^2.
\]
\end{proof}

\section{Interaction coefficients}\label{appendixs}
 We provide the following  on the form of FFS interaction coefficients.
\begin{lemma}\label{scof}
Let $ (k,m,n) \in \Z^9$.   The interaction coefficients  $S_{kmn}^{+-0}$ defined in \eqref{slowcoeff} for FFS interactions  are given as follows. 
\begin{itemize}
\item[(i)] If $\ck_H\ne\vec0$ and $\cm_H\ne\vec0$ then
\begin{align*}
 S_{kmn}^{+-0}&= \ri \left[  \omega_m^2 - \omega_k^2  \right]\dfrac{ \left[   |\ck|^2 |\cm|^2 (1-\eta^2)^{-1}   (\ck_H \times \cm_H)  (\eta^2 - \omega_k\omega_m)    \right]}{2|\ck_\eta| |\cm_\eta| |\cn_\eta|  |\ck_H| |\cm_H|}\\
&+  \left[ (\omega_m - \omega_k)  \right]\dfrac{\eta  \left[ (k_3^2 |\cm_H|^2  + m_3^2 |\ck_H|^2) ( \cm_H \cdot \ck_H) -2  k_3 m_3 |\ck_H|^2 |\cm_H|^2  \right]}{2|\ck_\eta| |\cm_\eta| |\cn_\eta|  |\ck_H| |\cm_H|}.
\end{align*}
\item[(ii)]  If $\ck_H=\vec0$ and $\cm_H\ne\vec0$ then
\[
 S_{kmn}^{+-0}=   \ri \left[  \omega_m^2 - \omega_k^2  \right]\dfrac{   k_3 |\cm|^2 (1-\eta^2)^{-1} (\eta - \omega_m)   (\ri \cm_1 + \cm_2) }{2 |\cm_\eta| |\cn_\eta|   |\cm_H|}.
\]
  The case for $\ck_H\ne\vec0$ and $\cm_H=\vec0$ is similar due to the $k,m$ symmetry of $\ind_{\cN_{FFS}}(k,m,n).$
\item[(iii)]
 If $\ck_H=\cm_H=\vec0$ then
$S_{kmn}^{+-0}=0$.
\end{itemize}
\end{lemma}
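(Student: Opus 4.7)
The plan is to prove all three cases by direct computation, substituting the explicit eigenvector formulas from \eqref{defeigenv}--\eqref{defeigenvs} into the definition \eqref{slowcoeff} of $S_{kmn}^{+-0}$ and simplifying under the convolution constraint $k+m+n=\vec 0$.

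Case (iii) is immediate: if $\ck_H=\cm_H=\vec 0$ then $\cn_H=\vec 0$ by the convolution relation, the vectors $r_k^\pm$ and $r_m^\pm$ lie in the horizontal coordinate plane of $\C^4$, the vector $r_n^0$ equals $(0,0,0,1)^\intercal$, and both $\ck',\cm'$ are of the form $(0,0,*,0)$. Each of the four dot products appearing in \eqref{slowcoeff} thus vanishes, so $S_{kmn}^{+-0}=0$.

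Case (ii), where $\ck_H=\vec 0$ (hence $\omega_k=\eta$) and $\cm_H\neq\vec 0$, is a short computation. The factor $r_k^+\cdot\cm'=\frac{1}{\sqrt{2}}(\ri\cm_1+\cm_2)$ comes directly from \eqref{defeigenvs}, while the other three factors are evaluated from the general formulas using $\cn_H=-\cm_H$. Combining these four factors produces an expression that visibly factors as $(\eta-\omega_m)=(\omega_k-\omega_m)$ times a rational in the wavevector components; the desired $[\omega_m^2-\omega_k^2]$-multiple form then follows from the dispersion identity $\omega_m^2-\eta^2=(1-\eta^2)|\cm_H|^2/|\cm|^2$.

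The substantive case is (i), where both $\ck_H,\cm_H\neq\vec 0$. The natural approach is to split each of the four dot-product factors as $A_i=X_i+\ri Y_i$ with $X_i,Y_i$ explicit polynomials in $k_3,m_3,\ck_H,\cm_H$ (the $Y_i$ carrying either an $\omega_k$ or an $\omega_m$ factor), and then expand $A_1 A_2+A_3 A_4$ into a sum of $X X$, $Y Y$, and $X Y$ terms. Two algebraic identities drive the collapse into the advertised factored form: the $\omega$-squared identity
\[
k_3^2|\cm_H|^2-m_3^2|\ck_H|^2=\frac{|\ck|^2|\cm|^2(\omega_k^2-\omega_m^2)}{\eta^2-1},
\]
which I obtain from $|\ck_H|^2=(\eta^2-\omega_k^2)|\ck|^2/(\eta^2-1)$ and $k_3^2=(\omega_k^2-1)|\ck|^2/(\eta^2-1)$, together with the planar Lagrange identity $|\ck_H|^2|\cm_H|^2=(\ck_H\cdot\cm_H)^2+(\ck_H\times\cm_H)^2$, combined with convolution consequences such as $\ck_H\cdot\cn_H=-|\ck_H|^2-\ck_H\cdot\cm_H$ and $\cm_H\times\cn_H=\ck_H\times\cm_H$.

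The main obstacle is the algebraic bookkeeping in case (i). The $(\omega_m^2-\omega_k^2)$-multiple piece emerges cleanly from the $X X$ and $Y Y$ combinations, which factor as $(\ck_H\times\cm_H)(k_3^2|\cm_H|^2-m_3^2|\ck_H|^2)(\eta^2-\omega_k\omega_m)$ and then simplify via the $\omega$-squared identity. The $(\omega_m-\omega_k)$-multiple piece is harder: the initial $X Y$ expansion contributes both an $(\omega_k+\omega_m)$ term weighted by $(\ck_H\times\cm_H)^2$ and an $(\omega_k-\omega_m)$ term carrying the factor $(k_3\ck_H\cdot\cm_H-m_3|\ck_H|^2)(k_3|\cm_H|^2-m_3\ck_H\cdot\cm_H)$. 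One must use the Lagrange identity to rewrite $(\ck_H\times\cm_H)^2$ as $|\ck_H|^2|\cm_H|^2-(\ck_H\cdot\cm_H)^2$, after which the $(\omega_k+\omega_m)$ contribution partially cancels the $(\omega_k-\omega_m)$ expansion, leaving a single $(\omega_m-\omega_k)$ multiple of the stated polynomial $(k_3^2|\cm_H|^2+m_3^2|\ck_H|^2)(\ck_H\cdot\cm_H)-2k_3 m_3|\ck_H|^2|\cm_H|^2$.
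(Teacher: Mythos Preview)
Your proposal is correct and follows essentially the same route as the paper: direct expansion of the two summands in \eqref{slowcoeff} using the explicit eigenvectors, followed by collapse via the dispersion identity $k_3^2|\cm_H|^2-m_3^2|\ck_H|^2=|\ck|^2|\cm|^2(\omega_k^2-\omega_m^2)/(\eta^2-1)$ and the planar Lagrange identity. The only organisational difference is that the paper first uses Lemma~\ref{properties}(2) together with the orthogonality $\overline{\alpha_m}\cdot e_m^0=0$ to replace $e_n^0$ by $-e_k^0$ (and symmetrically $e_n^0$ by $-e_m^0$ in the second summand) before expanding, which removes $n$ from the computation at the outset and slightly shortens the bookkeeping you describe via the convolution substitutions $\cn_H=-\ck_H-\cm_H$.
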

\begin{proof} We will use the eigenvectors given in \eqref{defeigenvs1} and \eqref{defeigenvs}.

\noindent \underline{Case (i).}

We proceed in estimating the two summands in the interaction coefficients separately.
First, using  Lemma \ref{properties} and the incompressibility of $r^0_m$, we have
\begin{align*}
&2|\ck_\eta| |\cm_\eta| |\cn_\eta|  |\ck_H| |\cm_H| \cip{\rfp}{\cm'} \cip{\rfmm }{r_n^0 } = - \cip{\alpha_k}{\cm' } \cip{\overline \alpha_m }{e_k^0 } \\
& =  \left[  k_3 \left( \eta  \cm_H \times \ck_H  - \ri  \omega_k + (\ck_H \cdot \cm_H)  \right)  + \ri  \omega_k |\ck_H|^2 m_3 \right]  \left[  m_3 \left(   \ri  \omega_m(\cm_H \times \ck_H) + \eta(\cm_H \cdot \ck_H) \right)  -  \eta k_3 |\cm_H|^2   \right] \\
& =  A_1 + A_2 +  A_3 + A_4,
\end{align*} 
where
\begin{itemize}
\item $A_1 =k_3  m_3  (\ck_H \cdot \cm_H )  \left[ \eta^2\cm_H \times \ck_H +  \omega_k\omega_m   \right]$
\item $ A_2 = \ri \eta \omega_k (\ck_H \cdot \cm_H)  \left[k_3^2 |\cm_H|^2 +\eta m_3 ^2 |\ck_H|^2  \right]$
\item $A_3 = -  (\cm_H \times \ck_H) \left[  \eta^2 k_3^2 |\cm_H|^2  + \omega_k \omega_m m_3^2 |\ck_H|^2  \right]  $  
\item $ A_4 = \ri   \eta  k_3 m_3 \left[ \omega_m |\ck_H \times \cm_H|^2 - \omega_k ( |\ck_H \cdot \cm_H|^2 + |\ck_H|^2 |\cm_H|^2)  \right].$
\end{itemize}

As far as the second part of the sum is concerned, we have:
\begin{align*}
& 2|\ck_\eta| |\cm_\eta| |\cn_\eta|  |\ck_H| |\cm_H|\cip{\rfmm}{\ck' } \cip{\rfp}{r_n^0 }  = - \cip{\overline\alpha_m}{\ck' } \cip{\alpha_k}{e_m^0 } \\
& = \left[  m_3 \left( \eta \ck_H \times \cm_H  + \ri  \omega_m (\ck_H \cdot \cm_H)  \right)  - \ri  \omega_m |\cm_H|^2 k_3 \right]  \left[  k_3 \left(   \ri  \omega_k (\cm_H \times \ck_H) + \eta(\cm_H \cdot \ck_H) \right)  - \eta m_3 |\ck_H|^2   \right]  \\
& =   B_1+B_2 + B_3 +B_4,
\end{align*} 
with
\begin{itemize}
\item $B_1 =k_3  m_3  (\ck_H \cdot \cm_H )  \left[ \eta^2\ck_H \times \cm_H -  \omega_k \omega_m   \right]$
\item $ B_2 =- \ri \eta \omega_m (\ck_H \cdot \cm_H)  \left[\eta k_3^2 |\cm_H|^2 + m_3 ^2 |\ck_H|^2  \right]$
\item $B_3 =  (\cm_H \times \ck_H) \left[  \eta^2 m_3^2 |\ck_H|^2  + \omega_k\omega_m k_3^2 |\cm_H|^2  \right]  $  
\item $ B_4 = \ri   \eta  k_3 m_3 \left[  \omega_k |\ck_H \times \cm_H|^2 +\omega_m ( |\ck_H \cdot \cm_H|^2 + |\ck_H|^2 |\cm_H|^2)  \right].$
\end{itemize}
The symmetry between $k,m$ then implies that:
\[
A_3 + B_3 =(\eta^2 -\omega_k\omega_m) (\ck_H \times \cm_H)  \left[  k_3^2 |\cm_H|^2     -  m_3^2 |\ck_H|^2  \right].
\]
Following \cite{AOW}, we observe that the definition of $\omega$ allows us to rewrite
\[
\omega_k^2 - \omega_m^2 =(1-\eta^2) \dfrac {|\ck_H|^2 |\cm|^2 - |\cm_H|^2 |\ck|^2}{|\ck|^2 |\cm|^2}    =(1-\eta^2) \dfrac {|\ck_H|^2 m_3^2 - |\cm_H|^2 k_3^2}{|\ck|^2 |\cm|^2}.
\]
Thus,  we obtain:
\[
 A_3 + B_3  = \frac {|\ck|^2 |\cm|^2} {(1-\eta^2)}   (\ck_H \times \cm_H)  (\eta^2 - \omega_k\omega_m) \left[  \omega_m^2 - \omega_k^2  \right].
\]
Finally,  the cancellation between $A_1, B_1$, together with the sums of the remaining terms 
\begin{equation*}
A_2 + B_2 = - \ri \eta (\ck_H \cdot \cm_H)(m_3 |\ck_H|^2 + k_3^2 |\cm_H|^2)(\omega_m - \omega_k)
\end{equation*}
and
\begin{equation*}
A_4 + B_4 = 2 \ri \eta k_3 m_3  |\ck_H|^2|\cm_H|^2 (\omega_m - \omega_k),
\end{equation*}
 yield the result.
 
 \noindent \underline{Case (ii).}

We have
\begin{align*}
2 |\cm_\eta| |\cn_\eta|   |\cm_H| \cip{r^+_k}{m'} \cip{\rfmm }{r_n^0 }&=  \cip{ \alpha_k}{m' } \cip{\overline \alpha_m }{e_n^0 } =  \eta (\ri \cm_1 + \cm_2)k_3 |\cm_H|^2
\end{align*}
and
\begin{align*}
2 |\cm_\eta| |\cn_\eta|   |\cm_H| \cip{\rfmm}{k'} \cip{ r^+_k }{r_n^0 }&=  -\cip{\overline\alpha_m}{k' } \cip{  \alpha_k }{e_m^0 } = (\ri \cm_1 + \cm_2)k_3 |\cm_H|^2 \omega_m,
\end{align*}
 after using Lemma \ref{properties}.
The result then follows, since  the wavevectors under consideration satisfy
\[
\omega_m^2 - \omega_k^2 = \dfrac {(1-\eta^2)|\cm_H|^2}{|\cm|^2}.
\]

\noindent \underline{Case (iii).} Trivial.

\end{proof}

{ \section*{\bf Acknowledgements}

Cheng and Sakellaris are supported by the Leverhulme Trust (Award No.\,RPG-2017-098). Cheng is supported by the EPSRC (Grant No.\,EP/R029628/1).  The authors also thank Beth Wingate for insightful discussions and valuable feedback.
}
\bibliography{Manuscript}
\bibliographystyle{plain}
\end{document}